\documentclass[11pt]{article}
\usepackage{amsmath,amssymb,amsthm,color,comment,geometry,graphicx,hyperref}
\usepackage[T1]{fontenc}
\usepackage[utf8]{inputenc}
\usepackage{authblk}
\usepackage{breqn}
\usepackage[title,titletoc,toc]{appendix}
\usepackage[utf8]{inputenc}

\newtheorem{theorem}{Theorem}[section]
\newtheorem{lemma}[theorem]{Lemma}
\newtheorem{proposition}[theorem]{Proposition}
\newtheorem{corollary}[theorem]{Corollary}
\theoremstyle{definition}
\newtheorem{definition}{Definition}[section]
\theoremstyle{remark}
\newtheorem{remark}{Remark}[section]

\newcommand{\Tr}{{\rm Tr}}
\newcommand{\tr}{{\rm tr}}
\newcommand{\ricden}{{\bf Ric}}
\newcommand{\ricfun}{\mathcal{R}ic}
\newcommand{\ric}{{\rm Ric}}
\newcommand{\scalar}{R}
\newcommand{\scalarfun}{\mathcal{R}}
\newcommand{\res}{{\rm Res}}
\newcommand{\pr}{{ Q}}
\newcommand{\A}{C(\mathbb{T}_\theta^3)}
\newcommand{\Ai}{C^\infty(\mathbb{T}_\theta^3)}
\newcommand{\nctorus}{{\mathbb{T}}^3_\theta}

\newcommand{\End}{{\rm End}}
\newcommand{\lap}{\triangle}

\title{The Ricci Curvature for  Noncommutative Three Tori}

\author{Rui Dong, Asghar Ghorbanpour, Masoud Khalkhali}
\affil{Department of Mathematics, University of Western Ontario}

\newcommand{\Addresses}{{
  \bigskip
  \footnotesize

  Rui Dong, \textsc{Department of Mathematics, University of Western Ontario,
London, Ontario, Canada, N6A 5B7}\par\nopagebreak
  \textit{E-mail address}:\texttt{rdong7@uwo.ca}
  \medskip

  Asghar Ghorbanpour, \textsc{Department of Mathematics, University of Western Ontario,
London, Ontario, Canada, N6A 5B7}\par\nopagebreak
  \textit{E-mail address}:\texttt{aghorba@uwo.ca}

  \medskip

 Masoud Khalkhali, \textsc{Department of Mathematics, University of Western Ontario,
London, Ontario, Canada, N6A 5B7}\par\nopagebreak
  \textit{E-mail address}: \texttt{masoud@uwo.ca}

}}

\date{ }
\begin{document}
\maketitle

\allowdisplaybreaks

\begin{abstract}
     We compute the  Ricci  curvature of a  curved noncommutative  three torus. The computation is done  both for   conformal and  non-conformal perturbations of the flat metric. 
To perturb the flat metric,  the standard volume form on the noncommutative  three torus is 
 perturbed and the corresponding perturbed Laplacian is analyzed.
 Using Connes' pseudodifferential calculus for the noncommutative tori, we explicitly compute 
 the second   term of the  short  time heat kernel expansion for the perturbed Laplacians on functions and on 1-forms. The Ricci curvature is defined by localizing heat traces suitably. Equivalerntly, it  can be defined through  special values of localized spectral zeta functions. We also compute the scalar curvatures and compare our results with  previous calculations in the conformal case.  
Finally we compute the classical limit of our formulas and  show that  they  coincide with classical formulas in the commutative case.  
\end{abstract}

\tableofcontents{}

\section{Introduction}

The spectral geometry and study of local spectral invariants of curved noncommutative tori has been the subject of intensive studies in recent years.
In particular a Gauss-Bonnet theorem, the definition of scalar curvature,  and the computations of scalar curvature for noncommutative  two tori equipped with a curved metric has been achieved in \cite{Connes-Tretkoff2011,Fathizadeh-Khalkhali2013,Connes-Moscovici2014,Fathizadeh-Khalkhali2012}. 
Building on these results, 
computing the scalar curvature in other dimensions and settings is  carried  out in  \cite{Fathizadeh-Khalkhali2015,Lesch-Moscovici2016,Liu2015,Khalkhali-Motadelro-Sadeghi2016,Tanvir-Marcolli2012,Dbrowsli-Sitarz2015,Connes-Fathizadeh2016,Khalhali-Sitarz2018}.  
Beyond the scalar curvature, in \cite{Floricel-Ghorbanpour-Khalkhali2016} a definition of Ricci curvature in spectral terms is proposed and the Ricci density is computed for conformally flat  metrics on noncommutative two tori. 

In the present  work we shall compute the Ricci curvature  of noncommutative three tori for conformally flat  metrics as well as non-conformal   perturbations of the flat metric.  
Study of non-conformally flat  metrics  in three dimension is  justified since even in the commutative case  the class of conformally flat metrics on  a three dimensional manifold is much smaller than the class of all metrics.

At  the heart of our spectral formulation of the Ricci curvature  lies the  Weitzenb\"ock formula.
This formula measures how far the Laplacian on 1-forms is from the Bochner Laplacian of the Levi-Civita connection on the cotangent bundle.
It states \cite[Lemma 4.8.13]{Gilkey1995} that the difference is   the Ricci tensor lifted to an endomorphism of the cotangent bundle denoted by  $\ric$,  and called  the  Ricci operator in \cite{Floricel-Ghorbanpour-Khalkhali2016}.
More precisely, we have 
\begin{equation}\label{Weitzen}
\lap_1=\nabla^*\nabla +\ric.
\end{equation} 
This result combined with  Gilkey's formulas for the heat trace \cite{Gilkey1995} reveals immediately that a linear combination of the Ricci operator and the scalar curvature is the density of the second coefficient of the heat trace of the Laplacian on  1-forms. That is  
$$\Tr(e^{-t\lap_1})\sim a_0(\lap_1)t^{-m/2}+a_2(\lap_1)t^{1-m/2}+\cdots,\qquad t\to 0^+,$$
where 
$$a_2(\lap_1)=(4\pi)^{-m/2}\int_M \tr\big(\frac{1}{6}{\scalar}+\ric\big)\mathrm{dvol}_g,$$ 
and $\scalar$ denotes the scalar curvature.
These densities can be recovered by studying the localized heat trace $\Tr(Fe^{-t\lap_1})$, where $F$  is a  smooth endomorphisms of the cotangent bundle.
To isolate  the Ricci operator, the second density of the heat trace of the Laplacian on  functions $a_2(\lap_0)=(4\pi)^{-m/2}\frac{1}{6}{\scalar}$ enters the game where it is used to eliminate the scalar curvature present in $a_2(x,\lap_1)$.
Then   the Ricci functional,  as a functional on  the algebra of sections of the endomorphism bundle of the cotangent bundle of $M$,  is introduced as 
\begin{equation*}
\ricfun(F)=\lim_{t\to 0^+} t^{\frac{m}2-1} \left(\Tr(\tr(F)e^{-t\lap_0})-\Tr(Fe^{-t\lap_1})\right),\quad F\in C^\infty(\End(T^*M)).
\end{equation*}
If we denote the second density of the localized heat trace by  $a_2(\tr(F),\lap_0)$, the above formula can then be written as    
\begin{equation*}
\ricfun(F)=a_2(\tr(F),\lap_0)-a_2(F,\lap_1),\quad F\in C^\infty(\End(T^*M)).
\end{equation*}
An  equivalent version of the Ricci functional in terms of the spectral zeta function  can be  given by \cite{Floricel-Ghorbanpour-Khalkhali2016}
\begin{equation*}
\ricfun(F)=\begin{cases}
\zeta(0,\tr(F),\lap_0)-\zeta(0,F,\lap_1)+\Tr(\tr(F)\pr_0)-\Tr(FQ_1), & m=2\\ &\\ 
 \Gamma(\frac{m}{2}-1)\res_{s=\frac{m}{2}-1}\Big(\zeta(s,\tr(F),\lap_0)-\zeta(s,F,\lap_1)\Big), & m>2,
\end{cases}
\end{equation*}
where $\pr_j$ is the orthogonal projection on the kernel of $\lap_j$.

This paper is organized as follows. In Section 2, we  first recall the definition of the noncommutative Ricci curvature  from \cite{Floricel-Ghorbanpour-Khalkhali2016}.  To define the Ricci  functional  for the noncommutative three torus, it suffices to define the Laplacian on functions and on 1-forms. We also recall the rearrangement lemma and Connes' pseudodifferential calculus in this section. 
The analogue of the de Rham complex for the noncommutative three torus is discussed in Section \ref{derahm}.  For the analogues of  conformal $e^{-2h}(dx^2+dy^2+dz^2)$ and non-conformal  $e^{-2h}(dx^2+dy^2)+dz^2$ families of metrics,  the Laplacians  are computed in later sections. 
In Section \ref{conformalmetrics}, applying the pseudodifferential calculus,  the densities of the second terms are computed in the conformal case and  the scalar curvature and Ricci density are computed for these metrics in Proposition \ref{conformalalpha-1} and Theorems \ref{ncconformalscalarcurvature} and \ref{ncconformalRiccicurvature}. 
Finally in Section \ref{nonconformalmetrics} we first compute the scalar curvature of the  noncommutative three torus  equipped with a non-conformally flat metric.  We then compute the Ricci density for this class of metrics. It is interesting to note that  two of the functions that appear in the expression for scalar curvature, Theorem \ref{2nd_scalar_thm},  are  the same as functions that appear in the scalar curvature of the two 
dimensional curved noncommutative tori \cite{Connes-Moscovici2014,Fathizadeh-Khalkhali2013}. In Appendix A, we produce the steps that was used to compute the scalar curvature in the non-conformal case. In Appendix B,  we give the list of functions obtained from the rearrangement lemma that are used in our computations.

\section{Preliminaries}
In this section we shall fix notations and review preliminaries  required for the rest of the work. 
We will start with the definition of noncommutative three torus and then we construct the de Rham complex for it and discuss how one can define the Laplacians by fixing a metric  on the noncommutative torus. 
Finally, we recall the definition of the Ricci functional from \cite{Floricel-Ghorbanpour-Khalkhali2016} for noncommutative three tori. 
\subsection{Noncommutative three tori}
For a general introduction to topology and geometry of noncommutative tori the reader can consult  \cite{Connes1994}. Let $\theta = (\theta_{jk})\in M_{3}(\mathbb{R})$ be a skew-symmetric matrix. 
The noncommutative 3-torus $\A $ is the universal unital $C^\ast$-algebra generated by three unitary elements $u_1, u_2, u_3$ satisfying the relations:
\begin{equation*} 
u_{k}u_{j}=e^{2\pi i\theta_{jk}}u_{j}u_{k},\quad j, k=1, 2, 3.
\end{equation*}
We shall use both notations $\A$ and $\mathbb{T}^3_\theta$ to refer to the noncommutative space represented by the algebra $\A$.
For $\theta=0$, the $C^\ast$-algebra $\A$ is isomorphic to the algebra of  continuous functions on the 3-torus $\mathbb{T}^3=\mathbb{R}^3/\mathbb{Z}^3$.

There is an action of $\mathbb{T}^{3}$ on $\A$, which is given by the $3-$parameter group of automorphisms $\{\alpha_{z}\}$, such that
\begin{equation}\label{actionofTonA}
\alpha_{z}(u^{m})=z^{m}u^{m},
\end{equation}
where for $m=(m_{1}, m_{2}, m_{3})\in \mathbb{Z}^{3}$, we set $u^{m}=u_{1}^{m_{1}}u_{2}^{m_{2}}u_{3}^{m_{3}},$ and similarly, for $z=(z_{1}, z_{2}, z_{3})\in\mathbb{T}^{3}$, we denoted $z_{1}^{m_{1}}z_{2}^{m_{2}}z_{3}^{m_{3}}$ by $z^{m}$.
The set of all elements $a\in \A$ for which the map $z\mapsto \alpha_{z}(a)$ is smooth, form an involutive dense subalgebra of $\A$, which will be denoted by $\Ai$. 
Alternatively, $\Ai$ can be expressed as 
\begin{equation*}
\Ai = \Big\{\sum_{m\in \mathbb{Z}^{3}}a_{m}u^{m}:\{a_{m}\}_{m\in \mathbb{Z}^{3}} \text{ is rapidly decreasing}\Big\}.
\end{equation*}
By rapidly decreasing, we mean the Schwartz class condition that for all $k\in \mathbb{N}$,
\begin{equation*}
\sup_{m\in \mathbb{Z}^{3}}(1+|m|^{2})^{k}|a_{m}|^{2}<\infty.
\end{equation*}

There is a normalized faithful tracial state $\varphi$ on $\A$, determined by 
\begin{equation*}
\varphi(u^m)=0,  \quad \forall m\neq (0,0,0), \quad \text{and } \varphi(1)=1.
\end{equation*}
The tracial state $\varphi$ here plays the role of integration over $\nctorus$.
The algebra $\Ai$  possesses  three derivations, which are defined by the following relations:
\begin{equation*}
\delta_{j}(\sum_{m\in\mathbb{Z}^{3}}a_{m}u^{m})=\sum_{m\in\mathbb{Z}^{3}}m_{j}a_{m}u^{m}, \quad j = 1, 2, 3.
\end{equation*}
These derivations $\delta_j$ satisfy the relations
\begin{gather*}
(\delta_{j}(a))^{*}=-\delta_{j}(a^{*}),\\
\varphi(a\delta_{j}(b)) + \varphi(\delta_{j}(a)b)=0.
\end{gather*}

\subsection{De Rham complex for noncommutative three tori}\label{derahm}
We will first construct the space of differential forms on $\mathbb{T}^3_\theta$.
Let $W=\mathbb{C}^{3}$ and $\Lambda^\bullet W=\bigoplus_{j=0}^3 \Lambda^j W$ be the exterior algebra of $W$. 
The algebra 
\begin{equation*}
\Omega^\bullet\nctorus:=\Ai \otimes \Lambda^{\bullet}W,
\end{equation*}
will play the role of the algebra of complex differential forms of the noncommutative 3-torus.

We define the exterior derivative on functions, $d_{0}: \Omega^0\nctorus \to \Omega^1\nctorus $, by
\begin{equation*}
d_{0}(a)=(i\delta_{1}(a), i\delta_{2}(a), i\delta_{3}(a)), \quad \forall a\in\Ai.
\end{equation*}
Correspondingly, exterior derivative on 1-forms, $d_{1}: \Omega^1\nctorus \to \Omega^2\nctorus$,  and on 2-forms $d_{2}: \Omega^2\nctorus \to \Omega^3\nctorus$ are given by
\begin{equation*}
\begin{aligned}
d_1(a_{1}, a_{2}, a_{3})&=(i\delta_{1}(a_{2})-i\delta_{2}(a_{1}), i\delta_{2}(a_{3})-i\delta_{3}(a_{2}), i\delta_{1}(a_{3})-i\delta_{3}(a_{1})),\\
d_2(b_{1}, b_{2}, b_{3})&= i\delta_{1}(b_2)-  i\delta_{2}(b_3)+ i\delta_{3}(b_1).
\end{aligned}
\end{equation*}
It is not difficult to check that $d_{j+1}d_j=0$.
We define the de Rham complex of the noncommutative 3-torus to be the following complex
\begin{equation}\label{deRhamcomplex}
\Omega^0\nctorus\xrightarrow{d_0}\Omega^1\nctorus\xrightarrow{d_1} \Omega^2\nctorus\xrightarrow{d_2} \Omega^3\nctorus.
\end{equation}

In the commutative case, to define the Laplacian on forms, we need to fix a Riemannian metric first and  find the adjoint of the exterior derivatives, $d_j^\ast$ with respect to that metric. 
Then the Laplacian $\Delta_j$ on $j$-forms is defined as 
\begin{equation*}
\Delta_j=d_{j-1}d_{j-1}^*+d_j^*d_j.
\end{equation*} 
In the noncommutative case  we can study specific forms of metrics where the effect of the metric can be implemented through a volume form.
Then this helps us to define the adjoint of the exterior derivatives and similar to the classical case, one can define the Laplacian on $j$-forms.
These metrics include conformal perturbation of a flat metric, as it is studied in \cite{Connes-Tretkoff2011,Fathizadeh-Khalkhali2012,Connes-Moscovici2014,Fathizadeh-Khalkhali2013} for noncommutative two tori, and a new class of  non-conformally flat metrics in which only two directions are perturbed by a conformal factor.  
The geometry of conformally flat metrics on $\mathbb{T}^3_{\theta}$ will be studied in section \ref{conformalmetrics},  and  the geometry of non-conformally flat metrics  will be studied in section \ref{nonconformalmetrics}.

\subsection{The Ricci functional}\label{Riccifunctionalsection}
In a noncommutative setting, as a general rule, spectral methods must be employed to formulate metric invariants.
For example, in the noncommutative formulation of the Ricci curvature in \cite{Floricel-Ghorbanpour-Khalkhali2016}, instead of a tensorial algebraic definition, the spectral properties of the Laplacians are used to define and compute what is called  the Ricci density.
This formulation allows us to define this quantity for the noncommutative three torus. 
In this section, we quickly review the definitions and motivations for this new formulation.   

Suppose $M$ is an $m-$dimensional closed oriented Riemannian manifold. 
Let $V\rightarrow M$  be  a smooth Hermitian vector bundle over $M$ and $P: C^{\infty}(V)\rightarrow C^{\infty}(V)$ be a positive elliptic differential operator of order $d$. 
The heat operator $e^{-tP}$ is trace class for all positive values of $t$ and it has a short time  asymptotic expansion (cf. \cite{Gilkey1995})
\begin{equation*}
\Tr(e^{-tP})\sim \sum_{n=0}^{\infty}a_n(P)t^{\frac{n-m}{d}}, \qquad t\to 0^+.
\end{equation*} 
The coefficients  $a_n(P)$ are given by an integral formula 
\begin{equation}\label{integralformulaforheattracecoe}
a_n(P) = \int_M\tr(a_n(x, P))\mathrm{dvol}(x),
\end{equation}
where $\tr(a_n(x, P))$ is the fibrewise trace and $\mathrm{dvol}(x)=\sqrt{\mathrm{det}g}\, dx^1...dx^m$ is the Riemannian volume form of $M$.

To recover the densities $a_n(x,P)$, one needs to study the localized heat trace $\Tr(Fe^{-tP})$ by a localizing factor $F\in C^{\infty}(\mathrm{End}(V))$.
For an endomorphism $F\in C^{\infty}(\mathrm{End}(V))$, there is also a complete asymptotic expansion 
\begin{equation*}
\Tr(Fe^{-tP})\sim \sum_{n=0}^{\infty}a_n(F,P)t^{\frac{n-m}{d}}, \qquad t\to 0^{+},
\end{equation*}
where, this time the coefficients $a_n(F,P)$ can be written as the integral
\begin{equation*}
a_n(F,P)=\int_{M}\mathrm{tr}(F(x)a_n(x,P))\mathrm{dvol}(x).
\end{equation*}
A  method to compute these densities,  which uses the pseudodifferential calculus,  will be outlined in the next section, and will be used for differential operators on the noncommutative tori.
 
On the other hand, if $P$ is a Laplace type operator, namely, a positive elliptic operator whose leading symbol is given by the inverse of the metric tensor, then there exists a unique connection $\nabla$ on $V$ and a unique endomorphism $E\in C^{\infty}(\mathrm{End}(V))$ such that \cite{Gilkey1995}
\begin{equation*} 
P = P_{\nabla} - E,
\end{equation*}
where $P_{\nabla}: C^{\infty}(V)\rightarrow C^{\infty}(V)$ is the Bochner Laplacian of the connection defined as the composition of operators as follows 
\begin{equation*}
P_{\nabla}: C^{\infty}(V)\xrightarrow{\nabla}C^{\infty}(T^*M\otimes V)\xrightarrow{\nabla}C^{\infty}(T^*M\otimes T^*M\otimes V)\xrightarrow{-g\otimes 1}C^{\infty}(V).
\end{equation*} 
The first two densities of the corresponding heat kernel for $P$ are given by 
\begin{equation*}
\begin{aligned}
a_0(x, P)&=(4\pi)^{-m/2}\rm{I},\\
a_2(x, P)&=(4\pi)^{-m/2}(\frac{1}{6}R(x)+E),
\end{aligned}
\end{equation*}
where  $R(x)$ is the scalar curvature of $M$.

We apply the above general idea to Laplacians
$\lap_0$ and $\lap_1$ on $\Omega^0(M)$ and $\Omega^1(M)$.
The endomorphism $E$ for the Laplacian on functions $\Delta_0$ is zero, therefore, the first two densities in the heat kernel of $\lap_0$ are given by
\begin{equation*}
\begin{aligned}
a_0(x, \lap_0)&=(4\pi)^{-m/2},\\
a_2(x, \lap_0)&=(4\pi)^{-m/2}(\frac{1}{6}R(x)).
\end{aligned}
\end{equation*}
By Weitzenb\"{o}ck formula \eqref{Weitzen}, the endomorphism for Laplacian on 1-forms $\lap_1$ is $-\mathrm{Ric}_{x}$, the Ricci operator on the cotangent bundle. 
Thus we have 
\begin{equation*}
\begin{aligned}
&a_0(x, \lap_1)=(4\pi)^{-m/2}\rm{I},\\
&a_2(x, \lap_1)=(4\pi)^{-m/2}(\frac{1}{6}R(x)-\mathrm{Ric}_{x}).
\end{aligned}
\end{equation*}
These observations lead us to the following  definition from \cite{Floricel-Ghorbanpour-Khalkhali2016}.
\begin{definition}\label{Riccifunctionaldef}
The Ricci functional $\ricfun:C^{\infty}(\mathrm{End}(T^*M))\to \mathbb{C}$
is defined as 
\begin{equation}\label{Riccifunctional}
\ricfun(F)=a_2(\mathrm{tr}(F), \lap_0)-a_2(F,\lap_1).
\end{equation}
\end{definition}
The Ricci functional can also be described in terms of the spectral zeta function \cite[Proposition 2.2]{Floricel-Ghorbanpour-Khalkhali2016}: 
\begin{equation}\label{zetaformulationofRicci}
\ricfun(F)=\begin{cases}
\zeta(0,\tr(F),\lap_0)-\zeta(0,F,\lap_1)+\Tr(\tr(F)\pr_0)-\Tr(FQ_1), & m=2\\  
\Gamma(\frac{m}{2}-1){\res_{s=\frac{m}{2}-1}}\left( \zeta(s,\tr(F),\lap_0)- \zeta(s,F,\lap_1)\right), & m>2.
\end{cases}
\end{equation}
Here $\zeta(s,F,\lap_1)$ is the localized spectral zeta function defined by $\Tr(F\lap_1^{-s})$ for $\Re (s)> m/2$, $\zeta(s,f,\lap_0)$ is defined similarly, and $\pr_j$ is the orthogonal projection on the kernel of $\lap_j$.

\subsection{Pseudodifferential calculus and local computations}\label{Pseudoandspectral}
In this section, we briefly recall the definition of Connes’ pseudodifferential calculus \cite{Connes1980} for $C^{\ast}-$dynamical systems adapted to 3-dimensional noncommutative tori and outline the necessary steps to use it to compute the heat trace densities. 
These densities then can be used to define  the Ricci density and the scalar curvature density for the noncommutative three torus. 

The action \eqref{actionofTonA} on $\A$ defines  a $C^{\ast}-$dynamical system $(\A, \mathbb{R}^3, \alpha)$. 
A pseudodifferential calculus can be assigned to the given $C^\ast$-dynamical system.
The symbols of order $d$ are given by smooth maps $\rho: \mathbb{R}^3 \rightarrow \Ai$ such that
\begin{enumerate}
\item For any non-negative multi-indices $\alpha, \beta$, there exists a positive number $C_{\alpha, \beta}$ such that 
\begin{equation*}
\|\delta^\alpha \partial^{\beta}\rho(\xi)\|\leqslant C_{\alpha,\beta}(1+|\xi|)^{d-|\beta|}.
\end{equation*}
\item There is a smooth map $f: \mathbb{R}^{3}\backslash \{0\}\rightarrow \Ai$ such that
\begin{equation*}
\lim_{\lambda\rightarrow \infty}\lambda^{-d}\rho(\lambda\xi_1, \lambda\xi_2, \lambda\xi_3)=f(\xi_1, \xi_2, \xi_3).
\end{equation*}
\end{enumerate}
Here, we use the notation that for any multi-index $\alpha=(\alpha_1, \alpha_2, \alpha_3)$ we have
\begin{equation*}
\partial^\alpha=\frac{\partial^{\alpha_1}}{\partial \xi_{1}^{\alpha_1}}\frac{\partial^{\alpha_2}}{\partial \xi_{2}^{\alpha_2}}\frac{\partial^{\alpha_3}}{\partial \xi_{3}^{\alpha_3}},
\quad 
\delta^\alpha = \delta_1^{\alpha_1}\delta_2^{\alpha_2}\delta_3^{\alpha_3}.
\end{equation*}

We shall denote the set of all symbols of order $d$ by $S^{d}(\nctorus)$.
The pseudodifferential operator associated to a given symbol $\rho\in S^d(\nctorus)$ is defined by
\begin{equation*}
P_\rho(a) = (2\pi)^{-3}\int\int e^{-iz\cdot\xi}\rho(\xi)\alpha_z(a)dzd\xi, \quad a\in \Ai.
\end{equation*}
The following theorem from \cite{Connes1980} gives a formula for the symbol of the product of pseudodifferential operators.
\begin{theorem}\label{product}
If $\rho_j\in S^{d_j}(\nctorus)$, $j=1,2,$  there exists a $\rho\in S^{d_1 + d_2}$ such that $P_\rho = P_{\rho_1}P_{\rho_2}$, 
and moreover, $\rho$ has an asymptotic expansion given by
\begin{equation}\label{productsymbol}
\rho\sim \sum_{\alpha}\frac{1}{\alpha!}\partial^{\alpha}(\rho_1)\delta^{\alpha}(\rho_2).
\end{equation}
\end{theorem}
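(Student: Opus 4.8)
The plan is to compute the composition $P_{\rho_1}P_{\rho_2}$ directly from its defining oscillatory integral and then read off the symbol. First I would apply $P_{\rho_1}$ to $P_{\rho_2}(a)$ and use two structural facts: that each $\alpha_w$ is a $*$-automorphism, so it distributes over the product $\rho_2(\xi)\alpha_z(a)$, and that $\alpha$ is a genuine group action of $\mathbb{R}^3$, so $\alpha_w\alpha_z=\alpha_{w+z}$. This yields the fourfold integral
\begin{equation*}
P_{\rho_1}(P_{\rho_2}(a)) = (2\pi)^{-6}\int\!\!\int\!\!\int\!\!\int e^{-iw\cdot\eta}e^{-iz\cdot\xi}\,\rho_1(\eta)\,\alpha_w(\rho_2(\xi))\,\alpha_{w+z}(a)\,dz\,d\xi\,dw\,d\eta,
\end{equation*}
with $\rho_1(\eta)$ kept on the left, which is what will produce the ordering $\partial^\alpha(\rho_1)\,\delta^\alpha(\rho_2)$ in the noncommutative setting.

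Next I would substitute $s=w+z$, so that $e^{-iz\cdot\xi}=e^{-is\cdot\xi}e^{iw\cdot\xi}$ and the phase involving $w$ becomes $e^{-iw\cdot(\eta-\xi)}$. The key analytic input is that the one-parameter groups generating $\alpha$ in the $j$-th direction are the derivations $\delta_j$: since $\alpha_s(u^m)=e^{is\cdot m}u^m$ and $\delta_j(u^m)=m_j u^m$, one has $\partial_{w_j}\alpha_w=i\,\delta_j\circ\alpha_w$, whence the Taylor expansion $\alpha_w(\rho_2(\xi))\sim\sum_\alpha \frac{(iw)^\alpha}{\alpha!}\delta^\alpha(\rho_2(\xi))$. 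Inserting this, I would convert the monomial $w^\alpha$ into $\eta$-derivatives of the phase via $w^\alpha e^{-iw\cdot(\eta-\xi)}=i^{|\alpha|}\partial_\eta^\alpha e^{-iw\cdot(\eta-\xi)}$, integrate by parts in $\eta$ to transfer $\partial_\eta^\alpha$ onto $\rho_1(\eta)$ (the factors $i^{|\alpha|}$ and $(-1)^{|\alpha|}$ combining to $1$), and finally perform the $w$-integration, which gives $(2\pi)^{3}\delta(\eta-\xi)$ and collapses the $\eta$-integral to $\eta=\xi$. What remains is exactly $P_\rho(a)$ with $\rho(\xi)\sim\sum_\alpha \frac{1}{\alpha!}\partial^\alpha(\rho_1)(\xi)\,\delta^\alpha(\rho_2)(\xi)$, i.e.\ \eqref{productsymbol}.

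I expect the main obstacle to be the rigorous justification rather than the formal manipulation above. The oscillatory integrals are not absolutely convergent, so each step—Fubini, the integration by parts, and the reduction by the Dirac delta—must be legitimized, either by a standard regularization (inserting a cutoff $\chi(\epsilon\,\cdot\,)$ and letting $\epsilon\to0$) or by repeated integration by parts exploiting the symbol estimates in item~(1) of the definition. More importantly, the Taylor expansion of $\alpha_w(\rho_2(\xi))$ is only asymptotic, so I would truncate it at order $N$ with an integral remainder, substitute the finite sum (yielding the first $N$ terms of \eqref{productsymbol}) and estimate the remainder term, showing via the bounds $\|\delta^\alpha\partial^\beta\rho_j(\xi)\|\le C_{\alpha,\beta}(1+|\xi|)^{d_j-|\beta|}$ that it defines a symbol of order $d_1+d_2-N-1$. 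This simultaneously establishes that the expansion is a true asymptotic expansion in decreasing order and that the full symbol $\rho$ satisfies the estimates of $S^{d_1+d_2}(\nctorus)$, so that $\rho\in S^{d_1+d_2}(\nctorus)$ as claimed; verifying the homogeneity condition (2) for $\rho$ from those of $\rho_1,\rho_2$ is then routine.
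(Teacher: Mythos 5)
Your argument is correct and is essentially the canonical proof of the composition formula: the paper itself does not prove this theorem but cites Connes' 1980 note, and the standard derivation there (and in its detailed elaborations) is exactly your computation --- compose the two oscillatory integrals, use that $\alpha$ is a group action of $\mathbb{R}^3$ by automorphisms with infinitesimal generators $i\delta_j$, Taylor-expand $\alpha_w(\rho_2(\xi))$ in $w$, trade $w^\alpha$ for $\partial_\eta^\alpha$ of the phase, integrate by parts onto $\rho_1$, and collapse the $w,\eta$ integrals to $\eta=\xi$. Your identification of where the real work lies (regularizing the oscillatory integrals and bounding the order-$N$ Taylor remainder by the symbol estimates so that it lies in $S^{d_1+d_2-N-1}(\nctorus)$) is also the right one, and the ordering $\partial^{\alpha}(\rho_1)\delta^{\alpha}(\rho_2)$ comes out correctly from keeping $\rho_1(\eta)$ on the left.
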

\begin{remark}
 For our purposes, we need more general symbols which take values in $\Ai\otimes M_n(\mathbb{C})$.
The above calculus easily extends to this setting.
\end{remark}
In the rest of this section we outline the steps through which one can find the second density of the heat trace  $a_2$ for a positive elliptic differential operator on $\nctorus$ using the pseudodifferential calculus.
For more details we refer the readers to \cite{Gilkey1995} for the commutative case and \cite{Connes-Tretkoff2011,Fathizadeh-Khalkhali2013,Connes-Moscovici2014} for the noncommutative case.

Let $P$ be a second order positive elliptic operator on $\mathbb{T}_\theta^3$ with positive principal symbol, i.e. if we write the symbol of $P$ as the sum of the homogeneous parts $a_2(\xi)+a_1(\xi)+a_0(\xi)$, $a_2(\xi)$ is positive and it is invertible for any nonzero $\xi\in\mathbb{R}^3$.
Then  the parametrix $(P-\lambda)^{-1}$ for any $\lambda\in \mathbb{C}\backslash \mathbb{R}^{+}$ is a pseudodifferential operator of order $-2$ and its symbol $\sigma((P-\lambda)^{-1})$ can be written as $b_0(\xi,\lambda)+b_1(\xi,\lambda)+\cdots$, where $b_j(\xi,\lambda)$ is homogeneous of order $-2-j$ in $(\xi, \lambda)$, that is it satisfies $b_j(t\xi, t^2\lambda)=t^{-2-j}b_j(\xi, \lambda)$ for all $t\geq 0$.
The terms $b_j$  can be written in terms of $a_j$'s and $b_0$ using the recursive formula for symbol product \eqref{productsymbol} applied to the equality $(P-\lambda)^{-1}(P-\lambda)\sim 1$:
\begin{equation}\label{bjs}
\begin{aligned}
b_0(\xi, \lambda)=&(a_2-\lambda)^{-1},\\
b_1(\xi, \lambda)=&- b_0a_1 b_0-\sum_{j=1}^3\partial_{j}(b_0)\delta_j(a_2)b_0,\\
b_2(\xi, \lambda)
=&-b_0 a_0b_0-b_1 a_1b_0\\
&-\sum_{i=1}^3 \Big(\partial_i(b_0)\delta_i(a_1)b_0+ \partial_i(b_1)\delta_i(a_2)b_0+\frac12 \partial_i\partial_j(b_0)\delta_i\delta_j(a_2)b_0\Big).
\end{aligned}
\end{equation}

Using the Cauchy integral formula and the formula for the trace in terms of the symbols of a smoothing operator, one has the asymptotic expansion of the localized heat trace $\Tr(Fe^{-tP})$ as follows:
\begin{equation*}
\Tr(Fe^{-tP})\sim \sum_{n=0}^\infty t^{\frac{n-3}{2}} \varphi\Big(\tr\big(F \frac{1}{(2\pi)^3}\int_{\mathbb{R}^3}\frac{1}{2\pi i}\int_\gamma e^{-\lambda}  b_n(\xi,\lambda) d\lambda d\xi\big)\Big).
\end{equation*}
The geometric meaning of the second density $a_2(P)$, i.e. densities for the coefficient of the term $t^{-\frac12}$, in the classical case is discussed in section \ref{Riccifunctionalsection}.
In the noncommutative case, by analogy, the second density which is given by    
\begin{equation}\label{secondtermintegralform1}
a_2(P)=\frac{1}{(2\pi)^3}\int_{\mathbb{R}^3}\frac{1}{2\pi i}\int_\gamma e^{-\lambda}  b_2(\xi,\lambda) d\lambda d\xi,
\end{equation} 
can be used to define the Ricci and scalar curvature for the noncommutative torus when $P$ is a carefully chosen geometric operator.
By a homogeneity argument given in \cite{Khalkhali-Motadelro-Sadeghi2016} for noncommutative three tori, we can rewrite $a_2(P)$ as 
\begin{equation}\label{secondtermintegralform}
a_2(P)=\frac{1}{8\pi^{7/2}}\int_{\mathbb{R}^3} b_2(\xi,-1)d\xi.
\end{equation}

To compute the integral \eqref{secondtermintegralform} above, one needs to apply the rearrangement lemma.
Here we shall use a general version from \cite[Corollary 3.5]{Lesch2017}. 
\begin{proposition}\label{leschrearrangemnet}
Suppose $\mathcal{A}$ is a $C^{\ast}-$algebra.
Let $f_0,...,f_p: \mathbb{R}_{\geqslant 0}\rightarrow \mathbb{C}$ be smooth functions such that for each pair of positive numbers $0<C_1<C_2$ and each multi-index $\alpha\in \mathbb{N}^{n+1}$, the function $f(x_0,...,x_p):=\prod_{j=0}^{p}f_j(x_j)$ satisfies
\begin{equation*}
\int_{0}^{\infty}\sup_{\substack{C_1\leqslant s_j\leqslant C_2\\ 0\leqslant j \leqslant n}} |u^{|\alpha|}(\partial^{\alpha}f)(us)|du < \infty,
\end{equation*}
Let $A=e^a$ for some selfadjoint element $a\in \mathcal{A}$. 
Then for $\rho_1,\cdots,\rho_p\in \mathcal{A}$
\begin{equation*}
\begin{aligned}
&\int_{0}^{\infty}f_0(uA)\cdot b_1 \cdot f_1(uA)\cdot \cdots \cdot b_p\cdot f_p(uA)du\\
&=A^{-1}F_{\gamma}(\Delta_{(1)}, \Delta_{(1)}\Delta_{(2)},\cdots, \Delta_{(1)}\cdots \Delta_{(p)})(\rho_1 \cdot\rho_2 \cdots \cdot \rho_p),
\end{aligned}
\end{equation*}
where $\Delta_{(j)}$ is the modular operator acting on $b_j$ by $\Delta(b)=A^{-1}bA$, and the smooth function $F$ is given by
\begin{equation*}
F(s_1,..., s_p) = \int_{0}^{\infty} f_0(u)\cdot f_1(us_1)\cdot \cdots\cdot f_p(us_p)du. 
\end{equation*}
\end{proposition}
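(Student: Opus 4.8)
The plan is to reduce the identity to a computation in the joint functional calculus of the commuting left and right multiplication operators by $A$, exploiting the fact that the modular operator records the \emph{ratio} of spectral parameters. Since $A=e^{a}$ is positive and invertible, in a faithful representation (or in the enveloping von Neumann algebra) its left and right multiplication operators are commuting positive operators whose spectra lie in a compact subinterval of $(0,\infty)$. Writing the spectral resolution $A=\int\lambda\,dE(\lambda)$ and inserting it in front of each of the $p+1$ function factors $f_0,\dots,f_p$, the integrand becomes
\begin{equation*}
\int\!\cdots\!\int f_0(u\lambda_0)f_1(u\lambda_1)\cdots f_p(u\lambda_p)\; dE(\lambda_0)\,\rho_1\,dE(\lambda_1)\,\rho_2\cdots\rho_p\,dE(\lambda_p),
\end{equation*}
so that the whole expression is diagonalized by the generalized matrix elements $dE(\lambda_0)\,\rho_1\,dE(\lambda_1)\cdots\rho_p\,dE(\lambda_p)$. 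On such a component the modular operator $\Delta_{(j)}$, which conjugates the $j$-th factor $\rho_j$ by $A$ (multiplying $A^{-1}$ at the boundary carrying $dE(\lambda_{j-1})$ and $A$ at the boundary carrying $dE(\lambda_j)$), acts as multiplication by $\lambda_j/\lambda_{j-1}$. Consequently the cumulative products satisfy $\Delta_{(1)}\cdots\Delta_{(j)}\leftrightarrow \lambda_j/\lambda_0$.

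Next I would carry out the inner $u$-integral and substitute $v=u\lambda_0$, which turns
\begin{equation*}
\int_0^\infty f_0(u\lambda_0)\cdots f_p(u\lambda_p)\,du=\lambda_0^{-1}\int_0^\infty f_0(v)\,f_1\!\Big(v\tfrac{\lambda_1}{\lambda_0}\Big)\cdots f_p\!\Big(v\tfrac{\lambda_p}{\lambda_0}\Big)\,dv=\lambda_0^{-1}\,F\!\Big(\tfrac{\lambda_1}{\lambda_0},\ldots,\tfrac{\lambda_p}{\lambda_0}\Big).
\end{equation*}
The surviving factor $\lambda_0^{-1}\,dE(\lambda_0)$ on the far left integrates to left multiplication by $A^{-1}$, while the argument $(\lambda_1/\lambda_0,\ldots,\lambda_p/\lambda_0)$ is exactly the joint spectrum of $(\Delta_{(1)},\Delta_{(1)}\Delta_{(2)},\ldots,\Delta_{(1)}\cdots\Delta_{(p)})$. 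Reassembling the spectral integrals then identifies the result as $A^{-1}F(\Delta_{(1)},\ldots,\Delta_{(1)}\cdots\Delta_{(p)})(\rho_1\cdots\rho_p)$, which is the claim; the $p=1$ case collapses to $A^{-1}F(\Delta)(\rho_1)$ and serves as a convenient sanity check.

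The delicate points, and the ones I expect to be the main obstacle, are analytic rather than algebraic: justifying that the order of the $u$-integration and the multivariable functional calculus may be interchanged, and that the limiting operator $F(\Delta_{(1)},\ldots,\Delta_{(1)}\cdots\Delta_{(p)})$ is bounded. This is precisely where the hypothesis $\int_0^\infty \sup_{C_1\le s_j\le C_2}|u^{|\alpha|}(\partial^{\alpha}f)(us)|\,du<\infty$ enters: choosing $C_1,C_2$ to bracket the spectra of the modular operators—possible because each $\mathrm{spec}(\Delta_{(j)})$ is a compact subset of $(0,\infty)$—the bound guarantees that $F$ and all its derivatives are finite and continuous on the relevant compact polysector, so that $F$ applied to commuting bounded operators with spectrum in that region is well defined and bounded, and simultaneously that the operator-valued integrand is absolutely $u$-integrable uniformly over the spectrum, legitimizing the interchange by a Fubini/dominated-convergence argument. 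A secondary technical step is to set up the multi-slot functional calculus carefully, verifying that $\{\Delta_{(1)}\cdots\Delta_{(j)}\}_{j=1}^{p}$ is a genuinely commuting family acting on the $p$-fold product so that $F(\cdot)$ is unambiguous; this follows because all of these operators are diagonal in the joint spectral decomposition used above. Modulo these analytic justifications the identity is forced by the spectral bookkeeping.
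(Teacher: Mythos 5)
The paper does not actually prove Proposition \ref{leschrearrangemnet}: it is quoted verbatim from \cite[Corollary 3.5]{Lesch2017}, so there is no in-paper argument to compare against. Measured against the proof in that reference, your proposal is correct and follows essentially the same route. The core of Lesch's argument is exactly your spectral bookkeeping: the $p+1$ insertion points of $f_j(uA)$ are realized as a commuting family of left-multiplication operators $A^{(0)},\dots,A^{(p)}$ acting in the separate slots of a (projectively completed) tensor power $\mathcal{A}^{\hat\otimes(p+1)}$, followed by the multiplication map; the $u$-integral is then evaluated on the joint spectrum, the substitution $v=u\lambda_0$ produces the prefactor $\lambda_0^{-1}\mapsto A^{-1}$ and the arguments $\lambda_j/\lambda_0$, and the change of coordinates $(\lambda_0,\dots,\lambda_p)\mapsto(\lambda_0,\lambda_1/\lambda_0,\dots,\lambda_p/\lambda_0)$ is precisely the passage to the cumulative modular operators $\Delta_{(1)}\cdots\Delta_{(j)}$. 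The one place where your write-up is heuristic rather than rigorous is the interleaved expression $dE(\lambda_0)\,\rho_1\,dE(\lambda_1)\cdots\rho_p\,dE(\lambda_p)$, which is not literally an operator-valued measure on $\mathbb{R}^{p+1}$; the clean fix is the tensor-slot formulation just described, since the joint spectral measure of the commuting family $A^{(0)},\dots,A^{(p)}$ is a genuine product measure and the algebra elements are inserted only after the functional calculus, via the multiplication map. Your identification of where the integrability hypothesis enters (domination for Fubini over a compact polysector bracketing the spectra, plus well-definedness of $F$ in the multivariable calculus) is the correct role of that assumption; note also that the $b_j$ versus $\rho_j$ mismatch and the stray subscript in $F_\gamma$ are typos inherited from the statement, not issues with your argument.
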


In the following,  we first compute the Laplacians  $\lap_{0,h}$ and $\lap_{1,h}$ and show that they are anti-unitary equivalent to  operators $\tilde{\lap}_{0,h}$ and $\tilde{\lap}_{1,h}$ which are  second order positive elliptic differential operators.
Hence, the above theory can be applied  to find their second densities $a_2(\tilde{\lap}_{0,h})$ and $a_2(\tilde{\lap}_{1,h})$. 
Now we can define   
\begin{definition}\label{ncscalar}
The scalar curvature functional $\scalarfun:\Ai\to \mathbb{C}$ is defined as
\begin{equation}
\scalarfun(a):=\varphi(aa_2(\lap_{0,h})),\qquad a\in\Ai,
\end{equation}
and $a_2(\lap_{0,h})$ will be called  the scalar curvature density or just the scalar curvature and we denote it by $\scalar$.
\end{definition}
\noindent Similar to  Definition \ref{Riccifunctionaldef}, we define
\begin{definition}\label{ncricci}
The Ricci curvature functional $\ricfun:\Ai\otimes M_3(\mathbb{C})\to \mathbb{C}$ is defined as
\begin{equation}
\ricfun(F):=\varphi(\tr(F)a_2(\lap_{0,h}))-\varphi(Fa_2(\lap_{1,h})),\qquad F\in\Ai\otimes M_3(\mathbb{C}).
\end{equation}
The Ricci density is then defined by the equation
$$\ricfun(F)=\varphi(\tr(F\ricden)),\qquad F\in\Ai\otimes M_3(\mathbb{C}).$$  
It can be readily seen that 
$$\ricden=\scalar\otimes \mathrm{I}_3-a_2({\lap}_{1,h}).$$
\end{definition}
\noindent Using the Mellin transform, one can show that the above definition is equivalent to the equation \eqref{zetaformulationofRicci}.
\begin{remark}\label{thediffofcoeff}
Note that we choose to drop the effect of the volume form density $vol$ on the Ricci and scalar curvature densities. We have also dropped the overall  multiplicative constants in our definitions above.
This means that we are ignoring a factor of $\frac{1}{48\pi^{3/2}}vol$ for the scalar  curvature density and a factor of $\frac{1}{8\pi^{3/2}}vol$ for the Ricci density.
Moreover, we shall use operators which are anti-unitary equivalent to the Laplacians while computing the densities.
It can be seen readily that if $\tilde{\lap}=U^*\lap U$, for some anti-unitary operator $U$ then 
$$\Tr(Fe^{-t\tilde{\lap}})=\Tr(FU^*e^{-t\lap}U)=\Tr(UFU^*e^{-t\lap}).$$  
Similarly, the localized heat trace densities are related as above.
These two points should be taken into account while we recover  the classical results in the limit $\theta\to 0$ of our formulas for the noncommutative tori.
\end{remark}

\section{Ricci density for conformally flat metrics}\label{conformalmetrics}
In this section we first investigate how the geometry of  conformally flat metrics on three torus $\mathbb{T}^3$ can be implemented on the noncommutative three tori $\mathbb{T}_\theta^3$.
We then use it to define the Laplacian on functions and on 1-forms; that is we find the Laplacian of the de Rham complex \eqref{deRhamcomplex} with respect to the induced inner products. 
Then using the pseudodifferential calculus we compute the second densities of heat trace asymptotic for these operators which by Definitions \ref{ncscalar} and \ref{ncricci} can be used to define  the scalar curvature density and the Ricci curvature density for $\nctorus$.

In the commutative case, if $h\in C^{\infty}(M)$ is a real valued function,  conformally changing the Riemannian metric by the function $e^{-2h}$ will result in changing the volume form. 
For instance, if the dimension of a closed Riemannian manifold $M$ is $m$, and we denote the conformal change of $g$ by $\tilde{g}=e^{-2h}g$, then the new volume form $\tilde{dx}$ is $e^{-mh}dx$.
As a result, the inner products on $\Omega^{0}(M)$, $\Omega^{1}(M)$, and $\Omega^{2}(M)$ are given by
\begin{equation*}
\begin{aligned}
\langle f_{1}, f_{2} \rangle_{\tilde{g}} &= \int_{M}f_{1}\bar{f_{2}}e^{-mh}dx,\\ 
\langle \alpha_{1}, \alpha_{2}\rangle_{\tilde{g}} &= \int_{M}g^{-1}(\alpha_{1}, \bar{\alpha}_{2})e^{(2-m)h}dx, \\
\langle \omega_{1}, \omega_{2}\rangle_{\tilde{g}} &= \int_{M}(\wedge^{2}g^{-1})(\omega_{1}, \bar{\omega}_{2})e^{(4-m)h}dx. 
\end{aligned}
\end{equation*}

Inspired by these classical equations, we are able to study the conformal change of metrics for  noncommutative three tori.  
Let $h$ be a self-adjoint positive element of $\Ai$ 
and let  $\varphi_{0}(a)=\varphi(ae^{-3h}),$ for any $a\in \A$.
Denote the Hilbert space given by the GNS construction of $\A$ with respect to the positive linear functional $\varphi_{0}$ by $\mathcal{H}_{h}^{(0)}$.
In other words, the inner product of $\mathcal{H}_{h}^{(0)}$ is given by
\begin{equation*}
\langle a, b\rangle_{0,h} = \varphi(b^{*}ae^{-3h}).
\end{equation*}
Let $\mathcal{H}_{h}^{(1)}$ denote the Hilbert space completion of $\Omega^1\mathbb{T}_\theta^3$ with respect to the inner product of  $\mathcal{H}_{h}^{(1)}$ given by
\begin{equation*}
\langle (a_{1}, a_{2}, a_{3}), (b_{1}, b_{2}, b_{3}) \rangle_{1,h} = \varphi(\sum_{i=1}^{3}b_{i}^{*}a_{i}e^{-h}).
\end{equation*}
Similarly, let $\mathcal{H}_{h}^{(2)}$ denote the Hilbert space completion of  $\Omega^2\mathbb{T}_\theta^3$ with respect to the inner product of $\mathcal{H}_{h}^{(2)}$ given by
\begin{equation*}
\langle (a_{1}, a_{2}, a_{3}), (b_{1}, b_{2}, b_{3}) \rangle_{2,h} = \varphi(\sum_{i=1}^{3}b_{i}^{*}a_{i}e^{h}).
\end{equation*}

We identify the formal adjoint operator $d_j^{*}$ of $d_j$ acting on elements of $\Omega^{j+1}\nctorus\subset \mathcal{H}^{(j+1)}_h$ as follows. 
Let us denote $e^{h/2}$ by $k$. Then  we have
$$d_{0}^{*}(a_1,a_2,a_3) = -i\sum_{j=1}^{3} \delta_{j} (a_{j}k^{-2})k^{6},$$
and
\begin{align*}
&d_{1}^{*}(a_1,a_2,a_3)=\\
&\Big(i\delta_{3}(a_{3}k^{2})k^{2} + i\delta_{2}(a_{1}k^{2})k^{2}, i\delta_{3}(a_{2}k^{2})k^{2} - i\delta_{1}(a_{1}k^{2})k^2,-i\delta_{1}(a_{3}k^{2})k^{2} - i\delta_{2}(a_{2}k^{2})k^{2}\Big).
\end{align*}
Now, we can define the Laplacian on 0-forms to be $\lap_{0, h} = d_{0}^{*}d_{0}$, and the Laplacian on 1-forms to be  $\Delta_{1, h}=d_{1}^{*}d_{1} + d_{0}d_{0}^{*} $. 
We have 
\begin{equation*}
\lap_{0, h}(a)
= d_{0}^{*}d_{0}(a)
=\sum_{j=1}^3\delta_{j}(\delta_{j}(a)k^{-2})k^{6},
\end{equation*}
On the other hand, 
the Laplacian on 1-forms is given by
\begin{eqnarray*}
&\lap_{1, h}\left(a_{1}, a_{2}, a_{3}\right)=&\\
\Big(\hspace*{-0.2cm}&
\left(\delta_2(\delta_2(a_1)k^2)+\delta_3(\delta_3(a_1)k^2)-\delta_2(\delta_1(a_2)k^2)-\delta_3(\delta_1(a_3)k^2)\right)k^2+{\textstyle{\sum} \,}\delta_1(\delta_j(a_jk^{-2})k^6),&\\
&\left(\delta_1(\delta_1(a_2)k^2)-\delta_1(\delta_2(a_1)k^2)+\delta_3(\delta_3(a_2)k^2)-\delta_3(\delta_{2}(a_{3})k^{2})\right)k^{2}+{\textstyle{\sum} \,}\delta_2(\delta_j(a_jk^{-2})k^6),&\\
&\left(\delta_1(\delta_1(a_3)k^2)-\delta_1(\delta_3(a_1)k^2)-\delta_2(\delta_3(a_2)k^2) + \delta_2(\delta_2(a_3)k^2)\right)k^2 + {\textstyle{\sum} \,}\delta_3(\delta_j(a_jk^{-2})k^6)&\hspace*{-0.2cm}\Big).
\end{eqnarray*}

The right multiplication operator $R_{k^{3}}$ satisfies the property 
\begin{equation*}
\begin{aligned}
\langle R_{k^{3}}a, R_{k^{3}}b\rangle_{0, h} = \varphi_{0}(k^{3}b^{*}ak^{3})= \varphi(k^{3}b^{*}ak^{-3})= \varphi(b^{*}a)= \langle a, b\rangle_{0, 0},
\end{aligned}
\end{equation*}
and thus extends  to a unitary operator from $\mathcal{H}_{0}^{(0)}$ to $\mathcal{H}_{h}^{(0)}$, which we still denote  by $R_{k^{3}}$.
Let $J: \A\rightarrow \A$ be the adjoint map $J(a)=a^{*}$. Then 
$R_{k^{3}}J: \mathcal{H}_{0}^{(0)}\rightarrow \mathcal{H}_{h}^{(0)}$
is an anti-unitary operator.  Thus $\lap_{0, h}$ is anti-unitary equivalent to
\begin{equation*}
\tilde{\lap}_{0,h}:=JR_{k^{3}}^{*}\lap_{0, h}R_{k^{3}}J
= k^{-3}(J\lap_{0, h}J)k^{3}=\sum_{j=1}^{3}k^{3}  \delta_{j}  k^{-2}  \delta_{j}  k^{3}.
\end{equation*}

It can also be seen that 
\begin{equation*}
\quad \langle R_{k}(a_1,a_2,a_3), R_{k}(b_1,b_2,b_3) \rangle_{1, h}
= \langle (a_1,a_2,a_3),(b_1,b_2,b_3) \rangle_{1, 0}.
\end{equation*}
Hence $R_{k}$ can be extended to a unitary operator from $\mathcal{H}_{0}^{(1)}$ to $\mathcal{H}_{h}^{(1)}$, which we still denote  by $R_{k}$.
Then we get an anti-unitary operator $ R_{k}J: \mathcal{H}_{0}^{(1)}\rightarrow \mathcal{H}_{h}^{(1)}.$
Therefore, $\lap_{1, h}$ is anti-unitary equivalent to
\begin{equation*}
\tilde{\lap}_{1,h}:=JR_{k}^{*}\lap_{1, h}R_{k}J
=k^{-1}J\lap_{1,h}Jk.
\end{equation*}
Since $JR_{k^{m}}J=k^{m}$, and  $J\delta_{j} = -\delta_{j}J$, for $j=1, 2, 3,$  we have 
\begin{equation*}
\quad JR_{k^{m}}\delta_{i}R_{k^{n}}\delta_{j}J
=JR_{k^{m}}JJ\delta_{i}R_{k^{n}}\delta_{j}J
=k^{m}\delta_{i}k^{n}\delta_{j}.
\end{equation*}
Thus, 
\begin{eqnarray*}
&\tilde{\lap}_{1,h}(a_1, a_2, a_3)=&\\
\Big(&
k\delta_3 k^2 \delta_3 k a_1 + k \delta_2 k^2 \delta_2 k a_1- k \delta_2 k^2 \delta_1 k a_2 - k\delta_3 k^2 \delta_1 k a_3 + \sum k^{-1}\delta_1 k^6 \delta_j k^{-1} a_j,&\\
&- k \delta_1 k^2\delta_2ka_1 +k \delta_3 k^2 \delta_3 k a_2 + k \delta_1 k^2\delta_1k a_2 -k \delta_3 k^2 \delta_2k a_3 + \sum k^{-1}\delta_2k^6 \delta_j k^{-1}a_j,&\\
&-k\delta_1 k^2 \delta_3 k a_1+k\delta_1 k^2\delta_1k a_3 - k\delta_2 k^2 \delta_3k a_2+ k\delta_2 k^2 \delta_2 k a_3 + \sum k^{-1} \delta_3 k^6 \delta_j k^{-1} a_j&\Big).
\end{eqnarray*}

\subsection{Scalar curvature}
The scalar curvature for conformally flat  metrics on noncommutative three tori was first computed in \cite{Khalkhali-Motadelro-Sadeghi2016}. 
For the sake of  completeness, we shall compute it again here.
As discussed in Section 2.4, we define the scalar curvature of $\nctorus$ to be 
\begin{equation}\label{scalarcurvature2}
\scalar=a_2(\tilde{\lap}_{0,h})=\frac{1}{8\pi^{7/2}} \int_{\mathbb{R}^3}b_2(\xi, -1) d\xi.
\end{equation}
where $b_2(\xi,-1)$ is the second term in the asymptotic expansion of the symbol of the parametrix of $\tilde{\lap}_{0,h}$.

To compute $b_2$ we need first to find the symbol of the Laplacian on functions.
\begin{lemma}\label{symboloflap0noncon}
Let the symbol of $\tilde{\lap}_{0,h}$ be written as the sum of its homogeneous parts,
$\sigma(\tilde{\lap}_{0,h}) = a_2 + a_{1} + a_0$.
Then we have
\begin{equation*}
\begin{aligned}
&a_2=k^4 \xi _1^2+k^4 \xi _2^2+k^4 \xi _3^2,\\
&a_1=\sum_{i=1}^{3}(2 k\delta _i(k^3)+k^3\delta _i(k^{-2})k^3)\xi_i,\\
&a_0=\sum_{i=1}^{3}\Big(k\delta _i^2(k^3) + k^3\delta _i(k^{-2})\delta _i(k^3)\Big).
\end{aligned}
\end{equation*}\qed
\end{lemma}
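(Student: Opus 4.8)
The plan is to compute the symbol directly from the explicit formula $\tilde{\lap}_{0,h}=\sum_{j=1}^3 k^3\delta_j k^{-2}\delta_j k^3$, viewing each factor $k^{m}$ as the operator $L_{k^m}$ of left multiplication and each $\delta_j$ as a derivation. The central observation is that any differential operator written in the normal form $P=\sum_\alpha L_{b_\alpha}\delta^\alpha$, with all derivations pushed to the right and $b_\alpha\in\Ai$, has symbol $\sigma(P)(\xi)=\sum_\alpha b_\alpha\xi^\alpha$. This follows at once from the product formula \eqref{productsymbol} of Theorem \ref{product}: the symbol of $L_{b}$ is the $\xi$-independent element $b$, while the symbol of $\delta^\alpha$ is $\xi^\alpha$, so in the composition every term involving $\partial^\beta_\xi(b)$ with $\beta\neq 0$ vanishes and only $b\,\xi^\alpha$ survives. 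Thus the whole problem reduces to rewriting $\tilde{\lap}_{0,h}$ in this normal form.

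To do so I would push the two derivations in each summand $k^3\delta_j k^{-2}\delta_j k^3$ to the right using the operator form of the Leibniz rule, $\delta_j L_b=L_b\delta_j+L_{\delta_j(b)}$. Applying this repeatedly, the inner $\delta_j$ first and then the outer one, collects the summand into its pieces of degree $2$, $1$, and $0$ in $\delta_j$:
\begin{equation*}
k^3\delta_j k^{-2}\delta_j k^3 = L_{k^4}\,\delta_j^2 + L_{k\delta_j(k^3)+k^3\delta_j(k)}\,\delta_j + L_{k^3\delta_j(k^{-2}\delta_j(k^3))}.
\end{equation*}
Reading off the symbol via $\delta_j^\alpha\mapsto\xi_j^\alpha$ and summing over $j$ immediately produces $a_2=k^4(\xi_1^2+\xi_2^2+\xi_3^2)$ together with preliminary expressions for $a_1$ and $a_0$.

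The last step is to bring $a_1$ and $a_0$ into the stated form. Here I would use the identity $\delta_j(k^{-2})=-k^{-2}\delta_j(k^2)k^{-2}$, obtained by applying $\delta_j$ to $k^{-2}k^2=1$, together with the Leibniz rule. For instance, expanding $\delta_j(k^3)=\delta_j(k^2)k+k^2\delta_j(k)$ shows that $k\delta_j(k^3)+k^3\delta_j(k)=2k\delta_j(k^3)+k^3\delta_j(k^{-2})k^3$, which is the claimed coefficient of $\xi_j$ in $a_1$; a single application of Leibniz to $k^3\delta_j(k^{-2}\delta_j(k^3))$ gives $k\delta_j^2(k^3)+k^3\delta_j(k^{-2})\delta_j(k^3)$, the claimed $a_0$. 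I expect the only real care to lie in the bookkeeping: because $k$ and $\delta_j$ do not commute, the order of every left multiplication must be preserved throughout, and the two equivalent forms of $a_1$ must be reconciled through the Leibniz identities rather than by naive cancellation. None of the steps presents a conceptual obstacle; the work is entirely in the careful noncommutative algebra.
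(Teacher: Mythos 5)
Your proposal is correct and is essentially the paper's (implicit) argument: the authors also simply expand $\tilde{\lap}_{0,h}=\sum_j k^3\delta_j k^{-2}\delta_j k^3$ into normal form with all derivations on the right and then read off the symbol by replacing $\delta_j$ with $\xi_j$, exactly as they state for the analogous non-conformal lemma. Your verification of the coefficient identities $k\delta_j(k^3)+k^3\delta_j(k)=2k\delta_j(k^3)+k^3\delta_j(k^{-2})k^3$ and $k^3\delta_j(k^{-2}\delta_j(k^3))=k\delta_j^2(k^3)+k^3\delta_j(k^{-2})\delta_j(k^3)$ is accurate, so the proof is complete.
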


To evaluate the integral in \eqref{scalarcurvature2}, for this case, we shall first move to spherical coordinates.
After performing the angular integrals, we are left with sums of integrals of the form
$$\int_{0}^{\infty}b_0^{m_0}\rho_1 b_0^{m_1}\rho_2b_0^{m_2}\cdots \rho_lb_0^{m_p} u^{(-3/2 + \sum_{j=0}^{p}m_j)}du.$$
To compute these latter integrals we need to use the following version of the rearrangement lemma.
Here we present it as a corrollary of Proposition \ref{leschrearrangemnet}, but a straightforward  proof can be found in \cite{Khalkhali-Motadelro-Sadeghi2016}.
\begin{corollary}\label{rearrangecon}
Let $b_0=(1+k^4 u)^{-1}$, $\rho_j\in \Ai$, $m_j\in \mathbb{Z}$, for $j=0, 1,..., p,$ and  set the modular operator $\Delta$ be $\Delta(x)=k^{-6}xk^6.$ Then 
\begin{equation*}
\int_{0}^{\infty}b_0^{m_0}\rho_1 b_0^{m_1}\cdots \rho_lb_0^{m_p} u^{(-\frac32+\sum m_j)}du=k^{(2-4\sum m_j)}F_{m_0,,...,m_p}(\Delta_{(1)},\cdots,\Delta_{(p)})(\rho_1\cdot\rho_2 \cdots \rho_p),
\end{equation*}
where 
\begin{equation*}
F_{m_0,\cdots ,m_p}(s_1,\cdots,s_p)=
\int_{0}^{\infty}(1+u)^{-m_0}\prod_{j=1}^{p}\Big( u\prod_{h=1}^{j}s_{h}^{\frac23} + 1\Big)^{-m_j}u^{(-\frac32+\sum m_j)}du.
\end{equation*}
\end{corollary}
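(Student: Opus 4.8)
The plan is to realize this identity as a direct specialization of Proposition \ref{leschrearrangemnet}. First I would set $A := k^6 = e^{3h}$, which is positive and of the form $e^a$ with $a = 3h$ self-adjoint, so that the modular operator $\Delta(x) = A^{-1}xA = k^{-6}xk^6$ coincides with the one in the statement. The essential observation is that $k^4 = (k^6)^{2/3} = A^{2/3}$, so that $b_0 = (1 + k^4 u)^{-1} = (1 + u A^{2/3})^{-1}$ and more generally $b_0^{m_j} = (1 + u A^{2/3})^{-m_j}$. I would then choose the scalar functions of Proposition \ref{leschrearrangemnet} to be $f_j(y) = (1 + y^{2/3})^{-m_j}$ for $1 \le j \le p$, and $f_0(y) = y^{\beta}(1 + y^{2/3})^{-m_0}$, where the exponent $\beta = \tfrac{2}{3}\big(\sum_j m_j - 2\big)$ is chosen precisely to reproduce the radial weight $u^{-3/2 + \sum m_j}$ after a change of variables. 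With these choices $f_j(uA) = (1 + u^{2/3}k^4)^{-m_j}$, which differs from $b_0^{m_j}$ only by the replacement $u \mapsto u^{2/3}$.

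Next I would run Proposition \ref{leschrearrangemnet} and reconcile the two integrals by the substitution $u = w^{3/2}$ (equivalently $w = u^{2/3}$). Writing $S = \sum_{j=0}^p m_j$ and letting $I$ denote the integral on the left-hand side of the statement, this substitution turns each $(1 + u^{2/3}k^4)^{-m_j}$ into $b_0(w)^{m_j} = (1 + w k^4)^{-m_j}$, contributes a Jacobian factor $\tfrac{3}{2}w^{1/2}$, and produces $w^{3\beta/2}$ from $u^{\beta}$; the choice of $\beta$ makes the total $w$-power equal to $-3/2 + S$, so that $\int_0^\infty f_0(uA)\,\rho_1 f_1(uA)\cdots \rho_p f_p(uA)\,du = \tfrac{3}{2}\,k^{6\beta}\,I$. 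Proposition \ref{leschrearrangemnet} evaluates this same Lesch integral as $A^{-1}F_\gamma(\Delta_{(1)},\Delta_{(1)}\Delta_{(2)},\dots)(\rho_1\cdots\rho_p)$, and performing the identical substitution inside the scalar integral defining $F_\gamma$ turns it into $\tfrac{3}{2}\,F_{m_0,\dots,m_p}$, the arguments $s_h^{2/3}$ emerging because $(u s_h)^{2/3} = u^{2/3}s_h^{2/3}$ and Lesch's composite arguments $\Delta_{(1)}\cdots\Delta_{(j)}$ re-expressing themselves as the products $\prod_{h=1}^j \Delta_{(h)}^{2/3}$ that appear in $F_{m_0,\dots,m_p}$. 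Collecting the powers of $A = k^6$ from $A^{-1}$ and from $k^{-6\beta}$ gives $k^{-6}\cdot k^{-6\beta} = k^{2 - 4S}$, while the two Jacobian constants cancel ($\tfrac{2}{3}\cdot\tfrac{3}{2} = 1$), yielding exactly $I = k^{2-4\sum m_j}F_{m_0,\dots,m_p}(\Delta_{(1)},\dots,\Delta_{(p)})(\rho_1\cdots\rho_p)$. As a consistency check, in the commutative shadow where all $\rho_j$ commute with $k$ one may substitute $u = k^{-4}t$ directly and read off the prefactor $k^{2-4S}$ together with $F_{m_0,\dots,m_p}(1,\dots,1)$.

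The main obstacle is not the algebra but verifying that the chosen $f_j$ satisfy the integrability hypothesis of Proposition \ref{leschrearrangemnet}, namely finiteness of $\int_0^\infty \sup_{C_1\le s\le C_2}|u^{|\alpha|}(\partial^\alpha f)(us)|\,du$. Near $u = 0$ the factor $f_0(us)\sim (us)^{\beta}$ forces the condition $\beta > -1$, i.e. $\sum_j m_j \ge 1$, while decay at $u = \infty$ imposes constraints relating the $m_j$ and $p$ so that each radial integral converges absolutely; these are exactly the conditions under which $F_{m_0,\dots,m_p}$ is well defined, and they hold in all the concrete cases arising from \eqref{secondtermintegralform}. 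A minor technical point is that for non-integer $\beta$ the function $f_0$ is only smooth on $(0,\infty)$; this is harmless because $A = e^{3h}$ has spectrum bounded away from $0$, so one may modify $f_0$ near the origin without changing any functional-calculus value $f_0(uA)$, and the integrability condition already controls the small-$u$ regime. I note finally that the identity also admits a self-contained derivation by expanding each $\rho_j$ in the Fourier basis $u^m$ and reducing to scalar Beta-type integrals, as in \cite{Khalkhali-Motadelro-Sadeghi2016}; the route above is preferable since it exhibits the result transparently as an instance of the general rearrangement lemma.
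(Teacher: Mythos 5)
Your proposal is correct and follows essentially the same route as the paper: substitute $u=t^{2/3}$ (equivalently $u=w^{3/2}$), take $A=k^6$ with $f_0(x)=x^{\beta}(1+x^{2/3})^{-m_0}$ and $f_j(x)=(1+x^{2/3})^{-m_j}$, and apply Proposition \ref{leschrearrangemnet}, tracking the Jacobian constants and the power $A^{-\beta-1}=k^{2-4\sum m_j}$. Your bookkeeping of the exponent $\beta=\tfrac23\bigl(\sum_j m_j-2\bigr)$ and the cancellation $\tfrac23\cdot\tfrac32=1$ is accurate (the paper's displayed exponent $-4/3+\tfrac32\sum m_j$ appears to be a typo for $-4/3+\tfrac23\sum m_j$, which is what you use), and your extra remarks on the integrability hypotheses and on modifying $f_0$ near the origin are sensible additions rather than deviations.
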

\begin{proof}
Let $u$ be $t^{2/3}$. 
Then we have  $b_0=(1+(tA)^{2/3})^{-1}$ where $A=k^6=e^{3h}$, and it is enough to consider the following functions;  
\begin{equation*}
\begin{aligned}
f_0(x):=x^{-4/3+3/2\sum_{j=0}^{p}m_j}(1+x^{2/3})^{-m_0},\qquad f_{j}(x):=(1+x^{2/3})^{-m_j}, \quad j=1,...,p.
\end{aligned}
\end{equation*}
If we set  $F_{m_0,\cdots,m_p}(s_1,..., s_p)=F (s_1,s_1s_2,\cdots, s_1\cdots s_p)$, by Proposition \ref{leschrearrangemnet}, the result is proven. 
\end{proof}
For instance
\begin{align*}
F_{1,1}(s_1)&=\frac{\pi }{s_1^{\frac23}+\sqrt[3]{s_1}},\qquad 
F_{2,1}(s_1)=\frac{\pi  \left(\sqrt[3]{s_1}+2\right)}{2 \left(\sqrt[3]{s_1}+1\right)^2 \sqrt[3]{s_1}},\\
F_{1,1,1}(s_1,s_2)&=\frac{\pi  \left(\sqrt[3]{s_1} \left(\sqrt[3]{s_2}+1\right)+1\right)}{\left(\sqrt[3]{s_1}+1\right) s_1 \left(\sqrt[3]{s_2}+1\right) \sqrt[3]{s_2} \left(\sqrt[3]{s_1} \sqrt[3]{s_2}+1\right)},\\
F_{2,1,1}(s_1,s_2)&=\frac{\pi  \left(\left(\sqrt[3]{s_1}+2\right) \sqrt[3]{s_1} \left(\sqrt[3]{s_2}+1\right) \left(\sqrt[3]{s_1} \sqrt[3]{s_2}+2\right)+2\right)}{2 \left(\sqrt[3]{s_1}+1\right)^2 s_1 \left(\sqrt[3]{s_2}+1\right) \sqrt[3]{s_2} \left(\sqrt[3]{s_1} \sqrt[3]{s_2}+1\right)^2}.
\end{align*}
The complete list of these functions can be found in Appendix \ref{appenfunctionsfromrearrange}.

All the $\rho_j$'s appeared in our computations are multiples of $\delta_j(k)$ or $\delta_j^2(k)$. 
We want to write all $\rho_j$'s in terms of $\log k$. 
To perform this step, using the expansional formula applied in  \cite[section 6.1]{Connes-Moscovici2014}, we find the corresponding formula  
\begin{equation*}
\begin{aligned}
&k^{-1}\delta_j(k)=f(\Delta)(\delta_j(\log k)),\\
&k^{-1}\delta_j^2(k)=f(\Delta)(\delta_j^2(\log k))+2 g(\Delta_{(1)},\Delta_{(2)})(\delta_j(\log k) \cdot \delta_j(\log k)),
\end{aligned}
\end{equation*}
where,
$$f(x)=\int_0^1 x^{s/6}ds=\frac{6(x^{1/6}-1)}{\log x },$$
$$g(x,y)=\int_0^1\int_0^s x^{s/6}y^{t/6}dtds=\frac{36\big(x^{1/6}((y^{1/6}-1)\log x-\log y)+\log y \big)}{\log x \log y (\log x +\log y )}.$$
And finally, the result is rewritten in terms of $\nabla:=\log\Delta=-\frac32[h,\cdot]$.
\begin{theorem}\label{ncconformalscalarcurvature}
For the noncommutative three tori $\nctorus$ equipped with a conformally flat  metric $g=e^{-2h}\operatorname{I}_3$, the scalar curvature $R$ is given by 
$$R=a_2(\tilde{\lap}_{0,h})=\frac{k^{-2}}{\pi^{3/2}}\Big(K(\nabla)\left(\lap (\log k)\right)+H(\nabla_{(1)},\nabla_{(2)}) \left({\textstyle{\sum}\, }\delta_j(\log k)\cdot \delta_j(\log k)\right)\Big),$$
where $\lap (x)=\sum_{j=1}^3\delta_j^2(x)$, $k=e^{h/2}$. 
The one variable function $K$ is given by
\begin{equation}\label{Kfunctionconformal}
K(s)=\frac{1-e^{s/3}}{s(e^{s/6}+e^{s/2})},
\end{equation}
and the two variable function $H$ is given by
\begin{equation}
H(s,t)
=-\frac{3  \left(\left(e^{s/3}+3\right) s \left(e^{t/3}-1\right)-\left(e^{s/3}-1\right) \left(3 e^{t/3}+1\right) t\right)}{s t (s+t) e^{\frac{1}{6} (s+t)}\left(e^{(s+t)/3}+1\right) }.
\end{equation}
\qed
\end{theorem}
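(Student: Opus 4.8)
The plan is to compute $a_2(\tilde{\lap}_{0,h})$ directly from the integral representation \eqref{scalarcurvature2} using the symbolic machinery assembled in Section \ref{Pseudoandspectral}. First I would take the homogeneous components $a_2,a_1,a_0$ of the symbol of $\tilde{\lap}_{0,h}$ supplied by Lemma \ref{symboloflap0noncon} and substitute them into the recursion \eqref{bjs}. This yields $b_0(\xi,\lambda)=(k^4|\xi|^2-\lambda)^{-1}$, then an explicit $b_1(\xi,\lambda)$ that is odd in $\xi$, and finally $b_2(\xi,\lambda)$ written as a noncommutative polynomial in $b_0$, the factors $\xi_i$, and the derivatives $\delta_i(k^3),\delta_i(k^{-2}),\delta_i^2(k^3)$ appearing in $a_1$ and $a_0$.

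Next I would set $\lambda=-1$ and evaluate $\frac{1}{8\pi^{7/2}}\int_{\mathbb{R}^3}b_2(\xi,-1)\,d\xi$ by passing to spherical coordinates. The angular integration annihilates every monomial of odd total degree in $\xi$, so only the even-parity terms of $b_2$ survive; the remaining angular averages of $\xi_i\xi_j$ and $\xi_i\xi_j\xi_k\xi_l$ are proportional to the rotation-invariant tensors, which collapses the index structure onto the two invariant combinations $\lap(\log k)=\sum_j\delta_j^2(\log k)$ and $\sum_j\delta_j(\log k)\cdot\delta_j(\log k)$. After the substitution $u=|\xi|^2$ each surviving monomial becomes a radial integral of the form $\int_0^\infty b_0^{m_0}\rho_1 b_0^{m_1}\cdots\rho_p b_0^{m_p}\,u^{-3/2+\sum m_j}\,du$ with $b_0=(1+k^4u)^{-1}$, which is exactly the shape handled by the rearrangement lemma.

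Each such radial integral is then evaluated by Corollary \ref{rearrangecon}, producing $k^{2-4\sum m_j}F_{m_0,\dots,m_p}(\Delta_{(1)},\dots,\Delta_{(p)})$ applied to the corresponding product of the $\rho_j$'s, where $\Delta(x)=k^{-6}xk^6$. To reach the stated closed form I would rewrite every $\rho_j$ in terms of $\log k$ using the expansional identities for $k^{-1}\delta_j(k)$ and $k^{-1}\delta_j^2(k)$ recorded above; the first-order identity introduces the one-variable function $f$, while the second-order identity also produces the two-variable function $g$, which is precisely the source of the term multiplying $\sum_j\delta_j(\log k)\cdot\delta_j(\log k)$. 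Collecting the single-derivative contributions and re-expressing the modular variables through $\nabla=\log\Delta=-\tfrac32[h,\cdot]$ yields the one-variable function $K$ of \eqref{Kfunctionconformal}, and collecting the product contributions yields the two-variable function $H$.

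The hard part will be the final assembly. After the rearrangement lemma one is confronted with a large sum of multivariable spectral functions $F_{m_0,\dots,m_p}$, each a ratio of cube-root-exponentials, composed with the functions $f$ and $g$, and one must show that this sum collapses to the compact formulas for $K$ and $H$. This requires substantial but elementary simplification of rational-exponential functions of the modular variables $s$ and $t$, together with meticulous tracking of operator ordering so that the noncommuting factors $\delta_j(\log k)$ sit correctly relative to the operators $\Delta_{(1)},\Delta_{(2)}$. Demonstrating that all spurious terms cancel and that the coefficients combine into exactly $K$ and $H$ is where essentially all the labour lies; every preceding step is a routine application of the pseudodifferential calculus and the rearrangement lemma already established.
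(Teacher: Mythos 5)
Your proposal is correct and follows essentially the same route as the paper: symbol of $\tilde{\lap}_{0,h}$ from Lemma \ref{symboloflap0noncon}, the recursion \eqref{bjs} for $b_2$, reduction of \eqref{scalarcurvature2} to radial integrals after angular integration, Corollary \ref{rearrangecon} with $\Delta(x)=k^{-6}xk^{6}$, and the expansional identities converting $k^{-1}\delta_j(k)$ and $k^{-1}\delta_j^2(k)$ into expressions in $\delta_j(\log k)$ before passing to $\nabla=\log\Delta$. The paper likewise treats the final collapse of the rearrangement-lemma functions into $K$ and $H$ as a large but mechanical symbolic computation (carried out by computer algebra), so your assessment of where the labour lies is accurate.
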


The classical limit $\theta\to 0$, is obtained by taking the limits of $K(s)$ and $H(s, t)$ as $t,s\to 0$.
We obtain 
\begin{equation*}
\lim_{s\to 0}K(s)=-\frac16,\qquad 
\lim_{(s,t)\to (0,0)}H(s,t)=\frac16.
\end{equation*}
This implies that the scalar curvature $R$   approaches  the limit 
$$-\frac{k^{-2}}{24\pi^{3/2}}\sum (2\delta_j^2(h)-\delta_j(h)\delta_j(h)),$$
as $\theta\to 0$.
It matches with  the scalar curvature $-2e^{2h}\sum (-2h_{jj}+h_j^2)$ for the three torus with the metric $g=e^{-2h}(dx^2 + dy^2 + dz^2)$ up to the factor of $k^648\pi^{3/2}$ due to our convention (see Remark \ref{thediffofcoeff}).

\subsection{The Ricci density}
In this section, we shall compute the Ricci density of $\nctorus$ equipped with a conformally flat metric.
To this end,  we first need to find the term \eqref{secondtermintegralform} for $\tilde{\lap}_{1,h}$ which is anti-unitarily equivalent to the Laplacian on 1-forms.
We shall follow all the computational steps listed in the previous section to compute the scalar curvature, with one difference that the symbols are matrix valued in this case and the results will be in the matrix form. 
We start with the symbol of $\tilde{\lap}_{1,h}$.
\begin{lemma}
If we denote the symbol of $\tilde{\lap}_{1,h}$ by
$\sigma(\tilde{\lap}_{1,h})=a_{2} + a_{1} + a_{0},$
then we have
\begin{equation*}
\begin{aligned}
a_{2}=&(k^{4}\xi_{1}^{2}+k^{4}\xi_{2}^{2}+k^{4}\xi_{3}^{2})\mathrm{I}_{3},\\
a_{1}=&
\begin{pmatrix}
k^{5}\delta_{1}(k^{-1}) + k^{-1}\delta_{1}(k^{5}) & -k\delta_{2}(k^{4})k^{-1} & -k\delta_{3}(k^{4})k^{-1}\\
k^{-1}\delta_{2}(k^{4})k & k^{3}\delta_{1}(k) + k\delta_{1}(k^{3}) & 0\\
k^{-1}\delta_{3}(k^{4})k & 0 & k^{3}\delta_{1}(k) + k\delta_{1}(k^{3})  
\end{pmatrix}
\xi_{1}\\
&+
\begin{pmatrix}
k^{3}\delta_{2}(k)+k\delta_{2}(k^{3}) & k^{-1}\delta_{1}(k^{4})k & 0\\
-k\delta_{1}(k^{4})k^{-1} & k^{5}\delta_{2}(k^{-1})+k^{-1}\delta_{2}(k^{5}) & -k\delta_{3}(k^{4})k^{-1}\\
0 & k^{-1}\delta_{3}(k^{4})k & k^{3}\delta_{2}(k) + k\delta_{2}(k^{3}) 
\end{pmatrix}
\xi_{2}\\
&+
\begin{pmatrix}
k^{3}\delta_{3}(k)+k\delta_{3}(k^{3}) & 0 & k^{-1}\delta_{1}(k^{4})k\\
0 & k^{3}\delta_{3}k+k\delta_{3}(k^{3}) & k^{-1}\delta_{2}(k^{4})k\\
-k\delta_{1}(k^{4})k^{-1} & -k\delta_{2}(k^{4})k^{-1} & k^{5}\delta_{3}(k^{-1})+k^{-1}\delta_{3}(k^{5})
\end{pmatrix} 
\xi_{3},\\
a_{0}=&\sum_{1\leqslant i,j \leqslant 3}\Big(k^{-1}\delta_{i}(k^{6}\delta_{j}(k^{-1}))-k\delta_j(k^2 \delta_i(k))\Big)E_{ij}+\sum_{j=1}^{3}k\delta_j (k^2 \delta_j(k))\mathrm{I}_3.
\end{aligned}
\end{equation*}
Here $E_{ij}$'s are the matrix units.\qed
\end{lemma}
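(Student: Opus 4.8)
The plan is to compute $\sigma(\tilde{\lap}_{1,h})$ entry by entry, treating $\tilde{\lap}_{1,h}$ as a $3\times 3$ matrix of scalar differential operators read directly off the explicit formula for $\tilde{\lap}_{1,h}(a_1,a_2,a_3)$ displayed above. Each such entry is a composition of left multiplication operators $L_{k^a}$ and the derivations $\delta_j$. Since the symbol of $L_f$ is $f$ (an order-zero symbol, constant in $\xi$) and the symbol of $\delta_j$ is $\xi_j$ in the normalization already fixed by Lemma \ref{symboloflap0noncon}, I would obtain the symbol of each entry by the product formula of Theorem \ref{product}, which for differential operators terminates after finitely many terms and is equivalent to repeatedly applying the Leibniz rule $\delta_j(fa)=\delta_j(f)a+f\delta_j(a)$ and then substituting $\delta_j\mapsto \xi_j$. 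This is exactly the recipe used implicitly in Lemma \ref{symboloflap0noncon}, now carried out for a matrix-valued operator; collecting terms by their degree in $\xi$ produces the homogeneous pieces $a_2$, $a_1$, $a_0$.

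First I would extract the leading, degree-two, part. In the $i$-th diagonal entry the second-order contributions are the two transverse ``Laplacian-type'' compositions $k\delta_\ell k^2\delta_\ell k$ with $\ell\neq i$, each of leading symbol $k^4\xi_\ell^2$, together with the $j=i$ term of $\sum_j k^{-1}\delta_i k^6\delta_j k^{-1}$, of leading symbol $k^4\xi_i^2$; summing over $\ell$ yields $k^4(\xi_1^2+\xi_2^2+\xi_3^2)$ on the diagonal. The crucial point is that the off-diagonal second-order terms cancel in pairs: a cross term $-k\delta_i k^2\delta_j k\, a_j$ has leading symbol $-k^4\xi_i\xi_j$, while the $j\neq i$ term of $k^{-1}\delta_i k^6\delta_j k^{-1}a_j$ has leading symbol $+k^4\xi_i\xi_j$, so they annihilate each other. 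This gives $a_2=k^4(\xi_1^2+\xi_2^2+\xi_3^2)\mathrm{I}_3$.

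Next I would assemble $a_1$ and $a_0$ from the Leibniz corrections. The degree-one coefficients arise from letting exactly one of the two derivations in each monomial land on an interior power of $k$ rather than on $a_j$; gathering these over all entries produces the three coefficient matrices of $\xi_1,\xi_2,\xi_3$ claimed in the statement. The degree-zero part $a_0$ collects the terms in which both derivations act on the interior powers of $k$, giving the $E_{ij}$-expansion together with the diagonal $\sum_j k\delta_j(k^2\delta_j(k))\mathrm{I}_3$ piece. Throughout I would keep in mind that $k$ and $\delta_j(k)$ do not commute, so the naive power rule is unavailable; instead I would reduce each expression to the stated normal form using the noncommutative Leibniz identities $\delta_j(k^n)=\sum_{a+b=n-1}k^a\delta_j(k)k^b$ and $\delta_j(k^{-2})=-k^{-2}\delta_j(k^2)k^{-2}$, exactly as one checks the equivalence of the two forms of $a_1$ appearing in Lemma \ref{symboloflap0noncon}.

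The main obstacle is organizational rather than conceptual: there are nine operator entries, each expanding into several noncommutative monomials at orders one and zero, and the resulting expressions admit many algebraically equivalent rewritings. The work therefore lies in reducing each entry consistently to a single normal form and matching it against the compact matrices in the statement, while ensuring that the contributions of the coupling term $\sum_j k^{-1}\delta_i k^6\delta_j k^{-1}$ are correctly distributed between the diagonal and off-diagonal positions at every order. A useful internal check is that the off-diagonal entries of $a_1$ must be built solely from the cross terms $-k\delta_i k^2\delta_j k$ and $k^{-1}\delta_i k^6\delta_j k^{-1}$ with $i\neq j$, which already constrains their shape up to the Leibniz bookkeeping.
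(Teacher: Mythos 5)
Your proposal is correct and matches the paper's (implicit) argument: the lemma is proved by a direct symbol computation, reading each entry of $\tilde{\lap}_{1,h}$ off the displayed formula, expanding with the Leibniz rule, substituting $\delta_j\mapsto\xi_j$, and collecting by homogeneity --- including the key cancellation of the off-diagonal second-order cross terms between $-k\delta_ik^2\delta_jk$ and $k^{-1}\delta_ik^6\delta_jk^{-1}$, and the identities such as $k^5\delta_j(k^{-1})=-k^4\delta_j(k)k^{-1}$ needed to reach the stated normal forms like $-k\delta_j(k^4)k^{-1}$.
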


To compute  $b_2(\xi,-1)$,   we use the symbol of $\tilde{\lap}_{1,h}$ and \eqref{bjs}.  Then \eqref{secondtermintegralform1} gives the second heat trace density $a_2(\tilde{\lap}_{1,h})$.
\begin{proposition}\label{conformalalpha-1}
With notation as above, we have
\begin{equation*}
\begin{aligned}
\pi^{3/2}k^2 a_2(\tilde{\lap}_{1,h})&=\left(-\frac{1}{2}K(\nabla)\left(\lap (\log k)\right ) + T(\nabla_{(1)},\nabla_{(2)})\left({\textstyle{\sum}}\, \delta_{i}(\log k)\cdot \delta_{i}(\log k) \right)\right)  {\rm I}_{3}\\
&+\sum_{i,j=1}^{3}\left(F(\nabla)\left( \delta_{i} \delta_{j}(\log k)\right) + W(\nabla_{(1)},\nabla_{(2)})\left(\delta_{i}(\log k) \cdot \delta_{j}(\log k)\right)\right.\\
&\left.\qquad\qquad\qquad\qquad\qquad\qquad+S(\nabla_{(1)}, \nabla_{(2)})\left([\delta_{j}(\log k), \delta_{i}(\log k)]\right)\right) E_{ij},
\end{aligned}
\end{equation*}
where  $K$ is the function in \eqref{Kfunctionconformal}, and the other functions are given as follow:
\begin{align*}
F(s)&=\frac{e^{-\frac{s}2}(e^s-1)}{2(1+e^{\frac{s}3})s},\\
T(s,t)&=\frac{3s (1- e^{\frac{t}3}) (e^{\frac{2s+t}3}- e^{\frac{s+t}{3}}-e^{\frac{2 s}{3}}-1)+3t (1-e^{\frac{s}3}) (e^{ \frac{s+2 t}3}+e^{\frac{s}3}+e^{\frac{t}3}-1)}{s t(s+t)e^{\scriptstyle \frac{3s+t}6}   (e^{\frac{(s+t)}3}+1 ) },\\
W(s,t)&= \frac{6    (e^{\frac{s+t}{3}}+e^{\frac{2 (s+t)}{3}}+1 )  (s e^{\frac{s+t}{3}}-e^{\frac{s}3} (s+t)+t )}{s t (s+t) e^{\frac{s+t}{2}} (e^{\frac{s+t}{3}}+1 ) },\\
S(s,t)&=\hspace{-0.1cm}\frac{3 s  (e^{\frac{t}3}-1 )  (2 e^{\frac{s+t}{3}}+e^{\frac{ 2s+2t}{3}}-e^{\frac{2 s+t}{3} }+1 )\hspace{-0.1cm}- 3t(e^{\frac{s}3}-1 )   (2 e^{\frac{s+2t}{3}}+e^{\frac{2s+3t}{3}}\hspace{-0.1cm}-e^{\frac{s+t}3}+ e^{\frac{t}3} ) }{s t (s+t) e^{\frac{1}{2} (s+t)}(e^{\frac{s+t}{3}}+1 ) }.
\end{align*}\qed
\end{proposition}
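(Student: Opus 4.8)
The strategy is to run the pseudodifferential machinery of Section~\ref{Pseudoandspectral} exactly as in the proof of Theorem~\ref{ncconformalscalarcurvature}, the only genuinely new feature being that every symbol now takes values in $\Ai\otimes M_3(\mathbb{C})$. I would start from the symbol $\sigma(\tilde\lap_{1,h})=a_2+a_1+a_0$ recorded in the preceding lemma and feed it into the recursion \eqref{bjs}. The crucial simplification is that the leading symbol is scalar, $a_2=(k^4|\xi|^2)\mathrm{I}_3$, so that $b_0=(k^4|\xi|^2-\lambda)^{-1}\mathrm{I}_3$ is central in the matrix slots and the modular operator ends up acting only on the $\Ai$-factors. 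Computing $b_1$ and then $b_2$ from \eqref{bjs} produces a sum of words of the form $b_0^{m_0}\rho_1 b_0^{m_1}\cdots\rho_l b_0^{m_p}$, where the $\rho$'s are the entries of $a_1,a_0$ and their $\partial$- and $\delta$-derivatives; up to powers of $k$ these entries are either $\delta_i(k)$, $\delta_i\delta_j(k)$, or products $\delta_i(k)\,\delta_j(k)$, and each word carries a matrix unit $E_{ij}$ or a multiple of $\mathrm{I}_3$.

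Next I would substitute $\lambda=-1$ and carry out the momentum integral \eqref{secondtermintegralform} in spherical coordinates $\xi=r\omega$. With $u=r^2$ the radial integrals take precisely the shape $\int_0^\infty b_0^{m_0}\rho_1 b_0^{m_1}\cdots\rho_l b_0^{m_p}\,u^{(-3/2+\sum m_j)}\,du$ handled by Corollary~\ref{rearrangecon} (note that $b_0=(1+k^4 u)^{-1}$ is the same here as in the scalar case, since $a_2$ agrees), while the angular integrals over $S^2$ reduce to the standard moments of the $\omega_i$'s. These angular averages are what collapse the tensor index structure: odd moments vanish and an integrand carrying $\omega_i\omega_j$ contributes only when matched against the derivative indices, turning the raw double sums into the Laplacian $\lap(\log k)=\sum_j\delta_j^2(\log k)$ on the $\mathrm{I}_3$-component and into $\delta_i\delta_j(\log k)$ and $\delta_i(\log k)\,\delta_j(\log k)$ on the $E_{ij}$-component. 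Applying Corollary~\ref{rearrangecon} then replaces each radial word by $k^{2-4\sum m_j}F_{m_0,\dots,m_p}(\Delta_{(1)},\dots,\Delta_{(p)})$ acting on the corresponding product of $\rho$'s, with the explicit functions $F_{m_0,\dots,m_p}$ listed in Appendix~\ref{appenfunctionsfromrearrange}.

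To reach the stated closed form I would rewrite everything in terms of $\log k$ via the expansional identities $k^{-1}\delta_j(k)=f(\Delta)(\delta_j(\log k))$ and $k^{-1}\delta_j^2(k)=f(\Delta)(\delta_j^2(\log k))+2g(\Delta_{(1)},\Delta_{(2)})(\delta_j(\log k)\cdot\delta_j(\log k))$ of Section~\ref{conformalmetrics}, and finally pass to $\nabla=\log\Delta$. At this stage the $\mathrm{I}_3$-part assembles into $-\tfrac12 K(\nabla)(\lap(\log k))+T(\nabla_{(1)},\nabla_{(2)})(\sum_i\delta_i(\log k)\cdot\delta_i(\log k))$, reusing the one-variable function $K$ of \eqref{Kfunctionconformal}, while the $E_{ij}$-part splits into a linear piece $F(\nabla)(\delta_i\delta_j(\log k))$ and a quadratic piece in $\delta_i(\log k)\,\delta_j(\log k)$. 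The quadratic piece must then be reorganized into a symmetric part and a commutator part: symmetrizing the two-variable modular function under swapping its two tensor legs gives $W(\nabla_{(1)},\nabla_{(2)})$, and the antisymmetric remainder, regrouped as $[\delta_j(\log k),\delta_i(\log k)]$, gives $S(\nabla_{(1)},\nabla_{(2)})$. Matching these symbolic combinations against the asserted closed forms of $F$, $T$, $W$, $S$ is a direct functional-calculus simplification.

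The main obstacle is the sheer combinatorial bookkeeping of the matrix-valued words together with the correct placement of the modular operators $\Delta_{(1)},\Delta_{(2)}$ relative to the matrix units $E_{ij}$. Because the off-diagonal entries of $a_1$ conjugate $k$-powers asymmetrically from the two sides (for example $k^{-1}\delta_i(k^4)k$ versus $k\delta_i(k^4)k^{-1}$), the ordering of the $\rho$'s is not symmetric in $i,j$, and it is exactly this asymmetry that feeds the commutator function $S$. Tracking which modular slot acts on which factor, and resumming the many $F_{m_0,\dots,m_p}$-contributions into the compact functions $K,F,T,W,S$, is the delicate and computation-heavy heart of the argument; the individual steps are routine applications of \eqref{bjs}, Corollary~\ref{rearrangecon}, and the expansional formulas, so the difficulty is organizational rather than conceptual.
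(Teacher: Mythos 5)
Your proposal is correct and follows essentially the same route the paper takes: the paper proves this proposition (implicitly, the details being relegated to a Mathematica file and mirrored in Appendix A for the non-conformal case) by feeding the matrix-valued symbol of $\tilde{\lap}_{1,h}$ into the recursion \eqref{bjs}, setting $\lambda=-1$, integrating in spherical coordinates, applying Corollary \ref{rearrangecon}, and converting to $\log k$ and $\nabla=\log\Delta$ via the expansional identities, exactly as you describe. Your observations that the scalar leading symbol keeps the modular operators acting only on the $\Ai$-factors, and that the left/right asymmetry of the off-diagonal entries of $a_1$ is what produces the commutator term with the function $S$, match the structure of the paper's computation.
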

Using  definitions \ref{ncscalar} and \ref{ncricci},  Theorem \ref{conformalalpha-1}, and Proposition \ref{ncconformalscalarcurvature}, we can compute the Ricci density of the noncommutative three tori $\nctorus$ equipped with a conformally flat  metric $g=e^{-2h}\operatorname{I}_3$.
\begin{theorem}\label{ncconformalRiccicurvature}
The Ricci density of $\nctorus$ equipped with the conformally flat metric $g=e^{-2h}\operatorname{I}_3$ is given by
\begin{equation*}
\begin{aligned}
\ricden=&\pi^{-\frac32}k^{-2}\left(\frac{3}{2}K(\nabla)\left(\lap (\log k)\right ) + (H-T)(\nabla_{(1)},\nabla_{(2)})\left(\textstyle{\sum}\delta_{\ell}(\log k)\cdot\delta_{\ell}(\log k) \right)\right)  {\rm I}_{3}\\
&-\pi^{-\frac32}k^{-2}\sum\Big(F(\nabla)\left( \delta_{i} \delta_{j}(\log k)\right) + W(\nabla_{(1)},\nabla_{(2)})\left(\delta_{i}(\log k)\cdot \delta_{j}(\log k)\right)\\
&\qquad \qquad\qquad\quad +S(\nabla_{(1)}, \nabla_{(2)})\left([\delta_{j}(\log k), \delta_{i}(\log k)]\right)\Big) E_{ij}.
\end{aligned}
\end{equation*}
Here $k=e^{h/2}$, and $\lap(a)=\sum \delta_j^2(a)$ denotes the flat Laplacian.\qed
\end{theorem}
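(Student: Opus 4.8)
The plan is to read the Ricci density straight off Definition \ref{ncricci}, which expresses it as $\ricden = \scalar\otimes\mathrm{I}_3 - a_2(\tilde{\lap}_{1,h})$, and then substitute the two densities already in hand: the scalar curvature $\scalar = a_2(\tilde{\lap}_{0,h})$ from Theorem \ref{ncconformalscalarcurvature} and the heat density $a_2(\tilde{\lap}_{1,h})$ from Proposition \ref{conformalalpha-1}. Since both are reported with the common prefactor $\pi^{-3/2}k^{-2}$, the first step is to pull this factor out front and work only with the bracketed expressions, restoring $\pi^{-3/2}k^{-2}$ at the very end.

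With the prefactor removed, the computation splits according to the matrix structure. The scalar part (the coefficient of $\mathrm{I}_3$) receives the contribution $K(\nabla)(\lap(\log k)) + H(\nabla_{(1)},\nabla_{(2)})(\sum_\ell \delta_\ell(\log k)\cdot\delta_\ell(\log k))$ from $\scalar$ and, with an overall minus sign, the contribution $-\tfrac12 K(\nabla)(\lap(\log k)) + T(\nabla_{(1)},\nabla_{(2)})(\sum_\ell \delta_\ell(\log k)\cdot\delta_\ell(\log k))$ from $a_2(\tilde{\lap}_{1,h})$. The two $K$-terms then add with coefficient $1 - (-\tfrac12) = \tfrac32$, producing $\tfrac32 K(\nabla)(\lap(\log k))$, while the two quadratic-in-$\delta(\log k)$ terms combine into $(H-T)(\nabla_{(1)},\nabla_{(2)})(\sum_\ell \delta_\ell(\log k)\cdot\delta_\ell(\log k))$, which is exactly the $\mathrm{I}_3$ block asserted.

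The off-diagonal (matrix-unit) part is immediate: these terms appear only in $a_2(\tilde{\lap}_{1,h})$, so subtracting carries the whole sum $\sum_{i,j}\big(F(\nabla)(\delta_i\delta_j(\log k)) + W(\nabla_{(1)},\nabla_{(2)})(\delta_i(\log k)\cdot\delta_j(\log k)) + S(\nabla_{(1)},\nabla_{(2)})([\delta_j(\log k),\delta_i(\log k)])\big)E_{ij}$ over with a single minus sign, giving precisely the second block of the statement. Restoring the prefactor $\pi^{-3/2}k^{-2}$ completes the identification, so the theorem is a purely algebraic consequence of the two preceding results.

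The only point requiring genuine care — and the step I would treat as the main obstacle — is the bookkeeping around the anti-unitary equivalences, since the scalar density was computed through $R_{k^3}J$ and the $1$-form density through $R_k J$. One must check that the tilde-computed densities are the correct representatives to feed into $\ricden = \scalar\otimes\mathrm{I}_3 - a_2(\lap_{1,h})$. Because the Ricci functional is defined through the tracial state $\varphi$ as $\ricfun(F) = \varphi(\tr(F)a_2(\lap_{0,h})) - \varphi(F a_2(\lap_{1,h}))$, and $\varphi$ is invariant under the conjugations induced by these anti-unitaries as recorded in Remark \ref{thediffofcoeff}, the values obtained with $\tilde{\lap}_{0,h}$ and $\tilde{\lap}_{1,h}$ reproduce the intrinsic densities and the substitution is legitimate. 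Once this is verified, the result follows by the combination described above.
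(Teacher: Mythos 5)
Your proposal is correct and coincides with the paper's own (essentially omitted) argument: the theorem is obtained exactly by substituting Theorem \ref{ncconformalscalarcurvature} and Proposition \ref{conformalalpha-1} into Definition \ref{ncricci}, combining the $K$-coefficients as $1-(-\tfrac12)=\tfrac32$, forming $H-T$ on the $\mathrm{I}_3$ block, and negating the $E_{ij}$ block. Your additional check that the tilde-densities are the legitimate representatives is the same bookkeeping the paper delegates to Remark \ref{thediffofcoeff}, so nothing is missing.
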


\begin{remark}
To check the result with the commutative case, we need to find the following limits:
\begin{equation*}
\lim_{s\to 0}F(s)=\frac14,
\qquad \lim_{(s,t)\to(0,0)}T(s,t)=-\frac{1}{3}, \qquad \lim_{(s,t)\to(0,0)}W(s,t)=\frac12.
\end{equation*}
Since in the commutative case the commutator term $[\delta_{j}(\log k), \delta_{i}(\log k)]$ on which $S$ acts, automatically vanishes,
we find that the $(i,j)^{th}$ entry of the Ricci density for $\theta=0$ is given by
\begin{equation}\label{ricconformalasthetagoestozero}
-\frac{k^{-2}}{8\pi^{3/2}}\left(\delta_{ij}\big(\sum_{\ell=1}^{3}\delta^2_{\ell}(h)-\delta_{\ell}(h)^2\big)+\delta_{i}(h)\delta_{j}(h)+\delta_{i}(\delta_{j}(h))\right),
\end{equation}
\noindent where the $\delta_{ij}$ denotes the Kronecker delta. 
On the other hand, a direct computation in the commutative case for the metric $g=e^{-2h}\operatorname{I}_3$ gives the $(i,j)^{th}$ component of the Ricci operator as
\begin{equation*}
e^{2h}\left(\delta_{ij}(\sum_{\ell=1}^3 h_{\ell\ell}-h_\ell{}^2)+h_ih_j+h_{ij}\right),
\end{equation*} 
which matches with the corresponding Ricci density  in \eqref{ricconformalasthetagoestozero} after taking into the account the Remark \ref{thediffofcoeff}.
\end{remark}

\section{Ricci density for non-conformal perturbations} \label{nonconformalmetrics}

In this section we shall compute the Ricci curvature for a metric on the noncommutative three torus which is an analogue of the metric  
\begin{equation}\label{metric_2}
e^{-2h}(dx^2+dy^2)+dz^2,
\end{equation}
for some $h\in C^\infty(\mathbb{T}^3)$ in the classical case.
The inner products on functions, 1-forms and 2-forms for a torus equipped with this metric are given as follows:
\begin{equation*}
\langle f_1, f_2 \rangle = \int_{\mathbb{T}^3} f_1\overline{f_2}e^{-2h}dx dy dz,
\end{equation*}
for all $f_1,f_2 \in \Omega^0(\mathbb{T}^3),$
\begin{equation*}
\langle \alpha, \beta\rangle= \int_{\mathbb{T}^3} \left(\alpha_1\overline{\beta_1} + \alpha_2\overline{\beta_2} + \alpha_3\overline{\beta_3}e^{-2h} \right)dxdydz, 
\end{equation*}
for all $\alpha=(\alpha_1,\alpha_2,\alpha_3),\beta=(\beta_1,\beta_2,\beta_3)\in \Omega^1(\mathbb{T}^3)$, and
\begin{equation*}
\langle \xi, \eta\rangle = \int_{\mathbb{T}^3} \left( \xi_1 \overline{\eta_1}e^{2h} + \xi_2\overline{\eta_2}+\xi_3\overline{\eta_3}\right) dx dy dz,
\end{equation*}
for all $\xi = (\xi_1,\xi_2, \xi_3), \eta = (\eta_1, \eta_2, \eta_3)\in \Omega^2(\mathbb{T}^3)$.

Let $k=e^h$ for $h\in \Ai$.
Motivated by the classical case,  we denote by $\mathcal{H}_h^{(0)}$ the Hilbert space given by the GNS construction of $\A$ with respect to the positive linear functional 
\begin{equation*}
\varphi_0(a)=\varphi(ak^{-2}).
\end{equation*}
For 1-forms, we denote by $\mathcal{H}_h^{(1)}$ the Hilbert space, which is the completion of $\Omega^1\nctorus$ with respect to the inner product given by
\begin{equation*}
\langle a, b\rangle= \varphi\left(b_1^{*}a_1+b_2^{*}a_2+b_3^{*}a_3k^{-2}\right).
\end{equation*} 
For 2-forms, we denote by $\mathcal{H}_h^{(2)}$ the Hilbert space, which is the completion of $\Omega^2\nctorus$ with respect to the inner product given by 
\begin{equation*}
\langle a, b\rangle = \varphi\left(b_1^{*}a_1 k^2 + b_2^{*}a_2 + b_3^{*}a_3\right).
\end{equation*}

We also need adjoints of de Rham differentials \eqref{deRhamcomplex} with respect to the given metric. 
It can be shown that the adjoint of $d_0$ is given by 
\begin{equation*}
d_0^{*}: b\mapsto (-i)(\delta_1(b_1)k^2+\delta_2(b_2)k^2+\delta_3(b_3)-b_3k^{-2}\delta_3(k^2)),\quad b=(b_1, b_2, b_3)\in \Omega^1\nctorus. 
\end{equation*}
Similarly, the adjoint of  $d_1$ acting on an element $a=\left(a_1, a_2, a_3\right)\in \Omega^2\nctorus$ is given by
\begin{equation*}
d_1^*:a\mapsto \left(i\delta_2(a_1k^2) + i\delta_3(a_3), i\delta_3(a_2)-i\delta_1(a_1k^2), -i\delta_2(a_2)k^2-i\delta_1(a_3)k^2\right).
\end{equation*}

To compute the spectral densities of the Laplacians for these metrics, we will follow the steps presented in section \ref{Pseudoandspectral}. 
By a homogeneity argument, again, the computation  of contour integral can be bypassed by setting $\lambda=-1$;
\begin{equation*}
\frac{1}{(2\pi)^3}\int_{\mathbb{R}^3}\frac{1}{2\pi i}\int b_2(\xi,\lambda)d\lambda d\xi=\frac{1}{8\pi^{7/2}}\int_{\mathbb{R}^3}b_2(\xi,-1)d\xi.
\end{equation*}
Then we have integrals in $\xi$ variable where the dependence of the integrand comes from the powers of $b_0(\xi,-1)=(1+a_2(\xi))^{-1}$ and $\xi_j$. 
To compute these integrals, we first apply a change of variables,
\begin{equation}\label{substitution}
\xi_1=\sqrt{u(1+\eta^2)}\cos\theta,\quad \xi_2=\sqrt{u(1+\eta^2)}\sin\theta,\quad \xi_3=\eta,
\end{equation}  
where the domain of the new variables $(u,\eta,\theta)$ is given by
$$ u\in[0,+\infty),\quad \eta\in(-\infty,+\infty),\quad \theta\in[0,2\pi).$$
The Jacobian of this substitution is $\frac12(1+\eta^2),$ 
and this substitution decomposes $b_0$  to $(1+\eta^2)^{-1}$ multiplied by a noncommutative part which depends only on $u$. More precisely 
\begin{equation*}
b_0(\xi,-1)=(1+k^2\xi_1^2+k^2\xi_2^2+\xi_3^2)^{-1}=(1+\eta^2+u(1+\eta^2)k^2)^{-1}
=\frac{1}{1+\eta^2}b_0(u).
\end{equation*}
Here we denoted $(1+uk^2)^{-1}$ by $b_0(u)$. 
As a result, after applying the substitution, each term of $b_2$ ends up with a triple integral whose two variables $(\eta,\theta)$ can be separated and integrated, without involving any noncommutative terms.
For instance,
\begin{equation*}
\begin{aligned}
&\int_{\mathbb{R}^3} \xi_2^4 \xi_3^2 b_0^3(\xi,-1)\delta_3(k^2)b_0(\xi,-1)\delta_3(k^2)b_0(\xi,-1) d\xi\\
&=\int_0^\infty \int_{-\infty}^{\infty}\int_{0}^{2\pi}  \frac{ u^2\eta^2(1+\eta^2)^2 \sin^4\theta}{(1+\eta^2)^5} b_0^3(u)\delta_3(k^2)b_0(u)\delta_3(k^2)b_0(u) \frac{1}{2}(1 + \eta^2) d\eta d\theta du\\
&=\left(\int_{-\infty}^{\infty} \frac{\eta^2 }{2(1+\eta^2)^2} d\eta\right)  
\left(\int_{0}^{2\pi}  \sin^4\theta d\theta \right)
\int_0^\infty   u^2 b_0^3(u)\delta_3(k^2)b_0(u)\delta_3(k^2)b_0(u) du\\
&= \frac{3 \pi^2}{16} \int_0^\infty   u^2 b_0^3(u)\delta_3(k^2)b_0(u)\delta_3(k^2)b_0(u) du.
\end{aligned}
\end{equation*}

Applying the substitution and integrating out the $\eta$ and $\theta$ variables,  we end up with sums of $u$ integrals in one of the following forms:
\begin{equation*}
\int_0^\infty b_0(u)^{m_0}\rho_1b_0(u)^{m_1}\rho_2\cdots \rho_p b_0(u)^{m_p} u^{-2+\sum m_j}du,
\end{equation*} 
or
\begin{equation*}
\int_0^\infty b_0(u)^{m_0}\rho_1b_0(u)^{m_1}\rho_2\cdots \rho_p b_0(u)^{m_p} u^{-3+\sum m_j}du.
\end{equation*} 
Here we need Proposition \ref{leschrearrangemnet} for  
\begin{equation*}
\begin{aligned}
&f_0(x):=x^{\sum m_j - \nu}(1+x)^{-m_0},\\
&f_j(x):=(1+x)^{-m_j},\quad j=1,...,p,
\end{aligned}
\end{equation*}
and   $a=2h$.  Here $\nu$ is equal to $2$ or $3$.
We then get the following version of the rearrangement lemma.
\begin{corollary}\label{rearrangenoncon}
Let $b_0=(1+uk^2)^{-1}$, $\rho_j\in \Ai$, $m_j\in \mathbb{Z}$, for $j=0,1,2,...,p$, and $\Delta(x)=k^{-2}xk^2$.
Then
\begin{equation*}
\begin{aligned}
&\int_{0}^{\infty}b_0(u)^{m_0}\rho_1 b_0(u)^{m_1}\rho_2b_0(u)^{m_2}\cdots \rho_lb_0(u)^{m_p} u^{(-\nu + \sum m_j)}du\\
&=k^{2(-\sum_{j=0}^{p}m_j+\nu - 1)}F^{[\nu]}_{m_0,m_1,...,m_p}(\Delta_{(1)}, \Delta_{(2)},...,\Delta_{(p)})(\rho_1\cdot \rho_2\cdots \rho_p),
\end{aligned}
\end{equation*}
where
\begin{equation*}
F^{[\nu]}_{m_0,m_1,...,m_p}(s_1,s_2,...,s_p)=
\int_{0}^{\infty}(1+u)^{-m_0}\prod_{j=1}^{p}\Big( u\prod_{h=1}^{j}s_{h} + 1\Big)^{-m_j}u^{(\sum m_j-\nu)}du.
\end{equation*}
\qed
\end{corollary}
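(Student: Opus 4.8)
The plan is to obtain this as a direct consequence of the general rearrangement lemma, Proposition \ref{leschrearrangemnet}, following the same strategy as in the conformal Corollary \ref{rearrangecon}. The situation here is in fact cleaner: since the principal symbol is $k^2\xi_1^2+k^2\xi_2^2+\xi_3^2$, after the substitution \eqref{substitution} the noncommutative factor is already $b_0(u)=(1+uk^2)^{-1}=(1+uA)^{-1}$ with $A:=e^{2h}=k^2$, which is exactly the form demanded by the lemma. Hence, unlike the conformal case, no auxiliary change of variable such as $u=t^{2/3}$ is needed. I would take the self-adjoint element to be $a=2h$, so that $A=e^{a}=k^2$ and the modular operator $\Delta(x)=A^{-1}xA=k^{-2}xk^2$ coincides with the one in the statement.

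First I would fix the scalar functions $f_0(x)=x^{\sum m_j-\nu}(1+x)^{-m_0}$ and $f_j(x)=(1+x)^{-m_j}$ for $1\le j\le p$, as indicated just before the statement. The only algebraic manipulation is to absorb the scalar weight $u^{\sum m_j-\nu}$ into $f_0$. Since $A$ and $b_0(u)$ are both functions of $k$ and hence commute, one has $f_0(uA)=u^{\sum m_j-\nu}A^{\sum m_j-\nu}b_0(u)^{m_0}$, so that $b_0(u)^{m_0}\,u^{\sum m_j-\nu}=A^{\nu-\sum m_j}f_0(uA)$, while $b_0(u)^{m_j}=f_j(uA)$ for $j\ge 1$. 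Pulling the $u$-independent factor $A^{\nu-\sum m_j}=k^{2(\nu-\sum m_j)}$ to the far left, the integrand becomes $A^{\nu-\sum m_j}\,f_0(uA)\,\rho_1\,f_1(uA)\cdots\rho_p\,f_p(uA)$.

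Next I would apply Proposition \ref{leschrearrangemnet} to the remaining integral $\int_0^\infty f_0(uA)\,\rho_1\,f_1(uA)\cdots\rho_p\,f_p(uA)\,du$, which yields $A^{-1}F(\Delta_{(1)},\Delta_{(1)}\Delta_{(2)},\dots,\Delta_{(1)}\cdots\Delta_{(p)})(\rho_1\cdots\rho_p)$ with $F(s_1,\dots,s_p)=\int_0^\infty f_0(u)f_1(us_1)\cdots f_p(us_p)\,du$. Combining the two powers of $A$ gives the claimed prefactor $A^{\nu-\sum m_j-1}=k^{2(-\sum_{j=0}^p m_j+\nu-1)}$. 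It remains to reparametrize: by construction $F(s_1,\dots,s_p)=\int_0^\infty u^{\sum m_j-\nu}(1+u)^{-m_0}\prod_{j=1}^p(1+us_j)^{-m_j}\,du$, so defining $F^{[\nu]}_{m_0,\dots,m_p}(s_1,\dots,s_p):=F(s_1,s_1s_2,\dots,s_1\cdots s_p)$ replaces the single variable in the $j$-th slot by $\prod_{h=1}^j s_h$, reproducing exactly the function in the statement. Under this substitution the composite modular operators $\Delta_{(1)}\cdots\Delta_{(j)}$ of the lemma become the arguments of $F^{[\nu]}$ evaluated at $(\Delta_{(1)},\dots,\Delta_{(p)})$, completing the identification.

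The one point I would want to verify carefully — more a checkable hypothesis than a genuine obstacle — is the integrability condition of Proposition \ref{leschrearrangemnet} for this particular family $f_j$. Near $u=\infty$ the full integrand decays like $u^{\sum m_j-\nu}\cdot u^{-\sum m_j}=u^{-\nu}$, which is integrable precisely because $\nu\in\{2,3\}$; near $u=0$ one needs $\sum m_j-\nu>-1$, a condition met by every exponent pattern that actually arises after integrating out the $\eta$ and $\theta$ variables in the heat-trace computation. Once this is confirmed, the uniform bounds on the derivatives $u^{|\alpha|}(\partial^\alpha f)(us)$ over compact $s$-intervals follow exactly as in \cite{Lesch2017}, the lemma applies, and the stated identity follows.
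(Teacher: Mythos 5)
Your proposal is correct and follows essentially the same route the paper indicates: it applies Proposition \ref{leschrearrangemnet} with $a=2h$, $f_0(x)=x^{\sum m_j-\nu}(1+x)^{-m_0}$ and $f_j(x)=(1+x)^{-m_j}$, absorbs the resulting power of $A=k^2$ into the prefactor, and reparametrizes $F(s_1,s_1s_2,\dots,s_1\cdots s_p)$ to obtain $F^{[\nu]}_{m_0,\dots,m_p}$, exactly as the paper sets up just before the corollary (and in parallel with the proof of Corollary \ref{rearrangecon}). Your added remarks --- that no substitution analogous to $u=t^{2/3}$ is needed here and that integrability holds for the exponent patterns actually arising --- are accurate and only make explicit what the paper leaves implicit.
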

For instance,
\begin{align*}
&F^{[2]}_{1,1}(s_1)=\frac{\log (s_1)}{s_1-1},\\
&F^{[3]}_{2,1}(s_1)=\frac{s_1 (\log (s_1)-1)+1}{(s_1-1)^2},\\
&F^{[2]}_{1,1,1}(s_1,s_2)=\frac{(s_1 s_2-1) \log (s_1)-(s_1-1) \log (s_1 s_2)}{(s_1-1) s_1 (s_2-1) (s_1 s_2-1)},\\
&F^{[3]}_{1,1,1}(s_1,s_2)=\frac{-s_1 s_2 \log (s_1)+s_1 s_2 \log (s_1 s_2)-s_2 \log (s_1 s_2)+\log (s_1)}{(s_1-1) (s_2-1) (s_1 s_2-1)}.
\end{align*}

We also need the following result from \cite[Section 6.1]{Connes-Moscovici2014}, according to which  we find the formula  
\begin{equation}\label{translatetologk}
\begin{aligned}
&k^{-1}\delta_j(k)=f(\Delta)\left(\delta_j(\log k)\right),\\
&k^{-1}\delta_j^2(k)=f(\Delta)(\delta_j^2(\log k))+2 g(\Delta_{(1)},\Delta_{(2)})(\delta_j(\log k)\cdot\delta_j(\log k)),
\end{aligned}
\end{equation}
where
$$f(x)=\int_0^1 x^{s/2}ds=\frac{2(\sqrt{x}-1)}{\log x },$$
$$g(x,y)=\int_0^1\int_0^s x^{s/2}y^{t/2}dtds=\frac{4\big(\sqrt{x}((\sqrt{y}-1)\log x -\log y )+\log y \big)}{\log x \log y (\log x +\log y )}.$$
Now we can start computing the Laplacians and their spectral densities.

\subsection{Scalar curvature}\label{section_scalar_curvature}
In this section, we first find the Laplacian on functions $\lap_{0,h}$ for the given metric and its anti-unitary equivalent differential operator $\tilde{\lap}_{0,h}$.
Then we use its symbol and its resolvent expansion to find the scalar curvature. 

The Laplacian on functions $\lap_{0, h}:\Ai\to \Ai$ for the metric \eqref{metric_2}, which is given by $\lap_{0,h} = d_0^*d_0$, computes as
\begin{equation*}
\lap_{0,h}(a)=\delta _1^2(a)k^2+\delta _2^2(a)k^2+\delta_3(\delta_3(a)k^{-2})k^2.
\end{equation*}
We define the map $R_{0,k}: \mathcal{H}_{0,0}\rightarrow \mathcal{H}_{0,h}$ by $R_{0,k}a=ak$,
for all $a\in\A$.
It is not hard to see that $R_{0,k}$ is an isometry from $\mathcal{H}_{0,0}$ to $\mathcal{H}_{0,h}$.
That is,
$
\langle R_{0,k}a, R_{0,k}b\rangle_{0,h} = \langle a, b\rangle_{0,0}.
$
Hence, the Laplacian on functions $\lap_{0,h}$ for the metric \eqref{metric_2} is anti-unitary equivalent to the differential operator $\left(R_{0,k}J \right)^*\lap_{0,h}R_{0,k}J$ on $\Omega^0\mathbb{T}^3_\theta$, 
which we denote by $\tilde{\lap}_{0,h}$.

\begin{lemma}
The homogeneous components of the symbol $\sigma(\tilde{\lap}_{0,h})$ are:
\begin{equation*}
\begin{aligned}
a_2&=k^2\xi_1^2+k^2\xi_2^2+\xi_3^2,\\
a_1&=2k\delta_1(k)\xi_1+2k\delta_2(k)\xi_2+\big(k^{-1}\delta_3(k)-\delta_3(k)k^{-1}\big)\xi_3,\\
a_0&=k\delta_1^2(k)+k\delta_2^2(k)+k^{-1}\delta_3^2(k)-\delta_3(k)k^{-2}\delta_3(k)-k^{-1}\delta_3(k)k^{-1}\delta_3(k).
\end{aligned}
\end{equation*}
\end{lemma}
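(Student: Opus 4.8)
The goal is to compute the homogeneous components of the symbol of $\tilde{\lap}_{0,h}$, where $\tilde{\lap}_{0,h} = (R_{0,k}J)^*\lap_{0,h}R_{0,k}J$ and $\lap_{0,h}(a)=\delta_1^2(a)k^2+\delta_2^2(a)k^2+\delta_3(\delta_3(a)k^{-2})k^2$. The plan is to first reduce the unitary-equivalent operator to a concrete differential operator in terms of $k$ and the $\delta_j$, then extract its symbol by replacing each $\delta_j$ with $i\xi_j$ acting from the left, and finally collect the terms by homogeneity degree in $\xi$.

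First I would write out $\tilde{\lap}_{0,h}$ explicitly. Using $J\delta_j = -\delta_j J$, $J^2=\mathrm{id}$, and $JR_{k^m}J = k^m$ (i.e.\ $J$ conjugates right multiplication into left multiplication), the conjugation $(R_{0,k}J)^*\lap_{0,h}R_{0,k}J = k^{-1}J\lap_{0,h}Jk$ turns each right-multiplication factor in $\lap_{0,h}$ into a left-multiplication factor and flips no signs on the $\delta_j^2$ terms since each derivative appears an even number of times. Concretely, for the first two flat directions one gets $k^{-1}\cdot k^2\delta_j^2\cdot k = k\,\delta_j^2\,k$ after moving the $k^2$ through $J$, and for the third direction $\delta_3(\delta_3(\cdot)k^{-2})k^2$ becomes $k^{-1}\,\delta_3\,k^{-2}\,\delta_3\,k$ (here the inner $k^{-2}$ sits between the two $\delta_3$'s). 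So $\tilde{\lap}_{0,h} = k\delta_1^2 k + k\delta_2^2 k + k^{-1}\delta_3 k^{-2}\delta_3 k$ as an operator, where juxtaposition means composition and the bare $k^{\pm}$ denote left multiplication.

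Next I would compute the symbol. Writing the symbol means replacing the leftmost $\delta_j$ acting on a plane wave $u^m$ by $i\xi_j$ and treating the remaining $\delta_j$'s via the Leibniz rule, but more efficiently I would just expand each composition using $\delta_j(fg)=\delta_j(f)g+f\delta_j(g)$ and read off the coefficient of each monomial in $\xi$. For the terms $k\delta_j^2 k$ with $j=1,2$, expanding $k\,\delta_j^2(k\,a)$ gives $k\big(\delta_j^2(k)a + 2\delta_j(k)\delta_j(a)+k\delta_j^2(a)\big)$, whose symbol contributes $k^2\xi_j^2$ at order two (the principal term), $2k\delta_j(k)\xi_j$ at order one, and $k\delta_j^2(k)$ at order zero. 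For the third term $k^{-1}\delta_3(k^{-2}\delta_3(ka))$ I expand the two nested derivatives carefully, tracking the interior $k^{-2}$; the order-two piece gives $\xi_3^2$ (since $k^{-1}k^{-2}k = k^{-2}$ multiplies... one must verify the $k$-powers collapse correctly to yield exactly $\xi_3^2$), the order-one piece assembles into $(k^{-1}\delta_3(k)-\delta_3(k)k^{-1})\xi_3$, and the order-zero piece gives $k^{-1}\delta_3^2(k)-\delta_3(k)k^{-2}\delta_3(k)-k^{-1}\delta_3(k)k^{-1}\delta_3(k)$. Summing the three directions reproduces the claimed $a_2,a_1,a_0$.

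The main obstacle is the bookkeeping in the third direction: because $k^{-2}$ sits \emph{between} the two $\delta_3$'s rather than to the right of both, the noncommutativity prevents the $k$-powers from simply combining, and one must keep each factor in its correct order. In particular the order-one coefficient genuinely splits into the commutator-like combination $k^{-1}\delta_3(k)-\delta_3(k)k^{-1}$ rather than collapsing to a single term, and the order-zero coefficient keeps three separate products that do not simplify. I would therefore carry out the Leibniz expansion of $k^{-1}\delta_3(k^{-2}\delta_3(ka))$ one derivative at a time, never commuting a $k^{\pm}$ past a $\delta_3(k)$, and then match coefficients of $\xi_3^2$, $\xi_3$, and $\xi_3^0$; the flat directions are routine by comparison.
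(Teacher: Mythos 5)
Your overall strategy --- rewrite $\tilde{\lap}_{0,h}$ as an explicit differential operator with all coefficients on the left and then read off the symbol by homogeneity in $\xi$ --- is exactly what the paper does. However, there is a concrete error in your reduction of the third direction. Since $\tilde{\lap}_{0,h}=JR_{k^{-1}}\,\lap_{0,h}\,R_kJ=k^{-1}\,(J\lap_{0,h}J)\,k$ and $J$ turns the \emph{outer} right multiplication by $k^2$ in $\delta_3(\delta_3(\cdot)k^{-2})k^2$ into a left multiplication by $k^2$, the third term becomes $k^{-1}\cdot k^2\cdot\delta_3\cdot k^{-2}\cdot\delta_3\cdot k = k\,\delta_3\,k^{-2}\,\delta_3\,k$, not $k^{-1}\,\delta_3\,k^{-2}\,\delta_3\,k$ as you wrote; you dropped the outer $k^2$ that you correctly kept in the flat directions. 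This is not a cosmetic slip: with your operator the principal part is $k^{-1}k^{-2}k\,\xi_3^2=k^{-2}\xi_3^2$, the order-one coefficient comes out as $k^{-3}\delta_3(k)-k^{-2}\delta_3(k)k^{-1}$, and the order-zero terms acquire extra factors of $k^{-2}$ --- none of which match the stated $a_2$, $a_1$, $a_0$. You in fact flag this tension yourself (``one must verify the $k$-powers collapse correctly'') but then assert that the claimed coefficients ``assemble'' anyway, which they do not from your starting formula.

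With the corrected operator $k\,\delta_3(k^{-2}\delta_3(k\,\cdot))$ the Leibniz expansion you describe goes through and does yield $\xi_3^2$ at order two, $\bigl(k^{-1}\delta_3(k)-\delta_3(k)k^{-1}\bigr)\xi_3$ at order one (using $\delta_3(k^{-1})=-k^{-1}\delta_3(k)k^{-1}$), and the three stated order-zero products (using $\delta_3(k^{-2})=-k^{-2}\delta_3(k^2)k^{-2}$); this is precisely the expansion recorded in the paper's equation for $\tilde{\lap}_{0,h}$, after which one replaces $\delta_j$ by $\xi_j$ (note: by $\xi_j$, not $i\xi_j$, in this paper's symbol convention --- otherwise the signs of $a_2$ would come out wrong). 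So the route is the right one, but the proof as written starts from an incorrect operator identity and must be repaired before the coefficient matching is valid.
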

\begin{proof}
It can be readily checked that the operator $\tilde{\lap}_{0,h}$, on the elements of $\Ai$, is given by  
\begin{equation}\label{2nd_tilde_lap_0}
\begin{aligned}
\tilde{\lap}_{0,h}(a)
&=k^2\delta _1^2(a)+k^2\delta _2^2(a)+\delta _3^2(a)\\
&+2 k\delta _1(k)\delta _1(a)+2 k\delta _2(k)\delta _2(a)-k^{-1}\delta _3\left(k^2\right)k^{-1}\delta _3(a)+2 k^{-1}\delta _3(k)\delta _3(a)\\
&+k^{-1}\delta _3^2(k)a+k\delta _1^2(k)a+k\delta _2^2(k)a-k^{-1}\delta _3\left(k^2\right)k^{-2}\delta _3(k)a.
\end{aligned}
\end{equation}
 Then the symbol is given by replacing $\delta_j$ by $\xi_j$.
\end{proof}

The scalar curvature of $\nctorus$ equipped with the metric \eqref{metric_2} is defined as in  Definition \ref{ncscalar}.  Similar to the conformal case it is given by  \eqref{scalarcurvature2} where  $b_2$ is the second term in the symbol of the parametrix of $\tilde{\Delta}_{0,h}$ for this metric.
The computation then shows that we have:
\begin{theorem}\label{2nd_scalar_thm}
If the noncommutative 3-torus $\nctorus$ is equipped with the non-conformal metric \eqref{metric_2}, then its scalar curvature $R$ is given by
\begin{equation*}
\begin{aligned}
\pi^{3/2}a_2(\tilde{\lap}_{0,h}) =& K_1(\nabla)(\delta_1^{2}(h)+\delta_2^{2}(h))+H_1(\nabla_{(1)}, \nabla_{(2)})(\delta_1(h)\cdot\delta_1(h)+\delta_2(h)\cdot\delta_2(h))\\
&+k^{-2}K_2(\nabla)(\delta_3^2(h)) +k^{-2}H_2(\nabla_{(1)}, \nabla_{(2)})(\delta_3(h)\cdot\delta_3(h)),
\end{aligned}
\end{equation*}
where 
\begin{equation*}
\begin{aligned}
K_1(s)=&\frac{e^{\frac{s}2}(2e^s-se^s-2-s)}{4s(e^s-1)^2},\\
 K_2(s)=& \frac{1- e^{2s}+2 s e^s}{4 s e^{\frac{s}2} (1- e^s)^2},\\
H_1(s,t)=&\frac{1}{e^{-\frac{s+t}{2}} (e^s-1 ) s  (e^t-1 ) t  (e^{s+t}-1 )^2 (s+t)}\Big( e^s   (e^t-1 )^2 s^2- e^t(e^s-1 )^2  t^2\\
&\qquad\qquad\qquad\qquad -  (e^s-e^t )  (e^{s+t}-1 )s t + (1-e^s )  (e^t-1 )  (e^{s+t}-1 ) (t-s) \Big),\\
H_2(s, t)=&\frac{1}{4 e^{\frac{1}{2} (s+t)} (e^s-1 )   (e^t-1 )  (e^{s+t}-1 )^2st  (s+t)}\Big((e^t-1 )^2   (e^{s+t}-3 e^{2 s+t}-e^s-1 ) s^2\\
& + (e^s-1 )^2    (e^{s+2t}+e^{s+3 t}-e^{2t}+3e^t )t^2 -2(e^s-1 )(e^t-1)(e^{2 (s+t)}-1 )(s-t) \\
&  + (e^{s+t}-1) (4 e^{s+t}+e^{2 s+t}-5 e^{s+2 t}+e^{2 s+2 t}+e^s-5 e^t+2 e^{2 t}+1 )s t \Big).
\end{aligned}
\end{equation*}
\qed
\end{theorem}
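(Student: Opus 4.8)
The plan is to run the computational pipeline assembled in Section~\ref{nonconformalmetrics} on the specific symbol $\sigma(\tilde{\lap}_{0,h})=a_2+a_1+a_0$ recorded in the preceding Lemma, with $a_2=k^2\xi_1^2+k^2\xi_2^2+\xi_3^2$, and then package the outcome through Definition~\ref{ncscalar}. First I would assemble $b_2(\xi,-1)$ from the recursion~\eqref{bjs}, substituting in $a_0,a_1,a_2$. This step is purely mechanical but expands into a large number of monomials, each of the form $\xi^{\alpha}$ times an alternating (noncommuting) product of resolvent factors $b_0(\xi,-1)$ interlaced with derivatives $\delta_i(k^{\pm\ell})$ of the dilaton coming from $a_1$ and $a_0$.

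The decisive reduction comes from the change of variables~\eqref{substitution}. Each factor $b_0(\xi,-1)$ splits as $\tfrac{1}{1+\eta^2}\,b_0(u)$ with $b_0(u)=(1+uk^2)^{-1}$, so the scalar pieces $(1+\eta^2)^{-1}$ collect out front while the noncommutative content is carried entirely by the ordered product of $b_0(u)$'s and the $\delta_i$ factors. Because the third coordinate is \emph{unperturbed} in the metric~\eqref{metric_2}, the terms separate into two families: an in-plane family built from $\delta_1,\delta_2$ (whose $\xi_1,\xi_2$ dependence enters through $\cos\theta,\sin\theta$ and through powers of $\sqrt{u}$) and an out-of-plane family built from $\delta_3$ (whose $\xi_3=\eta$ dependence carries no $u$). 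Integrating $\theta$ annihilates every monomial with an odd power of $\cos\theta$ or $\sin\theta$ and replaces the even ones by the usual constants; integrating $\eta$ then contributes rational prefactors. The worked example in the text, terminating in $\tfrac{3\pi^2}{16}$, is the exact template, and tracking the accumulated powers of $\pi$ against $\tfrac1{8\pi^{7/2}}$ from~\eqref{scalarcurvature2} is what produces the overall $\pi^{3/2}$ normalization on the left of the statement.

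After the commutative integrations each surviving term is a one-dimensional $u$-integral of precisely the shape handled by Corollary~\ref{rearrangenoncon}. The in-plane terms produce $\nu=2$ integrals while the out-of-plane $\delta_3$ terms produce $\nu=3$ integrals; the shift in the exponent $2(-\sum m_j+\nu-1)$ of the prefactor $k^{2(\cdots)}$ in the corollary is exactly the mechanism that attaches the extra $k^{-2}$ to the $\delta_3$-contributions, hence to $K_2$ and $H_2$. Applying the corollary converts every $u$-integral into $F^{[\nu]}_{m_0,\dots,m_p}(\Delta)$ acting, through the modular operator $\Delta(x)=k^{-2}xk^2$, on the appropriate ordered product of derivative factors. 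I would then rewrite $k^{-1}\delta_j(k)$ and $k^{-1}\delta_j^2(k)$ in terms of $\delta_j(h)$ and $\delta_j^2(h)$ via~\eqref{translatetologk} (here $\log k=h$, so $f$ and $g$ bring in the $\sqrt{\Delta}=e^{\nabla/2}$ factors responsible for the $e^{s/2}$, $e^{(s+t)/2}$ shapes), collect the linear terms $\delta_j^2(h)$ and the quadratic terms $\delta_j(h)\cdot\delta_j(h)$, and re-express all operators through $\nabla=\log\Delta=-2[h,\cdot]$. Summing over $j\in\{1,2\}$ and over $j=3$ separately yields the four functions $K_1,H_1,K_2,H_2$.

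The main obstacle is not any single conceptual point but the volume and delicacy of the bookkeeping. Because the $b_0(u)$ and $\delta_i$ factors do not commute, the rearrangement lemma must be applied to each ordered product individually, and the two-variable kernel $g$ in~\eqref{translatetologk} must be used to correctly symmetrize every $\delta_j^2(k)$ insertion; a single misordering corrupts the final kernels. The truly laborious part is the algebraic collapse of the many composed expressions $F^{[\nu]}_{m_0,\dots,m_p}\circ f$ and $F^{[\nu]}_{m_0,\dots,m_p}\circ g$ into the four closed forms displayed, together with the cancellations that must occur for the in-plane $K_1,H_1$ to coincide (as noted in the text) with the two-dimensional curved-torus functions of \cite{Connes-Moscovici2014,Fathizadeh-Khalkhali2013}. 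A useful internal consistency check along the way is that the commutative limit $\nabla\to 0$ must reproduce the classical scalar curvature of $e^{-2h}(dx^2+dy^2)+dz^2$, which I would verify by taking $s,t\to 0$ in $K_1,H_1,K_2,H_2$.
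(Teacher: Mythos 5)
Your proposal is correct and follows essentially the same route as the paper's own computation (carried out in Appendix A): expand $b_2(\xi,-1)$ from the recursion \eqref{bjs}, apply the substitution \eqref{substitution} to factor out and integrate the $\eta$ and $\theta$ dependence, reduce the remaining $u$-integrals via Corollary \ref{rearrangenoncon}, and convert to $\log k$ and $\nabla$ through \eqref{translatetologk}. The only minor imprecision is that the $\delta_3$-family does not produce exclusively $\nu=3$ integrals (the paper's expansion shows both $F^{[2]}$ and $F^{[3]}$ functions acting on the $\delta_3$ terms), but this does not affect the validity of the argument.
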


We can get the classical scalar curvature in  the limit $\theta \rightarrow 0$, which is obtained by taking the limits of the above functions as $s,t\rightarrow 0$.
We have 
\begin{equation*}
\lim\limits_{(s,t)\to (0,0)} H_1(s, t)=0,\quad\lim\limits_{(s,t)\to (0,0)}  H_2(s, t)=\frac18,\quad\lim_{s\to 0} K_1(s)= -\frac{1}{24},\quad\lim_{s\to 0} K_2(s)= -\frac{1}{12}.
\end{equation*}
Therefore, when $\theta\to 0$, the scalar curvature approaches to
\begin{equation*}
-\frac1{48\pi^{3/2}}\left(2\delta_1^{2}(h)+2\delta_2^{2}(h)+4e^{-2h}\delta_3^2(h)-6e^{-2h}\delta_3(h)^2\right),
\end{equation*}
which is $\frac{e^{-2h}}{48\pi^{3/2}}$  multiple of the scalar curvature,
$2e^{2h}(h_{11}+h_{22}) + 4h_{33} -6(h_3)^2$, in the commutative case. 
This matches with our normalization of the scalar curvature density.

\begin{remark}\label{resultsfromtwodimconformal}
Comparing  the functions $K_1$ and $H_1$  with the corresponding  functions  $K$ and  $H$ found in 
  \cite{Connes-Moscovici2014,Fathizadeh-Khalkhali2013}   for the spectral densities of the Laplacian  $k\partial^{\ast}\partial k$ reveals that 
\begin{equation}\label{functionslimitnonconformal}
K_1(s)=-\frac18 K(s),\qquad H_1(s,t)=-\frac18  H(s,t).
\end{equation}
The factor $-\frac18$ is the result of the use of two different normalizations.  
In the rest of this section we shall look for a clarification of why such a relation \eqref{functionslimitnonconformal} should be true. 

First note that the Laplacian on functions $\tilde{\lap}_{0,h}^{(1)}$, given in \eqref{2nd_tilde_lap_0}, is the sum of two Laplacians when we assume that $\delta_3(k)=0$;
\begin{equation*}
\tilde{\lap}_{0,h}=\tilde{\lap}_{0,h}^{(1)}\otimes 1 + 1\otimes\tilde{\lap}_{0,h}^{(2)},
\end{equation*}
where 
\begin{equation*}
\tilde{\lap}_{0,h}^{(1)}=\sum_{i=1}^2k^2\delta _i^2(a)+2 k\delta _i(k)\delta _i(a)+k\delta _i^2(k)a,\qquad \tilde{\lap}_{0,h}^{(2)}=\delta_{3}^{2}.
\end{equation*}
The operator $\tilde{\lap}_{0,h}^{(1)}$ is equal to the operator $k\partial^{\ast}\partial k$, which is anti-unitarily equivalent to the Laplacian on $C^{\infty}(\mathbb{T}_{\theta}^{2})$ in  \cite[Section 4.1]{Fathizadeh-Khalkhali2013} when the complex structure is given by $\tau = i$, namely $\tau_1=0$, $\tau_2=1$.
The operator $\tilde{\lap}_{0,h}^{(2)}$ is the Laplacian of $\mathbb{T}^1$ with flat metric.
Then,  the local spectral invariants of $\tilde{\lap}_{0,h}$ are related to those of $\tilde{\lap}_{0,h}^{(1)}$ and $\tilde{\lap}_{0,h}^{(2)}$ as we discuss next.

Let $P$ and $Q$ be  two elliptic second order positive differential operators on $C({ \mathbb{T}}^d_\theta)$ and $C({ \mathbb{T}}^{d'}_{\theta'})$ respectively.
Then $P\otimes 1 +1\otimes Q$ forms a  positive second order elliptic differential operator on $C({ \mathbb{T}}^d_\theta)\otimes C({ \mathbb{T}}^{d'}_{\theta'})$.
Moreover, for any $t>0$ and $a\in A_\theta$ and $b\in A_{\theta,}$ we have
$$\Tr(a\otimes b e^{-t(P\otimes 1 +1\otimes Q)})=\Tr(ae^{-tP})\Tr(be^{-tQ}), \quad a\in C^\infty({ \mathbb{T}}^d_\theta),\, b\in C^\infty({ \mathbb{T}}^{d'}_{\theta'}),  \, \, t>0.$$ 
This not only gives a relations between the coefficients of asymptotic expansions  as $t\to 0^+$, but also it provides a relation among the densities of these coefficients.
In other words if 
$$\Tr(ae^{-tP})\sim \sum_{n=0}^\infty t^{n-\frac{d}{2}} \varphi_\theta(a a_n(P)), \qquad \Tr(be^{-tP})\sim \sum_{m=0}^\infty t^{m-\frac{d'}2} \varphi_{\theta'}(b a_m(Q)),$$
where $\varphi_{\theta}$ and $\varphi_{\theta'}$ is the tracial state on $C^\infty({ \mathbb{T}}^d_\theta)$ and $ C^\infty({ \mathbb{T}}^{d'}_{\theta'})$, respectively,
then 
\begin{equation*}
\begin{aligned}
\operatorname{Tr}(a\otimes b e^{-t(P\otimes 1 +1\otimes Q)})
&=\sum_{n=0}^\infty \sum_{m=0}^\infty t^{m+n-\frac{d'}2-\frac{d}{2}} \varphi_\theta(a a_n(P))  \varphi_{\theta'}(b a_m(Q))\\
&=\sum_{l=0}^\infty t^{l-\frac{d'+d}{2}} \varphi_\theta\otimes \varphi_{\theta'}\left(a\otimes b \Big(\sum_{l=m+n}a_n(P)\otimes a_m(Q)\Big)\right).
\end{aligned}
\end{equation*}
In our case, we have 
\begin{equation*}
a_2(\tilde{\lap}_{0,h})=a_2(\tilde{\lap}_{0,h}^{(1)})\otimes a_0(\tilde{\lap}_{0,h}^{(2)})+a_0(\tilde{\lap}_{0,h}^{(1)})\otimes a_2(\tilde{\lap}_{0,h}^{(2)}).
\end{equation*}
However, since $\sigma(\tilde{\lap}_{0,h}^{(2)})=\xi^2$,  we have  $a_2(\tilde{\lap}_{0,h}^{(2)})=0$ and $a_0(\tilde{\lap}_{0,h}^{(2)})=\sqrt{\pi}$.
Thus 
\begin{equation*}
a_2(\tilde{\lap}_{0,h})=\sqrt{\pi}a_2(\tilde{\lap}_{0,h}^{(1)}).
\end{equation*}
This is the main reason for why the functions of two dimensional noncommutative two torus with conformally flat metric emerge in the formulas for the  noncommutative three torus with non-conformal metric \eqref{metric_2}.  On the other hand, we note that the functions $K_2$ and $H_2$ in Theorem \ref{2nd_scalar_thm} are new and do not 
seem to be related to functions for the noncommutative two torus. 
\end{remark}

\subsection{Laplacian on 1-forms and the Ricci density}
In this section, after finding the Laplacian on 1-forms on $\nctorus$ equipped with the metric \eqref{metric_2}, we compute its second heat trace density.
Combining with the results from the previous section, we shall then compute the Ricci density of this metric.

Recall that exterior derivative on  1-forms is given by
\begin{equation*}
d_1(a_1,a_2,a_3) = \left(i\delta_1(a_2)-i\delta_2(a_1), i\delta_2(a_3)-i\delta_3(a_2), i\delta_1(a_3)-i\delta_3(a_1)\right),
\end{equation*}
and  hence its formal adjoint with respect to the metric is
\begin{equation*}
d_1^{*}(a_1,a_2,a_3)=\left(i\delta_2(a_1k^2)+i\delta_3(a_3),i\delta_3(a_2)-i\delta_1(a_1k^2), -i\delta_2(a_2)k^2-i\delta_1(a_3)k^2 \right).
\end{equation*}
Thus, the Laplacian on 1-forms $\lap_{1,h}$ computes as  
\begin{eqnarray*}
&\lap_{1,h}(a_1,a_2,a_3)=d_0d_0^*(a_1,a_2,a_3) + d_1^*d_1(a_1,a_2,a_3)=\\
&\Big(\delta _1(\delta _1( a_1) k^2)+\delta_2(\delta _2( a_1) k^2)+\delta _3^2( a_1)+\delta _2( a_2) \delta _1(k^2)-\delta _1(a_2) \delta _2(k^2)-\delta _1( a_3 k^{-2} \delta _3(k^2)),\\
&\delta _1( a_1) \delta _2 (k^2 )-\delta _2( a_1) \delta _1 (k^2 )+\delta _1 (\delta _1( a_2) k^2 )+\delta _2 (\delta _2( a_2) k^2 )+\delta _3^2( a_2) -\delta _2 ( a_3 k^{-2}  \delta _3 (k^2 ) ),\\
&\delta _1( a_1) \delta _3 (k^2 )+\delta _2( a_2)\delta _3 ( k^2 )+\delta _1^2( a_3)k^2+\delta _2^2( a_3)k^2+\delta _3 (\delta_3( a_3 k^{-2}) k^2 )\ \Big).
\end{eqnarray*}
\begin{lemma}
The Laplacian on 1-forms ${\lap}_{1,h}$ is anti-unitary equivalent to a differential operator $\tilde{\lap}_{1,h}$ whose symbol is the sum of the homogeneous components given by
\begin{equation*}
a_2= (k^2 \xi _1^2+k^2 \xi _2^2+\xi _3^2)\operatorname{I}_3,
\end{equation*}
\begin{equation*}
a_1 = 
\left (
\begin{array}{ccc}
 \delta _1 (k^2 )\xi_1+\delta _2 (k^2 )\xi _2  & \delta _1 (k^2 )\xi _2-\delta _2 (k^2 )\xi _1 & -\delta _3 (k^2 )k^{-1} \xi _1 \\
 \delta _2 (k^2 )\xi _1- \delta _1 (k^2 )\xi _2 & \delta _1 (k^2 )\xi _1+\delta _2 (k^2 )\xi _2  & -\delta _3 (k^2 )k^{-1} \xi _2 \\
k^{-1}\delta _3 (k^2 ) \xi _1 & k^{-1}\delta _3 (k^2 ) \xi _2 & 2 k\sum\limits_{i=1}^2\delta _i(k) \xi _i+ [k^{-1}, \delta _3(k)] \xi _3\\
\end{array}
 \right),
\end{equation*}
\begin{equation*}
a_0=
 \left(
\begin{array}{ccc}
 0 & 0 & -\delta_1(\delta_3(k^2)k^{-1}) \\
 0 & 0 &  -\delta_2(\delta_3(k^2)k^{-1}) \\
 0 & 0 & k\delta_1^2(k) + k\delta_2^2(k) +k^{-1}\delta_3(k^2\delta_3(k^{-1})) \\
\end{array}
 \right).
\end{equation*}
\end{lemma}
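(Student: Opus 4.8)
The plan is to mirror the conformal construction: first exhibit an anti-unitary operator that conjugates $\lap_{1,h}$ into an honest positive elliptic differential operator $\tilde{\lap}_{1,h}$ on the flat Hilbert space, and then read off its homogeneous symbol directly from the resulting explicit formula. First I would introduce the componentwise right multiplication $R:\mathcal{H}_0^{(1)}\to\mathcal{H}_h^{(1)}$ defined by $R(a_1,a_2,a_3)=(a_1,a_2,a_3k)$, where $k=e^h$. Using the trace property of $\varphi$ and the self-adjointness of $k$, the weight $k^{-2}$ attached to the third slot of the inner product on $\mathcal{H}_h^{(1)}$ is absorbed, since $\varphi\big(k\,b_3^{*}a_3\,k^{-1}\big)=\varphi(b_3^{*}a_3)$; hence $R$ is an isometry and extends to a unitary. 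The point that must be handled with care is that, being unitary \emph{between two different Hilbert spaces}, its adjoint equals its inverse, $R^{*}=R^{-1}=R_{(1,1,k^{-1})}$, rather than the adjoint of right multiplication on a single space. With $J(a)=a^{*}$ the usual anti-unitary conjugation, $RJ$ is anti-unitary and I set $\tilde{\lap}_{1,h}:=(RJ)^{*}\lap_{1,h}(RJ)=JR^{*}\lap_{1,h}RJ$, which is an operator on $\mathcal{H}_0^{(1)}$.

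The heart of the argument is to conjugate the explicit matrix of $\lap_{1,h}$ displayed above by $RJ$. For this I would use only the elementary relations already employed in the conformal case, namely $J^{2}=1$, $J\delta_j=-\delta_j J$, and $JL_aJ=R_{a^{*}}$ (so that for self-adjoint $k$ the twists $R_{k^{\pm1}}$ turn into left multiplications under conjugation), together with the Leibniz rule and $(\delta_j a)^{*}=-\delta_j(a^{*})$. Concretely, applying $\tilde{\lap}_{1,h}$ to a test $1$-form $(a_1,a_2,a_3)$ amounts to carrying out four steps in order: $J$ conjugates the entries; $R$ multiplies the third entry on the right by $k$; the displayed $\lap_{1,h}$ is applied; $R^{*}$ multiplies the third output entry on the right by $k^{-1}$; and a final $J$ conjugates. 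The extra $k^{\pm1}$ produced by $R$ and $R^{*}$ combine with the $k^{\pm2}$ already present and, after expanding every $\delta_j$ that falls on a product, reorganize into a $3\times 3$ matrix of differential operators in the $\delta_j$ with coefficients built from $k$ and its derivatives.

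Once $\tilde{\lap}_{1,h}$ is in this explicit form, the symbol is obtained exactly as in the $0$-form lemma, by replacing each $\delta_j$ acting on the argument by $\xi_j$: the terms quadratic in $\xi$ assemble into $a_2$, the linear ones into $a_1$, and the $\xi$-free remainder into $a_0$. Two checks confirm the shape of the answer. The diagonal second-order part of $\lap_{1,h}$ yields, after the right-hand factors $k^2$ migrate to the left under conjugation, the claimed principal symbol $(k^2\xi_1^2+k^2\xi_2^2+\xi_3^2)\operatorname{I}_3$; and the coupling term $-\delta_1(a_3k^{-2}\delta_3(k^2))$ in the first row, upon substituting $b_3=a_3^{*}k$ and applying the final $J$, produces precisely the first-order contribution $-\delta_3(k^2)k^{-1}\xi_1$ to the $(1,3)$ entry of $a_1$ and the $\xi$-free contribution $-\delta_1(\delta_3(k^2)k^{-1})$ to the $(1,3)$ entry of $a_0$.

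I expect the main obstacle to be the bookkeeping in the third row and column, where the $k^{\pm1}$ twists coming from $R$ and $R^{*}$ interact with the factors $k^{\pm2}$ and with $\delta_3(k^2)$ already present in $\lap_{1,h}$. In particular, recovering the commutator $[k^{-1},\delta_3(k)]$ in the $(3,3)$ entry of $a_1$ and the term $k^{-1}\delta_3(k^2\delta_3(k^{-1}))$ in the $(3,3)$ entry of $a_0$ requires careful Leibniz expansions combined with the noncommutative reordering forced by $J$ and by $(\delta_j a)^{*}=-\delta_j(a^{*})$; these are exactly the places where a sign or an ordering is most easily lost.
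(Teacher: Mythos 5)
Your proposal is correct and follows essentially the same route as the paper: the authors also define the isometry $R_{1,k}(b_1,b_2,b_3)=(b_1,b_2,b_3k)$ from $\mathcal{H}_{1,0}$ to $\mathcal{H}_{1,h}$, set $\tilde{\lap}_{1,h}=(R_{1,k}J)^{*}\lap_{1,h}R_{1,k}J$, expand the conjugated operator explicitly using $J\delta_j=-\delta_jJ$ and the Leibniz rule, and read off the homogeneous symbol components by replacing $\delta_j$ with $\xi_j$. Your attention to the third row and column, where the $k^{\pm1}$ twists interact with the $k^{\pm2}$ weights, is exactly where the paper's explicit formula concentrates the nontrivial terms.
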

\begin{proof}
Denote by $R_{1,k}: \mathcal{H}_{1,0}\rightarrow \mathcal{H}_{1,h}$ the operator defined as 
\begin{equation*}
R_{1,k}\left(b_1,b_2,b_3\right) = \left( b_1,b_2, b_3k\right).
\end{equation*}
We notice that $R_{1,k}: \mathcal{H}_{1,0}\rightarrow \mathcal{H}_{1,h}$ is an isometry from $\mathcal{H}_{1,0}$ to $\mathcal{H}_{1,h}$.
Thus $\lap_{1,h}$ is anti-unitary equivalent to $\tilde{\lap}_{1,h}=\left( R_{1,k}J\right)^*\lap_{1,h}R_{1,k}J$ which is given by the formula
\begin{eqnarray*}
&\tilde{\lap}_{1,h}(a_1,a_2,a_3)=\\
&\Big(\delta_1(k^2\delta_1(a_1)) +\delta_2(k^2\delta_2(a_1))+ \delta _3^2(a_1) +\delta_1(k^2)\delta_2(a_2)-\delta_2(k^2)\delta_1(a_2) -\delta_1(\delta_3(k^2)k^{-1}a_3),\\
& \delta_2(k^2)\delta_1(a_1)-\delta_1(k^2)\delta_2(a_1)+\delta_1(k^2\delta_1(a_2))  +\delta_2(k^2\delta_2(a_2))+ \delta_3^2(a_2)  -\delta_2(\delta_3(k^2)k^{-1}a_3),\\
&k^{-1}\delta_3(k^2)\delta_1(a_1)+k^{-1}\delta_3(k^2)\delta_2(a_2)  +k\delta_1^2(k a_3) + k\delta_2^2(ka_3) +k^{-1}\delta_3(k^2\delta_3(k^{-1}a_3))  \Big).
\end{eqnarray*}
This proves the lemma.
\end{proof}

Then computation can be carried out to compute $a_2(\tilde{\lap}_{1,h})$, and the final result is given in the following proposition.
In this proposition to make the formulas concise, we shall use the notation 
$$F^\nabla(\rho):=F(\nabla)(\rho),\qquad F^\nabla(\rho_1\cdot \rho_2):=F(\nabla_{(1)},\nabla_{(2)})(\rho_1\cdot \rho_2),$$
for a given function $F$ with one or two variables. 
\begin{proposition}
The second density of the heat trace for the operator $\tilde{\lap}_{1,h}$ is given by 
\begin{equation*}
\begin{aligned}
\pi^{\frac32}a_2(\tilde{\lap}_{1,h})&=\left(K_{22}^\nabla(\delta_2^2(h))+2W_{22}^\nabla(\delta_2(h)^2) + k^{-2} {K}_3^\nabla(\delta_3^2(h))+k^{-2}H_3^\nabla(\delta_3(h)^2)\right)  E_{11}\\
&+\left(K_{11}^\nabla(\delta_1^2(h))+2W_{11}^\nabla(\delta_1(h)^2)+ k^{-2} {K}_3^\nabla(\delta_3^2(h))+k^{-2}H_3^\nabla(\delta_3(h)^2)\right)  E_{22}\\
&+\left(K_1^\nabla(\delta_1^2(h)+\delta_2^2(h))+H_1^\nabla(\delta_1(h)^2+\delta_2(h)^2)+k^{-2}H_4^\nabla(\delta_3(h)^2)\right) E_{33}\\
&+\sum k^{-c(i,j)}\left(K_{ij}^\nabla(\delta_i\delta_j(h))+S_{ij}^\nabla([\delta_i(h),\delta_j(h)])+W_{ij}^\nabla(\{\delta_i(h),\delta_j(h)\})\right) E_{ij}.
\end{aligned}
\end{equation*} 
Here $[\delta_i(h),\delta_j(h)]$ and $\{\delta_i(h),\delta_j(h)\}$ denote the commutator and anti-commutator. The functions are given as the entries of the following matrices.
$${\bf{K}}=\frac{1}{4s(e^s-1)}\begin{pmatrix}
\frac{e^{2s}-2se^s-1}{e^s-1} 		&	0								&	 (s -1)e^{\frac{s}{2}}+e^{-\frac{s}{2}}	\\ 
0									& \frac{e^{2s}-2se^s-1}{e^s-1}		&	 (s -1)e^{\frac{s}{2}}+e^{-\frac{s}{2}}	\\ 
e^s-s-1								&	e^s-s-1							&	\frac{1-e^{2s}+se^{2s}+s}{e^{\frac{s}2}(e^s-1)}
\end{pmatrix},
$$
$${\bf{S}}(s,t)=\begin{pmatrix}
0		&	1		&	 \frac12{e^{-\frac{s+t}{2}}}	\\ 
1		& 0		   &	\frac12{e^{-\frac{s+t}{2}}} \\ 
\frac12	&	\frac12	&	0
\end{pmatrix}S_1(s,t),$$
where 
$$S_1(s,t)=\frac{1}{2 s t}-\frac{(e^s-1)^2 e^t t+e^s s (e^t-1)^2}{2 s t (e^s-1) (e^t-1) (e^{s+t}-1)}.$$
Also,
$${\bf{W}}(s,t)=\begin{pmatrix}
\frac12\cosh(\frac{s+t}2)				&	0									&	\frac{e^{-s-t}-1}{4}	\\ 
0										&	\frac12\cosh(\frac{s+t}2) 			&	\frac{e^{-s-t}-1}{4}	\\ 
\frac12\sinh(\frac{s+t}2)				&	\frac12\sinh(\frac{s+t}2) 			&	\frac{W_{33}(s,t)}{H_1(s,t)}
\end{pmatrix}H_1(s,t).$$
Here, $H_1$ is the function from Theorem \ref{2nd_scalar_thm}. 
The function $W_{33}$, together with the remaining functions, are given below:
\begin{equation*}
\begin{aligned}
&W_{33}(s, t)=\frac{1}{16 e^{\frac{s+t}2} (e^s-1 )   (e^t-1 )  (e^{s+t}-1 )^2st  (s+t)}\times\\
&\Big((e^t-1)^2 (1-4e^s-e^{2s}-e^{s+t}-4e^{2s+t}+e^{3s+t}) s^2 +2(e^{s}+1)(e^{t}+1)(e^{s+t}-1)(e^s-e^t) s t\\
&  - (e^s-1)^2 (1\hspace{-0.1cm}-4e^t\hspace{-0.1cm}-e^{2t}-e^{s+t}-4e^{2t+s}+e^{3t+s}) t^2 -4(e^s-1)(e^t-1)(e^{2(s+t)}-1 )(s-t)  \Big),
\end{aligned}
\end{equation*}
\begin{equation*}
\begin{aligned}
H_3(s, t)&=\frac{1}{4 e^{s} (e^s-1 )   (e^t-1 )  (e^{s+t}-1 )^2st  (s+t)}\Big(e^s(e^t-1 )^2 (-1-3e^s+e^{s+t}-e^{2s+t}) s^2\\
& + (e^s-1)^2(1-e^t+3e^{s+t}+e^{s+2 t})t^2 -4e^s(e^s-1)(e^t-1)(e^{s+t}-1 )(s-t) \\
&  + (7 e^{s+t}-7 e^{2 (s+t)}-e^{3 (s+t)}+2 e^{3 s+t}+3 e^{3 s+2 t}+e^{2 s+3 t}-3 e^s-2 e^{2 s}-e^t+1)s t \Big),
\end{aligned}
\end{equation*}
$$K_3(s)=\frac{2-2e^{s}+se^s+s}{4s(e^s-1)^2} 
 \qquad H_4(s,t)=\frac{(e^s-1)(e^t-1)(s+t)}{8e^{\frac{s+t}2}(e^{s+t}-1)st}.$$
 The power of $k$ in the sum denoted by $c(i,j)$, counts how many of indices $i,\, j$ are equal to 3. \qed
\end{proposition}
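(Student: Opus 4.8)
The plan is to follow, mutatis mutandis, the computational scheme already carried out for the scalar curvature in Theorem \ref{2nd_scalar_thm}, the one essential novelty being that every symbol is now $3\times 3$ matrix valued over $\Ai$. I would start from the symbol $\sigma(\tilde{\lap}_{1,h})=a_2+a_1+a_0$ recorded in the preceding lemma, with $a_2=(k^2\xi_1^2+k^2\xi_2^2+\xi_3^2)\operatorname{I}_3$, and feed it into the recursive formulas \eqref{bjs} to obtain the parametrix symbols $b_0,b_1,b_2$. A useful simplification is that $b_0=(a_2-\lambda)^{-1}$ is a scalar multiple of $\operatorname{I}_3$ in the matrix factor (though of course not central in $\Ai$), so the genuine $M_3$ structure is carried entirely by $a_1$ and $a_0$ and by the derivatives $\delta_i(b_0)$, $\partial_i(b_0)$. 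The new work relative to the scalar case is that the products $b_0a_1b_0$, $b_1a_1b_0$, and the second-derivative term in \eqref{bjs} must be expanded as products of $3\times 3$ matrices whose entries are noncommuting elements of $\Ai$, so that each matrix unit $E_{ij}$ collects a different combination of $\delta_i(k),\delta_i^2(k),\delta_i(k^2)$ and powers of $k$.

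Next I would set $\lambda=-1$ using the homogeneity reduction \eqref{secondtermintegralform} and carry out the $\xi$-integration entry by entry. As in the scalar computation, I apply the substitution \eqref{substitution}, which factorizes $b_0(\xi,-1)=(1+\eta^2)^{-1}b_0(u)$ with $b_0(u)=(1+uk^2)^{-1}$; the $\eta$ and $\theta$ integrals are purely numerical and are evaluated first, leaving, for each entry, a sum of one-dimensional $u$-integrals of the two shapes appearing before Corollary \ref{rearrangenoncon} (with $\nu=2$ or $\nu=3$ according to the number of flat-direction momenta $\xi_3$ present in the integrand). Applying Corollary \ref{rearrangenoncon} converts each such integral into an operator $k^{2(\cdots)}F^{[\nu]}_{m_0,\dots,m_p}(\Delta_{(1)},\dots,\Delta_{(p)})$ acting on a product of the $\rho_j$'s, where each $\rho_j$ is one of $\delta_i(k),\delta_i^2(k),\delta_i(k^2)$.

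After this, I would rewrite every $\delta_i(k)$ and $\delta_i^2(k)$ in terms of $\log k$ by means of \eqref{translatetologk}, pushing the resulting multipliers $f,g$ through the modular operators and composing them with the $F^{[\nu]}$'s, and finally expressing everything through $\nabla=\log\Delta$. The bookkeeping then organizes itself along the block structure dictated by the metric: the $(1,1)$ and $(2,2)$ diagonal entries, symmetric under interchange of directions $1$ and $2$, produce $K_{11},K_{22},W_{11},W_{22}$ together with the flat-direction contributions $K_3,H_3$; the $(3,3)$ entry reproduces the two-torus functions $K_1,H_1$ of Theorem \ref{2nd_scalar_thm} (for reasons analogous to those in Remark \ref{resultsfromtwodimconformal}, the $a_3$ component being the conformally weighted one) plus the new $H_4$; and the off-diagonal entries, indexed by $c(i,j)$ according to how many indices equal $3$, assemble into $K_{ij},S_{ij},W_{ij}$, with the commutator and anti-commutator of $\delta_i(h)$ and $\delta_j(h)$ arising because the products $\delta_i(\log k)\cdot\delta_j(\log k)$ are naturally split into their symmetric and antisymmetric parts under $i\leftrightarrow j$.

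The main obstacle I anticipate is purely the scale and delicacy of the symbolic bookkeeping: with matrix-valued symbols there are on the order of dozens of monomials in each entry, each spawning several rearrangement-lemma integrals, and the functions $W_{33}$ and $H_3$ in particular emerge only after substantial cancellation among terms coming from $b_0a_0b_0$, $b_1a_1b_0$, and the second-derivative term. Verifying that these messy intermediate expressions collapse into the compact closed forms stated—and in particular that the off-diagonal blocks genuinely factor into a symmetric piece (with common factor $H_1$, giving $\mathbf{W}$) and an antisymmetric piece (with common factor $S_1$, giving $\mathbf{S}$)—is the step most prone to error and the one I would organize most carefully, cross-checking it against the $\theta\to 0$ commutative limit and against the scalar functions $K_1,H_1$ as a consistency test.
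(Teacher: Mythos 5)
Your proposal follows essentially the same route as the paper: the paper's own derivation (outlined in Section 4 and Appendix A, with details deferred to an accompanying Mathematica file) proceeds exactly by feeding the matrix-valued symbol into \eqref{bjs}, reducing to $\lambda=-1$, applying the substitution \eqref{substitution} to factor out $(1+\eta^2)^{-1}$, integrating out $\eta$ and $\theta$, invoking Corollary \ref{rearrangenoncon} with $\nu=2$ or $3$, and translating to $\log k$ and $\nabla$ via \eqref{translatetologk}. Your identification of where the matrix structure enters, of the origin of the commutator/anti-commutator splitting in the off-diagonal entries, and of the consistency checks against Theorem \ref{2nd_scalar_thm} and the commutative limit all match the paper's treatment.
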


Unlike the phenomena observed for the scalar curvature  in Remark \ref{resultsfromtwodimconformal}, the functions of the heat trace densities of the Laplacian on 1-forms are not related, at least in  the same way as before, to those of the Laplacian on 1-forms of the conformally flat metric. 
This is a consequence of the simple fact that the Laplacian on 1-forms of the product Riemannian manifolds is not the  sum of the Laplacians on 1-forms of the components.   
In fact, if $(M_1, g_1)$ and $(M_2, g_2)$ are two oriented Riemannian manifolds,   
then the Laplacian on 1-forms on the product manifold $(M_1 \times M_2,\, g_1\times g_2)$ is given by
$$\lap_1\otimes 1 + 1\otimes \lap_1 + \lap_0\otimes 1  + 1 \otimes \lap_0 + 2d_0\otimes d_0^{\ast} + 2d_0^{\ast}\otimes d_0 ,$$
where $\lap_0$ and $\lap_1$ are the Laplacians on functions and 1-forms for the corresponding manifolds.

Using the above proposition and Theorem \ref{2nd_scalar_thm}, we obtain the Ricci density in the following 
theorem. 

\begin{theorem}
The Ricci density $\ricden$ of $\nctorus$ equipped with the metric \eqref{metric_2} is given by
\begin{equation*}
\begin{aligned}
\pi^{\frac32}\ricden 
=&-\left(\tilde{K}_{22}^{\nabla}(\delta_2^2(h))+2\tilde{W}_{22}^{\nabla}(\delta_2(h)^2) + k^{-2} \tilde{K}_3^{\nabla}(\delta_3^2(h))+k^{-2}\tilde{H}_3^{\nabla}(\delta_3(h)^2)\right) E_{11}\\
&-\left(\tilde{K}_{11}^{\nabla}(\delta_1^2(h))+2\tilde{W}_{11}^{\nabla}(\delta_1(h)^2)+ k^{-2} \tilde{K}_3^{\nabla}(\delta_3^2(h))+k^{-2}\tilde{H}_3^{\nabla}(\delta_3(h)^2)\right)  E_{22}\\
&- k^{-2}\tilde{H}_4^{\nabla}(\delta_3(h)^2)  E_{33}\\
&-\sum k^{-c(i,j)}\left(\tilde{K}_{ij}^{\nabla}(\delta_i\delta_j(h))+S_{ij}^{\nabla}([\delta_i(h),\delta_j(h)])+\tilde{W}_{ij}^{\nabla}(\{\delta_i(h),\delta_j(h)\})\right)  E_{ij}.
\end{aligned}
\end{equation*}
where $\tilde{K}_{ij}$ (resp. $\tilde{W}_{ij}$ and $\tilde{H}_{ij}$) is different from  ${K}_{ij}$ (resp. $\tilde{W}_{ij}$ and ${H}_{ij}$) only in their diagonal entries. 
The new functions are given by 
$$\tilde{K}_{22}=\tilde{K}_{11}=K_{11}-K_1=\frac{-1+e^s+se^{s/2}}{4s(1+e^{s/2})^2},\qquad \tilde{K}_{33}=K_{33}-K_2=\frac{1}{4e^{s/2}},$$
$$\tilde{K}_{3}=K_{3}-K_2=\frac{-1+e^s+se^{s/2}}{4se^{s/2}(1+e^{s/2})^2},\qquad \tilde{W}_{11}=\tilde{W}_{22}=W_{11}-\frac12H_1,$$
and $\tilde{H}_3=H_3-H_2$,  $\tilde{H}_4=H_4-H_2$, and $\tilde{W}_{33}=W_{33}$.
\qed
\end{theorem}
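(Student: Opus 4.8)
The plan is to read the Ricci density straight off the defining identity $\ricden=\scalar\otimes\mathrm{I}_3-a_2(\tilde{\lap}_{1,h})$ from Definition \ref{ncricci}, feeding in the scalar curvature of Theorem \ref{2nd_scalar_thm} for the first summand and the preceding Proposition for the second, and then collecting coefficient functions entry by entry in $M_3(\mathbb{C})$. Because we are computing the Ricci \emph{functional}, Remark \ref{thediffofcoeff} permits working throughout with the anti-unitarily equivalent operators $\tilde{\lap}_{0,h}$ and $\tilde{\lap}_{1,h}$ in place of $\lap_{0,h}$ and $\lap_{1,h}$; this is the only conceptual point, and everything that remains is algebraic bookkeeping. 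A useful preliminary remark is that $\scalar$ is scalar-valued, so $\scalar\otimes\mathrm{I}_3$ contributes only on the diagonal, placing the same element $\scalar$ in each of $E_{11},E_{22},E_{33}$.

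First I would dispose of the off-diagonal entries. For $i\neq j$ the scalar term contributes nothing, so $\pi^{3/2}\ricden|_{E_{ij}}=-k^{-c(i,j)}\big(K_{ij}^{\nabla}(\delta_i\delta_j(h))+S_{ij}^{\nabla}([\delta_i(h),\delta_j(h)])+W_{ij}^{\nabla}(\{\delta_i(h),\delta_j(h)\})\big)$ directly, which is precisely the off-diagonal part of the claimed sum once one records that $\tilde{K}_{ij}=K_{ij}$ and $\tilde{W}_{ij}=W_{ij}$ away from the diagonal. This both fixes the off-diagonal tilde functions and accounts for all commutator terms, since $[\delta_i(h),\delta_i(h)]=0$ kills $S_{ii}$ on the diagonal.

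The diagonal entries are the heart of the matter, and each receives contributions both from an explicit line of the Proposition and from the $i=j$ term of its sum (with $\{\delta_i(h),\delta_i(h)\}=2\delta_i(h)^2$). For $E_{33}$ I would subtract the third explicit line together with the $(3,3)$ sum term from $\scalar$; the key observation is that the pieces $K_1^{\nabla}(\delta_1^2(h)+\delta_2^2(h))$ and $H_1^{\nabla}(\delta_1(h)^2+\delta_2(h)^2)$ occur identically in $\scalar$ and in the $E_{33}$ line of $a_2(\tilde{\lap}_{1,h})$ and therefore cancel, leaving only $\delta_3$-terms carrying a uniform $k^{-2}$ prefactor; what survives is $-k^{-2}\big(\tilde{K}_{33}^{\nabla}(\delta_3^2(h))+\tilde{H}_4^{\nabla}(\delta_3(h)^2)+2\tilde{W}_{33}^{\nabla}(\delta_3(h)^2)\big)$ with $\tilde{K}_{33}=K_{33}-K_2$, $\tilde{H}_4=H_4-H_2$, $\tilde{W}_{33}=W_{33}$. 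For $E_{11}$ (and symmetrically $E_{22}$) I would combine the first explicit line with the $(1,1)$ sum term and subtract from $\scalar|_{E_{11}}$; the differences that emerge are exactly $\tilde{K}_{11}=K_{11}-K_1$, $\tilde{K}_{22}=K_{22}-K_1$, $\tilde{W}_{11}=W_{11}-\tfrac12 H_1$, $\tilde{K}_3=K_3-K_2$, and $\tilde{H}_3=H_3-H_2$, which is how the tilde functions in the statement are defined.

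The step I expect to be the main obstacle is bookkeeping rather than analysis: one must keep straight that every diagonal slot is fed twice, and one must verify that the $k^{-2}$ prefactors of the $\delta_3$-terms coming from the scalar curvature (through $K_2,H_2$) line up with the $k^{-c(i,j)}$ prefactors of the corresponding terms of $a_2(\tilde{\lap}_{1,h})$, using $c(3,3)=2$ and $c(i,3)=1$. Once the prefactors are reconciled and the $\delta_1,\delta_2$ contributions on $E_{33}$ are seen to cancel, the seven tilde functions are forced; substituting the explicit forms of $K_{11},K_{33},K_3,W_{11}$ from the Proposition and of $K_1,K_2,H_1,H_2$ from Theorem \ref{2nd_scalar_thm} and simplifying (for instance via $e^s-1=(e^{s/2}-1)(e^{s/2}+1)$) then collapses the differences into the closed forms $\tilde{K}_{11}=\tilde{K}_{22}=\frac{-1+e^s+se^{s/2}}{4s(1+e^{s/2})^2}$, $\tilde{K}_{33}=\frac{1}{4e^{s/2}}$, and $\tilde{K}_3=\frac{-1+e^s+se^{s/2}}{4se^{s/2}(1+e^{s/2})^2}$ recorded in the statement.
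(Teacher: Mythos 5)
Your proposal is correct and is exactly the paper's (largely implicit) argument: the theorem follows by substituting Theorem \ref{2nd_scalar_thm} and the preceding proposition into $\ricden=\scalar\otimes\mathrm{I}_3-a_2(\tilde{\lap}_{1,h})$, noting that $\scalar\otimes\mathrm{I}_3$ feeds only the diagonal, that each diagonal slot receives both an explicit line and the $i=j$ term of the sum (with $S_{ii}$ killed by $[\delta_i(h),\delta_i(h)]=0$ and $\{\delta_i(h),\delta_i(h)\}=2\delta_i(h)^2$), and that the $K_1,H_1$ pieces cancel on $E_{33}$. The resulting differences $K_{11}-K_1$, $K_{33}-K_2$, $K_3-K_2$, $W_{11}-\tfrac12H_1$, $H_3-H_2$, $H_4-H_2$ simplify to the stated closed forms exactly as you describe.
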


\begin{figure}[h]
\centering
\includegraphics[scale=0.52]{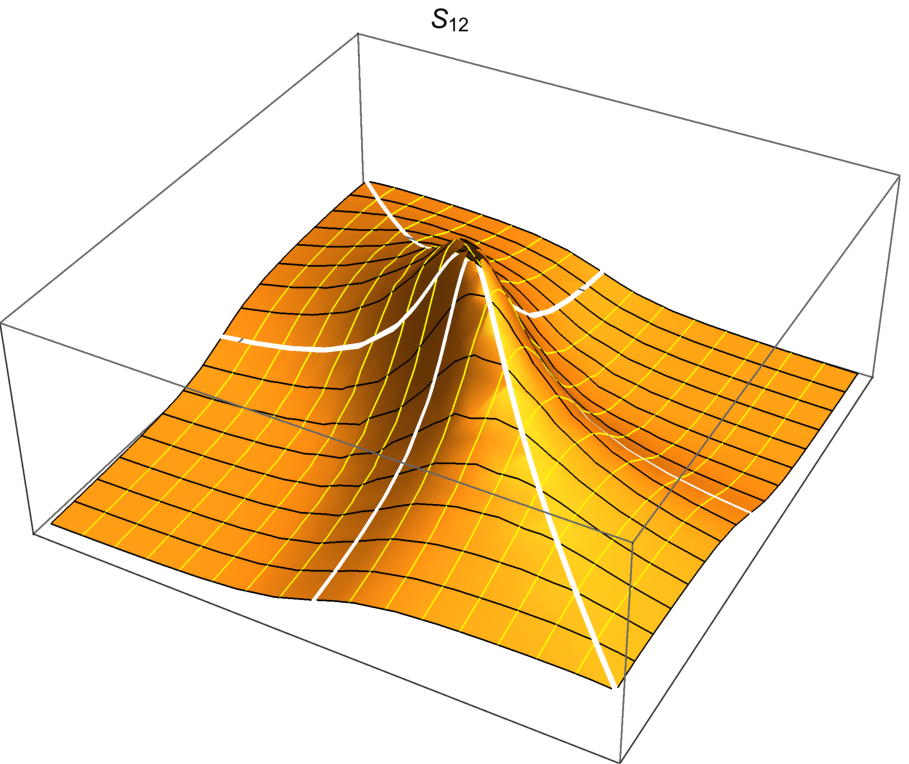}
\includegraphics[scale=0.52]{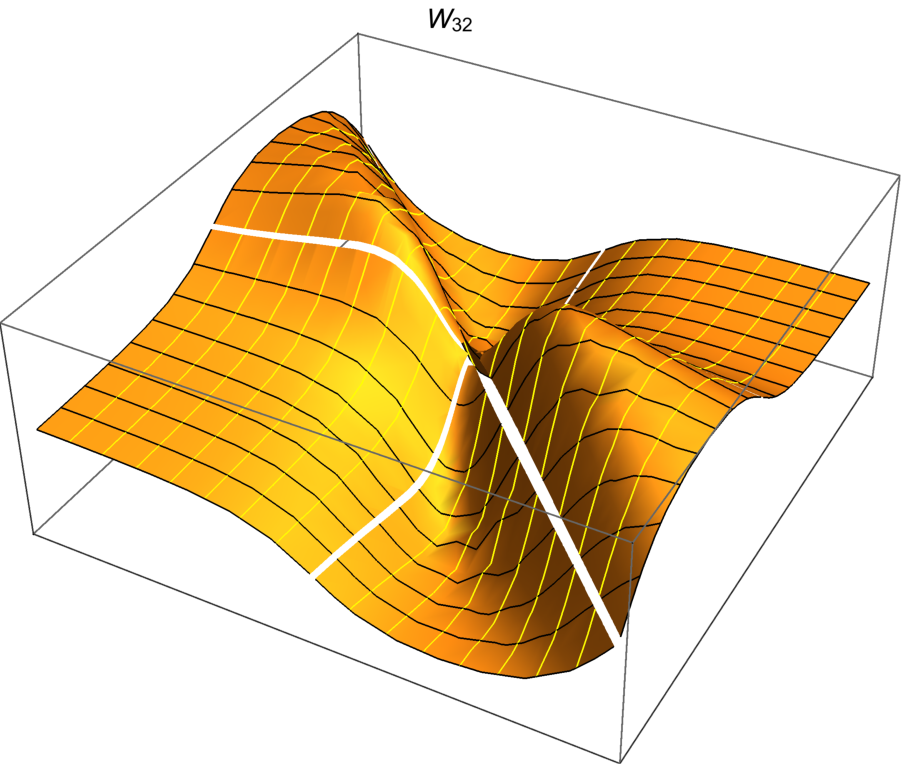}
\includegraphics[scale=0.52]{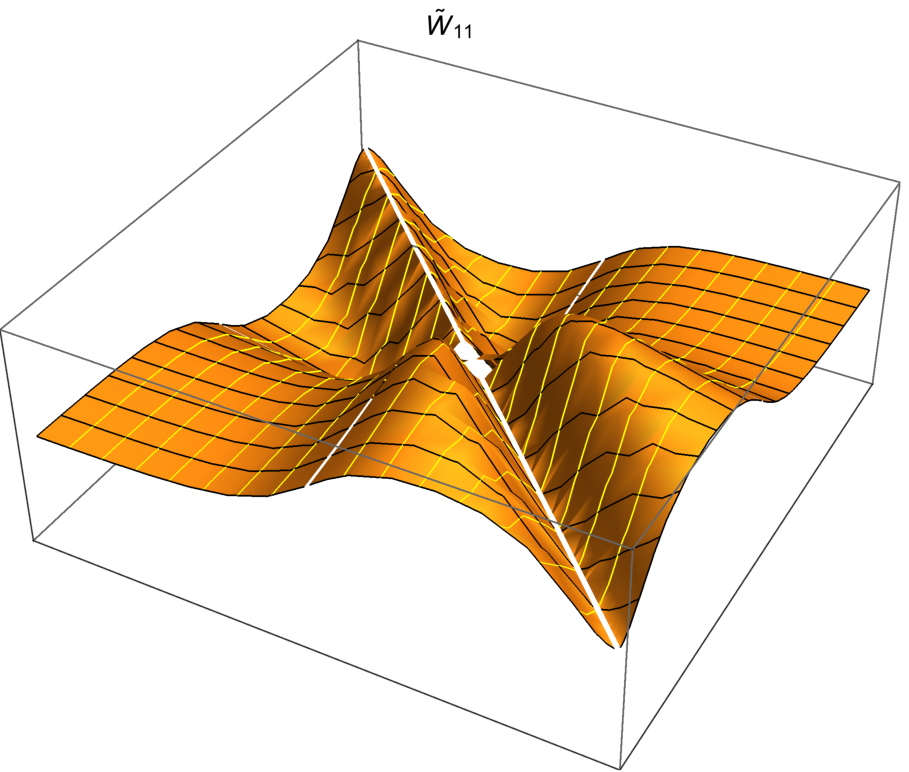}
\caption{The graph of functions $S_{12}$, $W_{32}$ and $\tilde{W}_{11}$.}
\end{figure}

The classical limit of the Ricci density can be obtained by letting  $s, t\to 0.$
First note that  in the commutative case the terms involving functions $S_{ij}$ disappear because they act on the commutator $[\delta_i(h),\delta_j(h)]$ which is zero.
On the other hand, functions $W_{ij}$ are anti-symmetric in their variables; $W_{ij}(s,t)=-W_{ij}(t,s)$.
Hence, the terms involving them will vanish too.
Moreover, since $\lim\limits_{(s,t)\to(0,0)} H_1(s,t)=0$, we have $\lim\limits_{(s,t)\to(0,0)} \tilde{W}_{ij}(s,t)=0.$
The limit of the other terms are given by
\begin{equation*}
\lim_{s\rightarrow 0}{\bf \tilde{K}}(s)=\begin{pmatrix}
\frac18 & 0 & \frac18\\
0 & \frac18 & \frac18\\
\frac18 &\frac18 & \frac14
\end{pmatrix},\qquad \lim_{s\rightarrow 0}\tilde{K}_3(s)=\frac18,
\end{equation*}  
and also
$$\lim_{(s,t)\rightarrow (0,0)}\tilde{H}_{3}(s,t)=-\frac14,\qquad \lim_{(s,t)\rightarrow (0,0)}\tilde{H}_{4}(s,t)=-\frac14.$$
Thus when $\theta\to 0$, the Ricci density $\ricden$ approaches to
\begin{equation*}
\begin{aligned}
&\ricden_{0} =\frac1{8\pi^{\frac32}} \times\\
&\begin{pmatrix}
e^{-2h}(2 \delta _3(h)^2-\delta _3^2(h)) -\sum\limits_{i=1}^2\delta _i^2(h) & 0 & -e^{-h}\delta _1\delta _3(h) \\
 0 & \hspace*{-1cm} e^{-2h}(2 \delta _3(h)^2-\delta _3^2(h)) -\sum\limits_{i=1}^2\delta _i^2(h) &   -e^{-h}\delta _2\delta _3(h)\\
-e^{-h}\delta _1\delta _3(h) & -e^{-h}\delta _2\delta _3(h) & \hspace*{-0.4cm} 2e^{-2h}(\delta _3(h)^2-\delta _3^2(h))
\end{pmatrix},
\end{aligned}
\end{equation*}
while the Ricci density in the classical case is given by
\begin{equation*}
\ricden_{com} =
\begin{pmatrix}
 e^{2h}(h_{11}+h_{22})+h_{33}-2 (h_3)^2& 0 & h_{13} \\
 0 & \hspace*{-0.5cm} e^{2h}(h_{11}+h_{22})+h_{33}-2 (h_3)^2 &  h_{23}\\
e^{2h} h_{13} & e^{2h} h_{23} & \hspace*{-0.3cm} 2h_{33}-2(h_3)^2\\
\end{pmatrix}.
\end{equation*}
The apparent discrepancy between the limit case $\ricden_{0}$ and the commutative formula
$\ricden_{com}$ is due to our convention for the  Ricci functional, and as mentioned in Remark \ref{thediffofcoeff} 
we have the relation 
$$(R_{1,k}J)\ricden_{0}(R_{1,k}J)^{\ast}=R_{1,k}J\ricden_{0}J R_{1,k^{-1}} =\frac1{8\pi^{\frac32}}\ricden_{com}e^{-2h}.$$

\begin{appendices}
\section{Computations}
In this section we give some details of the computation of  the scalar curvature for the non-conformal metric. 
The full details  can be found  in the Mathematica file accompanying this paper.

The computation starts from the formula for $b_2$ given by \eqref{bjs}.
We first plug in formula $b_1$ and write $b_2(\xi,\lambda)$  in terms of $b_1$ and the homogeneous parts of the symbol $a_2$, $a_1$ and $a_0$:
\begin{equation*}
\begin{aligned}
b_2(\xi,\lambda)=&-b_0 a_0b_0
-b_1 a_1b_0
-\partial_1 (b_0)\delta _1 (a_1 )b_0
-\partial_2 (b_0)\delta _2 (a_1)b_0
-\partial_3 b_0\delta _3(a_1)b_0\\
&-\partial_1 (b_1)\delta _1 (a_2 )b_0
-\partial_2 (b_1) \delta _2(a_2)b_0
-\partial_3 (b_1) \delta _3(a_2)b_0\\
&-\frac{1}{2} \partial_1^2 (b_0) \delta^2 (a_2 )b_0
-\frac{1}{2} \partial_2^2(b_0)\delta _2^2(a_2)b_0
-\frac{1}{2} \partial_3^2(b_0) \delta _3^2(a_2)b_0\\
&-\partial_2 \partial_3 (b_0) \delta _3 \delta _2 (a_2 ) b_0
-\partial_1\partial_2 (b_0) \delta _2 \delta _1 (a_2 )b_0
-\partial_1\partial_3 (b_0) \delta _1 \delta _3(a_2 )b_0.
\end{aligned}
\end{equation*}
The next step is to plug  $a_j$'s from Lemma \ref{symboloflap0noncon} into the above formula.
Note that the derivatives of $b_0$ can be written  as  
$$\partial_1 (b_0)=-2\xi_1 k^2 b_0^2,\quad \partial_2 (b_0)=-2\xi_2 k^2 b_0^2,\quad \partial_3 (b_0)=-2\xi_3 b_0^2.$$
The complete outcome is long and involves 465 terms.  Here we only display the result for a  sample term $\partial_3(b_0)\delta_3(a_2)b_0$ below.
\begin{align*}
&\partial_3(b_0)\delta_3(a_2)b_0\\
&=-4 \xi _1^5\xi _3  k^2 b_0^2 \delta _1 (k^2 )b_0^2 \delta _3 (k^2 ) b_0 
-8 \xi _1^5\xi _3  k^2 b_0^3 \delta _1 (k^2 ) b_0 \delta _3 (k^2 ) b_0 
-4  \xi _1^4\xi _3^2  b_0^2 \delta _3 (k^2 ) b_0^2 \delta _3 (k^2 ) b_0\\
&
-8  \xi _1^4\xi _3^2  b_0^3 \delta _3 (k^2 ) b_0 \delta _3 (k^2 ) b_0
+2 \xi _1^4 b_0^2 \delta _3 (k^2 ) b_0 \delta _3 (k^2 ) b_0 
-4  \xi _1^4\xi _2 \xi _3  k^2 b_0^2 \delta _2 (k^2 ) b_0^2 \delta _3 (k^2 ) b_0\\
&
-8 \xi _1^4\xi _2 \xi _3  k^2 b_0^3 \delta _2 (k^2 ) b_0 \delta _3 (k^2 ) b_0 
-8 \xi _1^3\xi _2^2 \xi _3 k^2 b_0^2 \delta _1 (k^2 ) b_0^2 \delta _3 (k^2 ) b_0 
+4 \xi _1^3\xi _3 b_0 k \delta _1(k) b_0^2 \delta _3 (k^2 ) b_0 \\
&
-16 \xi _1^3\xi _2^2 \xi _3 k^2 b_0^3 \delta _1 (k^2 ) b_0 \delta _3 (k^2 ) b_0 
+4  \xi _1^3\xi _3 b_0^2 k \delta _1(k) b_0 \delta _3 (k^2 ) b_0
+4  \xi _1^2\xi _2^2 b_0^2 \delta _3 (k^2 ) b_0 \delta _3 (k^2 ) b_0\\
&
-8 \xi _1^2\xi _2^2 \xi _3^2 b_0^2 \delta _3 (k^2 ) b_0^2 \delta _3 (k^2 ) b_0 
-16 \xi _1^2\xi _2^2 \xi _3^2 b_0^3 \delta _3 (k^2 ) b_0 \delta _3 (k^2 ) b_0 
+2  \xi _1^2\xi _3^2 b_0  k^{-1}  \delta _3(k) b_0^2 \delta _3 (k^2 ) b_0\\
&
-2 \xi _1^2\xi _3^2 b_0 \delta _3(k)  k^{-1}  b_0^2 \delta _3 (k^2 ) b_0 
+2 \xi _1^2\xi _3^2 b_0^2  k^{-1}  \delta _3(k) b_0 \delta _3 (k^2 ) b_0 
-2 \xi _1^2\xi _3^2 b_0^2 \delta _3(k)  k^{-1}  b_0 \delta _3 (k^2 ) b_0 \\
&
- \xi _1^2b_0  k^{-1}  \delta _3(k) b_0 \delta _3 (k^2 ) b_0 
+\xi _1^2b_0 \delta _3(k)  k^{-1}  b_0 \delta _3 (k^2 ) b_0 
-8 \xi _1^2\xi _2^3 \xi _3 k^2 b_0^2 \delta _2 (k^2 ) b_0^2 \delta _3 (k^2 ) b_0 \\
&
-16   \xi _1^2\xi _2^3 \xi _3k^2 b_0^3 \delta _2 (k^2 ) b_0 \delta _3 (k^2 ) b_0
+4 \xi _1^2\xi _2 \xi _3 b_0 k \delta _2(k) b_0^2 \delta _3 (k^2 ) b_0 
+4  \xi _1^2\xi _2 \xi _3 b_0^2 k \delta _2(k) b_0 \delta _3 (k^2 ) b_0\\
&
-4 \xi _1\xi _2^4 \xi _3 k^2 b_0^2 \delta _1 (k^2 ) b_0^2 \delta _3 (k^2 ) b_0 
-8 \xi _1\xi _2^4 \xi _3 k^2 b_0^3 \delta _1 (k^2 ) b_0 \delta _3 (k^2 ) b_0 
+4 \xi _1\xi _2^2 \xi _3 b_0 k \delta _1(k) b_0^2 \delta _3 (k^2 ) b_0 \\
&
+4  \xi _1\xi _2^2 \xi _3b_0^2 k \delta _1(k) b_0 \delta _3 (k^2 ) b_0 
+2 \xi _2^4b_0^2 \delta _3 (k^2 ) b_0 \delta _3 (k^2 ) b_0 
-  \xi _2^2b_0  k^{-1}  \delta _3(k) b_0 \delta _3 (k^2 ) b_0\\
&
+\xi _2^2b_0 \delta _3(k)  k^{-1}  b_0 \delta _3 (k^2 ) b_0 
-4 \xi _2^4 \xi _3^2b_0^2 \delta _3 (k^2 ) b_0^2 \delta _3 (k^2 ) b_0 
-8  \xi _2^4 \xi _3^2b_0^3 \delta _3 (k^2 ) b_0 \delta _3 (k^2 ) b_0\\
&
+2  \xi _2^2 \xi _3^2b_0  k^{-1}  \delta _3(k) b_0^2 \delta _3 (k^2 ) b_0
-2 \xi _2^2 \xi _3^2b_0 \delta _3(k)  k^{-1}  b_0^2 \delta _3 (k^2 ) b_0 
+2\xi _2^2 \xi _3^2 b_0^2  k^{-1}  \delta _3(k) b_0 \delta _3 (k^2 ) b_0 \\
&
-2 \xi _2^2 \xi _3^2b_0^2 \delta _3(k)  k^{-1}  b_0 \delta _3 (k^2 ) b_0 
-4 \xi _2^5 \xi _3k^2 b_0^2 \delta _2 (k^2 ) b_0^2 \delta _3 (k^2 ) b_0 
-8 \xi _2^5 \xi _3k^2 b_0^3 \delta _2 (k^2 ) b_0 \delta _3 (k^2 ) b_0 \\
&
+4 \xi _2^3 \xi _3b_0 k \delta _2(k) b_0^2 \delta _3 (k^2 ) b_0 
+4 \xi _2^3 \xi _3 b_0^2 k \delta _2(k) b_0 \delta _3 (k^2 ) b_0.
\end{align*}

Then  we apply the substitution given in \eqref{substitution} and integrate with respect to $\eta$ and $\theta$.
The result then is 
\begin{align*}
&\frac1{\pi^2}\int_{-\infty}^{+\infty}\int_0^{2\pi} b_2(u,\eta,\theta,-1)\frac{1+\eta^2}{2}d \theta d\eta\\
&=2 u^3k^2 b_0^2 \delta _1(k) k^3 b_0^2 k\delta _1(k) b_0 
+2 u^3k^2 b_0^2 \delta _1(k) k^3 b_0^2 \delta _1(k) k b_0 
+2 u^3k^2 b_0^2\delta _2(k) k^3 b_0^2 k \delta _2(k)b_0 \\
&
+2 u^3 k^2 b_0^2 \delta _2(k) k^3 b_0^2 \delta _2(k) k b_0 
+4 u^3 k^4 b_0^3 k \delta _1(k) b_0 k \delta _1(k) b_0 
+4 u^3 k^4 b_0^3 k \delta _1(k) b_0 \delta _1(k) k b_0 \\
&
+4 u^3k^4 b_0^3 k \delta _2(k) b_0 k \delta _2(k) b_0 
+4u^3k^4 b_0^3 k \delta _2(k) b_0 \delta _2(k) k b_0 
+4u^3k^4 b_0^3 \delta _1(k) k b_0 k \delta _1(k) b_0 \\
&
+4 u^3k^4 b_0^3 \delta _1(k) k b_0 \delta _1(k) k b_0 
+4 u^3k^4 b_0^3 \delta _2(k) k b_0 k \delta _2(k) b_0 
+4 u^3k^4 b_0^3 \delta _2(k) k b_0 \delta _2(k) k b_0 \\
&
+2 u^3k^2b_0^2 k \delta _1(k) k^2 b_0^2 k \delta _1(k) b_0 
+2 u^3k^2 b_0^2k\delta _1(k)k^2 b_0^2 \delta _1(k) k b_0 
-2 u^2k^4 b_0^3 k \delta _1\left(\delta _1(k)\right) b_0 \\
&
+2 u^3k^2 b_0^2k\delta_2(k) k^2 b_0^2 \delta _2(k) k b_0 
+2 u^3k^2 b_0^2 k\delta _2(k)k^2 b_0^2k \delta _2(k) b_0 
-2 u^2 k^4 b_0^3 k \delta _2\left(\delta _2(k)\right) b_0 \\
&
-4u^2k^4 b_0^3 \delta _1(k) \delta _1(k) b_0 
-2 u^2k^4 b_0^3 \delta _1\left(\delta _1(k)\right) k b_0
-4u^2k^4 b_0^3 \delta _2(k) \delta _2(k) b_0 \\
&
-2u^2k^4 b_0^3 \delta_2\left(\delta _2(k)\right) k b_0 
-2u^2b_0^2 k \delta _3(k) b_0 k \delta _3(k) b_0 
-2u^2b_0^2 k \delta _3(k) b_0 \delta _3(k) k b_0 \\
&
+2u^2b_0^2 k \delta _3(k) b_0^2 k \delta _3(k) b_0 
+2u^2b_0^2 k \delta _3(k) b_0^2 \delta _3(k) k b_0 
-2u^2b_0^2 \delta _3(k) k b_0 k \delta _3(k) b_0 \\
&
-2 u^2b_0^2 \delta _3(k) k b_0 \delta _3(k) k b_0
+2u^2b_0^2 \delta _3(k) k b_0^2 k \delta _3(k) b_0 
+2 u^2b_0^2 \delta _3(k) k b_0^2 \delta _3(k) k b_0 \\
&
+4 u^2 b_0^3 k \delta _3(k) b_0 k \delta _3(k) b_0
+4 u^2b_0^3 k \delta _3(k) b_0 \delta _3(k) k b_0
+4 u^2b_0^3 \delta _3(k) k b_0 k \delta _3(k) b_0\\
&
+4 u^2b_0^3 \delta _3(k) k b_0 \delta _3(k) k b_0
-8 u^2k^2 b_0^2 k \delta _1(k) b_0 k \delta _1(k) b_0
-6 u^2k^2 b_0^2 k \delta _1(k) b_0 \delta _1(k) k b_0 \\
&
-8u^2k^2 b_0^2 k \delta _2(k) b_0 k \delta _2(k) b_0 
-6u^2k^2 b_0^2 k \delta _2(k) b_0 \delta _2(k) k b_0
-6 u^2k^2 b_0^2 \delta _1(k) k b_0 k \delta _1(k) b_0\\
&
-4 u^2 k^2 b_0^2 \delta _1(k) k b_0 \delta _1(k) k b_0
-6 u^2 k^2 b_0^2 \delta _2(k) k b_0 k \delta _2(k) b_0
-4 u^2 k^2 b_0^2 \delta _2(k) k b_0 \delta _2(k) k b_0\\
&
-2  u^2b_0 k \delta _1(k) k^2 b_0^2 k \delta _1(k) b_0 
-2 u^2  b_0 k \delta _1(k) k^2 b_0^2 \delta _1(k) k b_0 
-2 u^2  b_0 k \delta _2(k) k^2 b_0^2 k \delta _2(k) b_0 \\
&
-2  u^2 b_0 k \delta _2(k) k^2 b_0^2 \delta _2(k) k b_0 
+ u b_0^2 k \delta _3\left(\delta _3(k)\right) b_0 
+2 u  b_0^2 \delta _3(k) \delta _3(k) b_0 \\
&
+ u b_0^2 \delta _3\left(\delta _3(k)\right) k b_0 
-2  u  b_0^3 k \delta _3\left(\delta _3(k)\right) b_0
-4 u  b_0^3 \delta _3(k) \delta _3(k) b_0 \\
&
-2 u b_0^3 \delta _3\left(\delta _3(k)\right) k b_0 
+3 u k^2 b_0^2 k \delta _1\left(\delta _1(k)\right) b_0
+3 u k^2 b_0^2 k \delta _2\left(\delta _2(k)\right) b_0\\
&
+4 u  k^2 b_0^2 \delta _1(k) \delta _1(k) b_0 
+ u k^2 b_0^2 \delta _1\left(\delta _1(k)\right) k b_0 
+4 u k^2 b_0^2 \delta _2(k) \delta _2(k) b_0\\
&
+  u k^2 b_0^2 \delta _2\left(\delta _2(k)\right) k b_0
+ u b_0  k^{-1} \delta _3(k) b_0 k \delta _3(k) b_0 
+ u b_0  k^{-1} \delta _3(k) b_0 \delta _3(k) k b_0 \\
&
- u b_0  k^{-1} \delta _3(k) b_0^2 k \delta _3(k) b_0 
- u b_0  k^{-1} \delta _3(k) b_0^2 \delta _3(k) k b_0 
+4 u  b_0 k \delta _1(k) b_0 k \delta _1(k) b_0 \\
&
+2  u b_0 k \delta _1(k) b_0 \delta _1(k) k b_0 
+4  u b_0 k \delta _2(k) b_0 k \delta _2(k) b_0 
+2  u b_0 k \delta _2(k) b_0 \delta _2(k) k b_0 \\
&
-  u b_0 \delta _3(k)  k^{-1} b_0 k \delta _3(k) b_0 
- u b_0 \delta _3(k)  k^{-1} b_0 \delta _3(k) k b_0 
+ u b_0 \delta _3(k)  k^{-1} b_0^2 k \delta _3(k) b_0\\
&
+ u b_0 \delta _3(k)  k^{-1} b_0^2 \delta _3(k) k b_0 
- u b_0^2  k^{-1} \delta _3(k) b_0 k \delta _3(k) b_0 
- u b_0^2  k^{-1} \delta _3(k) b_0 \delta _3(k) k b_0 \\
&
- u b_0^2 k \delta _3(k) b_0  k^{-1} \delta _3(k) b_0 
+ u b_0^2 k \delta _3(k) b_0 \delta _3(k)  k^{-1} b_0 
+ u  b_0^2 \delta _3(k)  k^{-1} b_0 k \delta _3(k) b_0\\
&
+ u b_0^2 \delta _3(k)  k^{-1} b_0 \delta _3(k) k b_0 
- u b_0^2 \delta _3(k) k b_0  k^{-1} \delta _3(k) b_0 
+ u b_0^2 \delta _3(k) k b_0 \delta _3(k)  k^{-1} b_0 \\
&
-  b_0  k^{-1} \delta _3\left(\delta _3(k)\right) b_0
-  b_0 k \delta _1\left(\delta _1(k)\right) b_0
-  b_0 k \delta _2\left(\delta _2(k)\right) b_0
+  b_0^2  k^{-1} \delta _3\left(\delta _3(k)\right) b_0\\
&
-  b_0^2 \delta _3\left(\delta _3(k)\right)  k^{-1} b_0
+  b_0 \delta _3(k) k^{-2} \delta _3(k) b_0
+  b_0  k^{-1} \delta _3(k)  k^{-1} \delta _3(k) b_0\\
&
-  b_0^2  k^{-1} \delta _3(k)  k^{-1} \delta _3(k) b_0
+  b_0^2 \delta _3(k)  k^{-1} \delta _3(k)  k^{-1} b_0
+\frac12 b_0k^{-1}\delta_3(k) b_0 k^{-1} \delta_3(k) b_0\\
&
-\frac12 b_0  k^{-1}\delta_3(k)b_0 \delta_3(k)k^{-1} b_0
-\frac12 b_0\delta_3(k)k^{-1} b_0k^{-1} \delta_3(k) b_0
+\frac12 b_0 \delta_3(k)k^{-1} b_0 \delta_3(k)k^{-1} b_0.
\end{align*}

To perform the $u$ integration, we apply  Corollary \ref{rearrangenoncon} where the functions $F^{[v]}_{m_0,\cdots,m_p}$ show up in the result.
The $\rho$ terms appearing in the outcome expression include $\delta_j(k)$ and $\delta^2_j(k)$ multiplied by a power of $k$. 
We use the following identities to bring all these $\rho$'s into the form  $k^{-1}\delta_j$ or $k^{-1}\delta_j^2(k)$.
$$F(\Delta)(\rho_1\rho_2)=F(\Delta_{(1)}\Delta_{(2)})(\rho_1\cdot \rho_2),\qquad F(\Delta)(k^m\rho k^n)=k^{m+n}\Delta^{\frac{n}2} F(\Delta)(\rho),$$
$$F(\Delta_{(1)},\Delta_{(2)})(k^l\rho_1\cdot k^m \rho_2 k^n)=k^{l+m+n}\Delta^{\frac{m+n}2}_{(1)}\Delta^{\frac{n}2}_{(2)}F(\Delta_{(1)}\Delta_{(2)})(\rho_1\cdot \rho_2).$$
These identities are consequences of the fact that $\Delta$ is a $C^\ast$-algebra automorphism which commutes with $k$ and also $xk=k\Delta^{\frac12}(x)$.
Applying the aforementioned identities, the integral of $b_2$, up to the total factor $\pi^2$, is equal to     
\begin{align*}
&\Big((3  + \Delta^{\frac12}) F^{[2]}_{2,1} (\Delta ) \left(k^{-1}\delta_1^2(k) \right)
-  F^{[2]}_{1,1} (\Delta) \left(k^{-1}\delta_1^2(k) \right)
-2 (1+ \Delta^{\frac12}) F^{[2]}_{3,1} (\Delta) \left(k^{-1}\delta_1^2(k) \right)\Big)\\
&+\Big(2   \Delta _{(1)}( \Delta _{(2)}^{\frac12} +2) F^{[2]}_{1,1,1} (\Delta _{(1)},\Delta _{(2)} ) \left(k^{-1}\delta _1(k)\cdot k^{-1}\delta _1(k) \right)\\
&\quad\,\, -2  \Delta _{(1)}^2( \Delta _{(2)}^{\frac12} +1 ) F^{[2]}_{1,2,1} (\Delta _{(1)},\Delta _{(2)} ) \left(k^{-1}\delta _1(k)\cdot k^{-1}\delta _1(k) \right)\\
&\quad\,\, -2\Delta _{(1)} (3    \Delta _{(2)}^{\frac12}+2   \Delta _{(1)}^{\frac12} \Delta _{(2)}^{\frac12}  +3   \Delta _{(1)}^{\frac12} +4 ) 
F^{[2]}_{2,1,1} \left(\Delta _{(1)},\Delta _{(2)} ) (k^{-1}\delta _1(k)\cdot k^{-1}\delta _1(k) \right)\\
&\quad\,\, +2 \Delta _{(1)}^2  ( 1+ \Delta _{(1)}^{\frac12}) ( 1+ \Delta _{(2)}^{\frac12}) 
F^{[2]}_{2,2,1} (\Delta _{(1)},\Delta _{(2)} ) \left(k^{-1}\delta _1(k)\cdot k^{-1}\delta _1(k) \right)\\
&\quad\,\, +4 \Delta _{(1)}   ( 1+ \Delta _{(1)}^{\frac12}) ( 1+ \Delta _{(2)}^{\frac12})  
F^{[2]}_{3,1,1} (\Delta _{(1)},\Delta _{(2)} ) \left(k^{-1}\delta _1(k)\cdot k^{-1}\delta _1(k) \right)\\
&\quad\,\, +4   \Delta _{(1)}^{\frac12} F^{[2]}_{2,0,1} \left(\Delta _{(1)},\Delta _{(2)} ) (k^{-1}\delta _1(k)\cdot k^{-1}\delta _1(k) \right)\\
&\quad\,\, -4   \Delta _{(1)}^{\frac12} F^{[2]}_{3,0,1} \left(\Delta _{(1)},\Delta _{(2)} ) (k^{-1}\delta _1(k)\cdot k^{-1}\delta _1(k) \right)\Big)\\
&+\Big((3 + \Delta^{\frac12}) F^{[2]}_{2,1} (\Delta) \left(k^{-1}\delta_2^2(k) \right)
-  F^{[2]}_{1,1} (\Delta ) \left(k^{-1}\delta_2^2(k) \right)
-2 (1+   \Delta^{\frac12}) F^{[2]}_{3,1} (\Delta) \left(k^{-1}\delta_2^2(k) \right)\Big)\\
&+\Big(2   \Delta _{(1)}( \Delta _{(2)}^{\frac12} +2) F^{[2]}_{1,1,1} (\Delta _{(1)},\Delta _{(2)} ) \left(k^{-1}\delta _2(k)\cdot k^{-1}\delta _2(k) \right)\\
&\quad\,\, -2  \Delta _{(1)}^2( \Delta _{(2)}^{\frac12} +1 ) F^{[2]}_{1,2,1} (\Delta _{(1)},\Delta _{(2)} ) \left(k^{-1}\delta _2(k)\cdot k^{-1}\delta _2(k) \right)\\
&\quad\,\, -2\Delta _{(1)} (3    \Delta _{(2)}^{\frac12}+2   \Delta _{(1)}^{\frac12} \Delta _{(2)}^{\frac12}  +3   \Delta _{(1)}^{\frac12} +4 ) 
F^{[2]}_{2,1,1} \left(\Delta _{(1)},\Delta _{(2)} ) (k^{-1}\delta _2(k)\cdot k^{-1}\delta _2(k) \right)\\
&\quad\,\, +2 \Delta _{(1)}^2 ( 1+ \Delta _{(1)}^{\frac12}) ( 1+ \Delta _{(2)}^{\frac12}) 
F^{[2]}_{2,2,1} (\Delta _{(1)},\Delta _{(2)} ) \left(k^{-1}\delta _2(k)\cdot k^{-1}\delta _2(k) \right)\\
&\quad\,\, +4 \Delta _{(1)}  ( 1+ \Delta _{(1)}^{\frac12}) ( 1+ \Delta _{(2)}^{\frac12}) 
F^{[2]}_{3,1,1} (\Delta _{(1)},\Delta _{(2)} ) \left(k^{-1}\delta _2(k)\cdot k^{-1}\delta _2(k) \right)\\
&\quad\,\, +4   \Delta _{(1)}^{\frac12} F^{[2]}_{2,0,1} \left(\Delta _{(1)},\Delta _{(2)} ) (k^{-1}\delta _2(k)\cdot k^{-1}\delta _2(k) \right)\\
&\quad\,\, -4   \Delta _{(1)}^{\frac12} F^{[2]}_{3,0,1} \left(\Delta _{(1)},\Delta _{(2)} ) (k^{-1}\delta _2(k)\cdot k^{-1}\delta _2(k) \right)\Big)\\
&+k^{-2}\Big(- F^{[2]}_{1,1} (\Delta) \left(k^{-1}\delta^2 _3(k)\right)
+(1+ \Delta^{\frac12}) F^{[2]}_{2,1} (\Delta) \left(k^{-1}\delta^2_3(k)\right)\\
&\qquad\qquad 
-2(1+ \Delta^{\frac12}) F^{[3]}_{3,1} (\Delta) \left(k^{-1}\delta^2 _3(k)\right)
 +(1-\Delta^{-\frac12})  F^{[3]}_{2,1} (\Delta) \left(k^{-1}\delta _3 ^2(k)\right)\Big)
\\
&+k^{-2}\Big(
(\Delta _{(1)} - \Delta _{(1)}^{\frac12})(\Delta _{(2)}^{\frac12} +1) F^{[2]}_{1,1,1} (\Delta _{(1)},\Delta _{(2)} ) \left(k^{-1}\delta _3(k)\cdot k^{-1}\delta _3(k)\right )\\
&\qquad\quad-2   \Delta _{(1)} (\Delta _{(1)}^{\frac12}+1)( \Delta _{(2)}^{\frac12} +1) F^{[2]}_{2,1,1} (\Delta _{(1)},\Delta _{(2)} ) \left(k^{-1}\delta _3(k)\cdot k^{-1}\delta _3(k)\right )\\
&\qquad\quad+2   \Delta _{(1)}^{\frac12} F^{[2]}_{2,0,1} (\Delta _{(1)},\Delta _{(2)} ) \left(k^{-1}\delta _3(k)\cdot k^{-1}\delta _3(k)\right )\\
&\qquad\quad+(1+\Delta_1^{-\frac12}) F^{[2]}_{1,0,1} (\Delta _{(1)},\Delta _{(2)} ) \left(k^{-1}\delta _3(k)\cdot k^{-1}\delta _3(k)\right )\\
&\qquad\quad+ (\Delta _{(1)}^{\frac12}- \Delta _{(1)}) (\Delta _{(2)}^{\frac12} +1) F^{[3]}_{1,2,1} (\Delta _{(1)},\Delta _{(2)} ) \left(k^{-1}\delta _3(k)\cdot k^{-1}\delta _3(k)\right )\\
&\qquad\quad- 
(\Delta _{(1)}^{\frac12}-\Delta_{(2)}^{-\frac12})(1+\Delta _{(1)}^{\frac12}-\Delta_{(2)}^{\frac12}+\Delta _{(1)}^{\frac12}\Delta_{(2)}^{\frac12}) F^{[3]}_{2,1,1} (\Delta _{(1)},\Delta _{(2)} ) \left(k^{-1}\delta _3(k)\cdot k^{-1}\delta _3(k)\right )\\
&\qquad\quad+2   \Delta _{(1)} (\Delta _{(1)}^{\frac12}+1)( \Delta _{(2)}^{\frac12} +1) F^{[3]}_{2,2,1} (\Delta _{(1)},\Delta _{(2)} ) \left(k^{-1}\delta _3(k)\cdot k^{-1}\delta _3(k)\right )\\
&\qquad\quad+4   \Delta _{(1)} (\Delta _{(1)}^{\frac12}+1)( \Delta _{(2)}^{\frac12} +1) F^{[3]}_{3,1,1} (\Delta _{(1)},\Delta _{(2)} ) \left(k^{-1}\delta _3(k)\cdot k^{-1}\delta _3(k)\right )\\
&\qquad\quad+\frac{1}{2} (\Delta_{(1)}^{-\frac12}-1)(\Delta_2^{-\frac12}-1)  F^{[3]}_{1,1,1} (\Delta _{(1)},\Delta _{(2)} ) \left(k^{-1}\delta _3(k)\cdot k^{-1}\delta _3(k)\right )\\
&\qquad\quad- 4  \Delta _{(1)}^{\frac12} F^{[3]}_{3,0,1} (\Delta _{(1)},\Delta _{(2)} ) \left(k^{-1}\delta _3(k)\cdot k^{-1}\delta _3(k)\right )\\
&\qquad\quad -(1- \Delta_1^{-\frac12}\Delta_2^{-\frac12})  F^{[3]}_{2,0,1} (\Delta _{(1)},\Delta _{(2)} ) \left(k^{-1}\delta _3(k)\cdot k^{-1}\delta _3(k)\right )\Big).
\end{align*}

In the above formula, we grouped the terms with the same sequence of $\rho_j$'s together.
The terms which has  $k^{-1}\delta_1^2(k)$ have exactly the exactly the  same functions  as  the  term $k^{-1}\delta_2^2(k)$,  and it reads
$$\pi^2\Big( (3  + \Delta^{\frac12}) F^{[2]}_{2,1} (\Delta ) -  F^{[2]}_{1,1} (\Delta) -2 (1+ \Delta^{\frac12}) F^{[2]}_{3,1} (\Delta)\Big).$$
If we substitute  the  functions $F^{[v]}_{m_0,m_1}$ in the above expression, we get: 
\begin{equation*}
\psi_1(s_1):=-\frac{\pi ^2 \sqrt{s_1}(s_1\log(s_1)+\log(s_1)-2s_1+2)}{(\sqrt{s_1}-1)^3(\sqrt{s_1}+1)^2}.
\end{equation*}
The function for  $(k^{-1} \delta _1(k)\cdot k^{-1} \delta _1(k))$ is the same as the function for $(k^{-1} \delta _1(k)\cdot k^{-1} \delta _1(k))$ and it is given by  
\begin{align*}
&\phi_1(s_1,s_2)=\frac{2 \pi^2 \sqrt{s_1} \sqrt{s_2}}{ (\sqrt{s_1}-1 ) (s_1-1 )  (\sqrt{s_2}-1 )(s_2-1 )  (\sqrt{s_1 s_2}-1 )  (s_1s_2-1 )^2}\times\\
&\Big(1+s_1^{3/2}  (s_2^{5/2}-\sqrt{s_2}+2 s_2^{3/2} \log  (s_2 )+s_2^2  (\log  (s_1 s_2 )-2 )-2 s_2 \log  (s_1 s_2 )+\log  (s_1 s_2 )+2 )\\
&-s_2+s_2 s_1^{5/2}  (s_2^{3/2}  (\log  (s_1 )-1 )-s_2  (\log  (s_1 s_2 )-2 )+\log  (s_1 s_2 )-\sqrt{s_2}  (\log  (s_1 s_2 )-1 )-2 )\\
&+\log  (s_2 )+s_2 s_1^2  (s_2  (\log  (s_2 )-1 )+1 )-s_1  (s_2^2  (\log  (s_1 s_2 )-1 )-2 s_2 \log  (s_1 )+\log  (s_1 s_2 )+1 )\\
&-\sqrt{s_1}  (s_2^{3/2}  (\log  (s_1 s_2 )+1 )-s_2  (\log  (s_1 s_2 )+2 )+\log  (s_1 s_2 )-\sqrt{s_2}  (\log  (s_1 )+1 )+2 )\Big).
\end{align*}
Also, the functions for $k^{-1} \delta _3(k)k^{-1} \delta _3(k)$ and $k^{-1} \delta^2 _3(k)$  are given by 
$$\psi_2(s_1)=\frac{2 \pi ^2 \left(-s_1^2+2 s_1 \log \left(s_1\right)+1\right)}{ \sqrt{s_1} \left(s_1-1\right)^2  \log \left(s_1\right)},$$
and
\begin{align*}
\phi_2(&s_1,s_2)=\frac{2 \pi ^2}{ \sqrt{s_1s_2}  (s_1-1 )  (s_2-1 )(s_1 s_2-1 )^2 \log  (s_1 ) \log  (s_2 ) \log  (s_1 s_2 )}\\
&\Big( (s_1-1 )^2(  s_2^3 s_1+s_1s_2^2-s_2^2 +3s_2 )  \log  (s_2 ) ^2  -(s_2-1 )^2 ( 3 s_1^2s_2-  s_2 s_1+s_1+1 )  \log  (s_1 )^2   \\
&\quad +    (  s_1^2s_2^2-5s_1s_2^2+ s_1^2s_2+2s_2^2  +4s_1s_2-5s_2+s_1+1 )(s_1 s_2-1 ) \log  (s_1 ) \log  (s_2 )\\
& \quad +  2  (s_2-1 )(s_1-1 )  (s_1^2 s_2^2-1 ) \log  (\frac{s_2}{s_1} ) \Big).
\end{align*}

Finally, we would like to express the result in term of $\log k$ and $\nabla:=\log \Delta=[-2h,\cdot]$. 
To do so we first need to use the formula \eqref{translatetologk}, then replace $\Delta$ with $e^{\nabla}$.
For example, the term involving $\delta_1^2(\log(k))$ comes from  $\psi_1(\Delta)(k^{-1}\delta_1^2(k))$ and it is given by
\begin{align*}
\psi_1(\Delta)f(\Delta)(\delta_1^2(\log(k)))
&= -\frac{\pi ^2 \sqrt{\Delta}(s_1\log(\Delta)+\log(\Delta)-2\Delta+2)}{(\sqrt{\Delta}-1)^3(\sqrt{\Delta}+1)^2}\frac{2(\sqrt{\Delta}-1)}{\log(\Delta)}\left(\delta_1^2(\log(k))\right)\\
&=2\pi^2 \frac{e^{\frac{\nabla}2}(2e^\nabla-\nabla e^\nabla-2-\nabla)}{\nabla(e^\nabla-1)^2}\nabla.
\end{align*}
Multiplying the overall factor $(4\pi)^{-\frac32}$ and factoring out the powers of $\pi$, we get the function $K_1(s)$ given in Theorem \ref{ncconformalscalarcurvature}.
Similarly, other function are obtained as  
\begin{align*}
K_2(s)&=\frac1{8\pi^2}\psi_2(e^{s})f(s)\\
H_1(s,t)&=\frac1{8\pi^2}\left(\phi_1(e^s,e^t) f(e^s) f(e^t) + 2 \psi_1(e^s e^t)g(e^s,e^t)\right),\\
H_2(s,t)&=\frac1{8\pi^2}\left(\phi_2(e^s,e^t) f(e^s) f(e^t) + 2 \psi_2(e^s e^t)g(e^s,e^t)\right).
\end{align*}

\section{Functions from the  rearrangement lemma}\label{appenfunctionsfromrearrange}
In this appendix we list all the functions obtained from the rearrangement lemmas  \ref{rearrangecon} and \ref{rearrangenoncon} which are required in the computations. 
First we have the functions from the conformally flat case in section \ref{conformalmetrics}:
\begin{align*}
F_{1,1}(s_1)&={\pi }/\left({{s_1^{2/3}}+\sqrt[3]{s_1}}\right),\\
F_{2,1}(s_1)&=\pi  \left(\sqrt[3]{s_1}+2\right)/\left(2 \left(\sqrt[3]{s_1}+1\right)^2 \sqrt[3]{s_1}\right),\\
F_{3,1}(s_1)&={\pi  \left(3 s_1^{2/3}+9 \sqrt[3]{s_1}+8\right)}/\left({8 \left(\sqrt[3]{s_1}+1\right)^3 \sqrt[3]{s_1}}\right),\\
F_{1,1,1}(s_1,s_2)&={\pi  \left(\sqrt[3]{s_1} \left(\sqrt[3]{s_2}+1\right)+1\right)}/\Big({\left(\sqrt[3]{s_1}+1\right) s_1 \left(\sqrt[3]{s_2}+1\right) \sqrt[3]{s_2} \left(\sqrt[3]{s_1} \sqrt[3]{s_2}+1\right)}\Big),\\
F_{1,2,1}(s_1,s_2)&=\frac{\pi  \left(2 s_1^{2/3} \left(\sqrt[3]{s_2}+1\right)^2+\sqrt[3]{s_1} \left(\sqrt[3]{s_2}+2\right)^2+\sqrt[3]{s_2}+2\right)}{2 \left(\sqrt[3]{s_1}+1\right)^2 s_1^{5/3} \left(\sqrt[3]{s_2}+1\right)^2 \sqrt[3]{s_2} \left(\sqrt[3]{s_1} \sqrt[3]{s_2}+1\right)},\\
F_{2,1,1}(s_1,s_2)&=\frac{\pi  \left(\left(\sqrt[3]{s_1}+2\right) \sqrt[3]{s_1} \left(\sqrt[3]{s_2}+1\right) \left(\sqrt[3]{s_1} \sqrt[3]{s_2}+2\right)+2\right)}{2 \left(\sqrt[3]{s_1}+1\right)^2 s_1 \left(\sqrt[3]{s_2}+1\right) \sqrt[3]{s_2} \left(\sqrt[3]{s_1} \sqrt[3]{s_2}+1\right)^2},\\
F_{2,2,1}(s_1,s_2)&=\frac{\pi}{2 \left(\sqrt[3]{s_1}+1\right)^3 s_1^{5/3} (\sqrt[3]{s_2}+1)^2 \sqrt[3]{s_2}(\sqrt[3]{s_1} \sqrt[3]{s_2}+1)^2}\Big(\sqrt[3]{s_1} (2 s_2^{2/3}+7 \sqrt[3]{s_2}+6)\\
&\qquad \qquad +(\sqrt[3]{s_2}+1)^2 (s_1^{4/3} \sqrt[3]{s_2}+s_1^{2/3} (\sqrt[3]{s_2}+6)+s_1 (3 \sqrt[3]{s_2}+2))+\sqrt[3]{s_2}+2\Big),\\
F_{3,1,1}(s_1,s_2)&=
\frac{\pi}{8 s_1\sqrt[3]{s_2} (\sqrt[3]{s_1}+1)^3  (\sqrt[3]{s_2}+1) (\sqrt[3]{s_1 s_2}+1)^3 }\Big((9 s_1^{4/3} \sqrt[3]{s_2}+24 s_1^{2/3}) \left(\sqrt[3]{s_2}+1\right){}^2\\
&\qquad +(24 \sqrt[3]{s_1}+3 s_2^{2/3} s_1^{5/3}+27 \sqrt[3]{s_2} s_1+8 s_2^{2/3} s_1+8 s_1) \left(\sqrt[3]{s_2}+1\right)+8\Big)
\end{align*}

The list of functions required in the computations for the non-conformal metric  is the following:
\begin{align*}
F^{[2]}_{1,1}(s_1)&={\log (s_1)}/(s_1-1),\\
F^{[2]}_{2,1}(s_1)&={(s_1-\log (s_1)-1)}/{(s_1-1)^2},\\
F^{[2]}_{3,1}(s_1)&={\left((s_1-4) s_1+2 \log (s_1)+3\right)}/{(2 (s_1-1)^3)},\\
F^{[3]}_{3,1}(s_1)&={(s_1^2-2 s_1 \log (s_1)-1)}/{(2 (s_1-1)^3)},\\
F^{[3]}_{2,1}(s_1)&={(s_1 (\log (s_1)-1)+1)}/{(s_1-1)^2},\\
F^{[2]}_{1,0,1}(s_1,s_2)&={\log \left(s_1 s_2\right)}/{(s_1 s_2-1)},\\
F^{[2]}_{1,1,1}(s_1,s_2)&={\big((s_1 s_2-1) \log (s_1)-(s_1-1) \log (s_1 s_2)\big)}/\big({(s_1-1) s_1 (s_2-1) (s_1 s_2-1)}\big),\\
F^{[2]}_{2,0,1}(s_1,s_2)&=\big({s_1 s_2-\log (s_1 s_2)-1}\big)/{(s_1 s_2-1)^2},\\
F^{[2]}_{1,2,1}(s_1,s_2)&=\frac{1}{(s_1-1)^2 s_1^2 (s_2-1)^2 (s_1 s_2-1)}\Big((s_1-1)^2 \log (s_1 s_2)\\
&\qquad\qquad +(s_1 s_2-1) (s_1 (-s_2)+(s_1 (s_2-2)+1) \log (s_1)+s_1+s_2-1)\Big),\\
F^{[2]}_{2,1,1}(s_1,s_2)&=\frac{1}{(s_1-1)^2 s_1 (s_2-1) (s_1 s_2-1)^2}\Big((s_1-1)^2 \log (s_1 s_2)\\
&\qquad \qquad +(s_1 s_2-1) ((s_1-1) s_1 (s_2-1)+(1-s_1 s_2) \log (s_1))\Big),\\
F^{[2]}_{2,2,1}(s_1,s_2)&=\frac{1}{(s_1-1)^3 s_1^2 (s_2-1)^2 (s_1 s_2-1)^2}\Big(-(s_1 s_2-1) (s_1 (2 s_2-3)+1) \log (s_1))\\
& +(s_1 s_2-1) ((s_1-1) (s_2-1) ( s_1^2 (s_2-1)+s_2s_1-1)-(s_1-1)^3 \log (s_1 s_2)\Big),\\
F^{[2]}_{3,0,1}(s_1,s_2)&=\Big({(s_1 s_2-3) (s_1 s_2-1)+2 \log (s_1 s_2)}\Big)/{(2 (s_1 s_2-1)^3)},\\
F^{[2]}_{3,1,1}(s_1,s_2)&=\frac{1}{2 (s_1-1)^3 s_1 (s_2-1) (s_1 s_2-1)^3}\Big(2 (s_1 s_2-1)^3 \log (s_1)-2 (s_1-1)^3 \log (s_1 s_2)\\
&\qquad\qquad +s_1 (s_1-1) (s_2-1) (s_1 s_2-1) ((s_1-3) s_1 s_2-3 s_1+5)\Big),\\
F^{[3]}_{1,1,1}(s_1,s_2)&=\Big({(-s_1 s_2+1)\log (s_1)+(s_1 -1)s_2 \log (s_1 s_2)}\Big)/\big({(s_1-1) (s_2-1) (s_1 s_2-1)}\big),\\
F^{[3]}_{2,0,1}(s_1,s_2)&=\Big({-s_1 s_2+s_1 s_2 \log (s_1 s_2)+1}\Big)/{(s_1 s_2-1)^2},\\
F^{{[3]}}_{3,0,1}(s_1,s_2)&=\Big({s_1^2 s_2^2-2 s_1 s_2 \log (s_1 s_2)-1}\Big)/({2 (s_1 s_2-1)^3)},\\
F^{[3]}_{1,2,1}(s_1,s_2&)=\frac{(s_1 s_2-1) ((s_1-1) (s_2-1)+(s_1-s_2) \log (s_1))-(s_1-1)^2 s_2 \log (s_1 s_2)}{(s_1-1)^2 s_1 (s_2-1)^2 (s_1 s_2-1)},\\
F^{[3]}_{2,1,1}(s_1,s_2)&=\frac{(s_1 s_2-1)^2 \log (s_1)-(s_1-1) ((s_2-1) (s_1 s_2-1)+(s_1-1) s_2 \log (s_1 s_2))}{(s_1-1)^2 (s_2-1) (s_1 s_2-1)^2},\\
F^{[3]}_{2,2,1}(s_1,s_2)&=\frac{1}{(s_1-1 )^3 s_1  (s_2-1 )^2  (s_1 s_2-1 )^2}
\Big(
s_1^2+s_2^3 s_1^3  (\log(s_1)\hspace{-1mm}-2 )+s_2^2 s_1^3  (3-2 \log  (s_1 ) )\\
&+s_2 s_1^3  (\log  (s_1 s_2 )-1 )+s_2^3 s_1^2  (\log  (s_1 )+2 )-s_2^2 s_1^2  (2 \log  (s_1 ) )\\
&+s_2 s_1^2  (4 \log  (s_1 )-3 \log  (s_1 s_2 )-3 )+s_2^2 s_1  (-2 \log  (s_1 )-3 )-s_1  (2 \log  (s_1 ) )\\
&+s_2 s_1  (\log  (s_1 )+3 \log  (s_1 s_2 )+3 )+s_2  (\log  (s_1 )-\log  (s_1 s_2 )+1 )-1
\Big)\\
F^{[3]}_{3,1,1}(s_1,s_2)&=\frac{1}{2 (s_1-1)^3 (s_2-1) (s_1 s_2-1)^3}\Big(2 (s_1-1)^3 s_2 \log (s_1 s_2)-2 (s_1 s_2-1)^3 \log (s_1)\\
&\qquad +(s_1-1) (s_2-1) (s_1 s_2-1) ((s_1+1) s_1 s_2+s_1-3)\Big).
\end{align*}

\end{appendices}

\addcontentsline{toc}{section}{References}

\def\polhk#1{\setbox0=\hbox{#1}{\ooalign{\hidewidth
  \lower1.5ex\hbox{`}\hidewidth\crcr\unhbox0}}}
\providecommand{\bysame}{\leavevmode\hbox to3em{\hrulefill}\thinspace}
\providecommand{\MR}{\relax\ifhmode\unskip\space\fi MR }
\providecommand{\MRhref}[2]{%
  \href{http://www.ams.org/mathscinet-getitem?mr=#1}{#2}
}
\providecommand{\href}[2]{#2}

\Addresses
\end{document}